\newtheorem{theorem}{Theorem}[section]
\newtheorem{conjecture}[theorem]{Conjecture}
\newtheorem{proposition}[theorem]{Proposition}
\newtheorem{lemma}[theorem]{Lemma}
\theoremstyle{definition}
\newtheorem{definition}[theorem]{Definition}
\theoremstyle{remark}
\newtheorem{remark}[theorem]{Remark}
\newtheorem{example}[theorem]{Example}
\newcommand\cA{\mathcal{A}}
\newcommand{\cB}{\mathcal{B}}
\newcommand\cC{\mathcal{C}}
\newcommand{\J}{\mathcal{J}}
\newcommand\M{\mathcal{M}}
\newcommand\cS{\mathcal{S}}
\newcommand{\N}{\mathbb{N}}
\newcommand{\R}{\mathbb{R}}
\renewcommand{\H}{\mathbb{H}}
\newcommand{\C}{\mathbb{C}}
\newcommand{\Z}{\mathbb{Z}}
\newcommand{\Q}{\mathbb{Q}}
\newcommand{\RP}{\mathbb{RP}}
\newcommand{\CP}{\mathbb{CP}}
\newcommand{\bn}{\mathbf{n}}
\newcommand{\fJ}{\mathfrak{J}}
\newcommand{\on}{\operatorname}
\renewcommand\Re{\on{Re}}
\renewcommand\Im{\on{Im}}
\newcommand\pt{\on{pt}}
\newcommand\loc{{\on{loc}}}
\newcommand{\re}{\on{re}}
\newcommand{\im}{\on{im}}
\newcommand{\Fuk}{\on{Fuk}}
\newcommand{\comp}{C^2}
\newcommand{\Jac}{\on{Jac}}
\newcommand{\Id}{\on{Id}}
\newcommand{\compnt}{{\on{comp}}}
\newcommand{\marked}{{\on{marked}}}
\newcommand{\seam}{{\on{seam}}}
\newcommand{\node}{{\on{node}}}
\newcommand{\ext}{{\on{ext}}}
\newcommand\qu{/\kern-.7ex/} 
\newcommand\lqu{\backslash \kern-.7ex \backslash}
\def\tint{{\textstyle\int}}
\def\half{\mbox{\fontsize{10}{10}\selectfont $\frac 12$}}
\newcommand{\ol}{\overline}
\newcommand{\ul}{\underline}
\newcommand{\sr}{\stackrel}
\newcommand{\wh}{\widehat}
\newcommand{\wt}{\widetilde}
\newcommand{\lan}{\langle}
\newcommand{\ran}{\rangle}
\newcommand{\nm}[1]{\left|#1\right|}
\newcommand{\const}{\mathrm{const}}
\def\d{\delta}
\newcommand{\eps}{\epsilon}
\def\i{\iota}
\def\n{\nu}
\def\om{\omega}
\def\Si{\Sigma}
\def\rT{{\rm T}}
\def\rd{{\rm d}}
\renewcommand{\d}{{\rm d}}
\def\lra{\longrightarrow}
\newcommand{\less}{{\smallsetminus}}
\newenvironment{itemlist}
   {
      \begin{list}
         {$\bullet$}
         {
            \setlength{\leftmargin}{1em}
            \setlength{\itemsep}{.5ex}
         }
   }
   {
      \end{list}
   }
\newcounter{qcounter}
\newenvironment{enumlist}
   {
      \begin{list}
         {\bf\Alph{qcounter})} 
         {
         \usecounter{qcounter}
                     \setlength{\itemsep}{.5ex}
            \setlength{\leftmargin}{1em}
         }
   }
   {
      \end{list}
   }
\newcommand\quotient[2]{
        \mathchoice
            {
                \text{\raise1ex\hbox{$#1$}\Big/\lower1ex\hbox{$#2$}}%
            }
            {
                #1\,/\,#2
            }
            {
                #1\,/\,#2
            }
            {
                #1\,/\,#2
            }
    }
\newcommand\quoti[2]{
                \text{\raise1ex\hbox{$#1$}/\lower1ex\hbox{$\scriptstyle#2$}}
  }
\newcommand\quot[2]{
                \text{\raise1ex\hbox{$#1\!\!$}/\lower1ex\hbox{$\!\scriptstyle#2$}}
  }
\newcommand\quo[2]{
                \text{\raise.8ex\hbox{$\scriptstyle#1\!$}/\lower.8ex\hbox{$\!\scriptstyle#2$}}
  }
\newcommand\qq[2]{
                \text{\raise.8ex\hbox{$#1\!$}/\lower.8ex\hbox{$#2$}}
}
\begin{document}

\title{Gromov compactness for squiggly strip shrinking in pseudoholomorphic quilts}

\author{Nathaniel Bottman}
\address{School of Mathematics, Institute for Advanced Study,
1 Einstein Dr, Princeton, NJ 08540}
\email{\href{mailto:nbottman@math.ias.edu}{nbottman@math.ias.edu}}

\author{Katrin Wehrheim}
\address{Department of Mathematics, University of California,
Berkeley, CA 94720}
\email{\href{mailto:katrin@math.berkeley.edu}{katrin@math.berkeley.edu}}

\begin{abstract}  
We establish a Gromov compactness theorem for strip shrinking in pseudoholomorphic quilts when composition of Lagrangian correspondences is immersed.
In particular, we show that figure eight bubbling occurs in the limit, argue that this is a codimension-$0$ effect, and predict its algebraic consequences -- geometric composition extends to a curved $A_\infty$-bifunctor, in particular the associated Floer complexes are isomorphic after a figure eight correction of the bounding cochain.
An appendix with Felix Schm\"{a}schke provides examples of nontrivial figure eight bubbles.
\end{abstract} 

\maketitle

\section{Introduction}

We consider compact Lagrangian correspondences ${L_{01} \subset M_0^- \times M_1}$ and $L_{12} \subset M_1^- \times M_2$, where $M_\ell=(M_\ell,\om_{M_\ell})$ are symplectic manifolds that are either compact or satisfy appropriate boundedness conditions (see Remark~\ref{rmk:noncompact}), and where \(M_\ell^- \coloneqq (M_\ell, -\om_{M_\ell})\).
The {\bf geometric composition} of such Lagrangian correspondences is 
$L_{01} \circ L_{12} \coloneqq \pi_{02}( L_{01} \times_{M_1} L_{12}) $, the image under the projection $ \pi_{02}\colon M_0^- \times
M_1 \times M_1^- \times M_2 \to M_0^- \times M_2 $ of the fiber product
\begin{align*}
L_{01} \times_{M_1} L_{12} \coloneqq (L_{01} \times L_{12})
 \cap (M_0^- \times \Delta_1 \times M_2) .
\end{align*}
Here \(\Delta_1 \subset M_1 \times M_1^-\) denotes the diagonal.
If \(L_{01}\times L_{12}$ intersects $M_0^- \times \Delta_1 \times M_2\) transversely then \( \pi_{02}\colon L_{01} \times_{M_1} L_{12} \to M_0^-\times M_2\) is a Lagrangian immersion, in which case we call \(L_{01}\circ L_{12}\) an {\bf immersed composition}.
In the case of {\bf embedded composition}, where the projection is injective and hence a Lagrangian embedding, some strict monotonicity and Maslov index assumptions allowed Wehrheim--Woodward \cite{isom} to establish an isomorphism of quilted Floer cohomologies
\begin{equation} \label{eq:HFiso} 
HF(\ldots, L_{01},L_{12}, \ldots ) \cong HF(\ldots , L_{01} \circ L_{12}, \ldots) .
\end{equation}
The analytic core of the proof was a {\bf{strip-shrinking degeneration}}, in which a triple of pseudoholomorphic strips coupled by Lagrangian seam conditions degenerates to a pair of strips, via the width of the middle strip shrinking to zero\footnote{
Earlier (ca.\ 2004), Matthias Schwarz had proposed a continuation map approach to \eqref{eq:HFiso}, via jumping boundary conditions \cite{ab:jumping}.
This strategy was recast as a Morse--Bott setup by \cite{ll}.}.
The monotonicity and embeddedness assumptions allowed for an implicit exclusion of all bubbling, which Wehrheim--Woodward \cite{isom} conjectured to include a novel {\bf{figure eight bubbling}} that (unlike disk or sphere bubbling) could be an algebraic obstruction to \eqref{eq:HFiso}.

The aim of this paper is to achieve a geometric understanding of figure eight bubbling toward encoding its effect algebraically.
In particular, in \S\ref{ss:algebra1} we propose a direct generalization of \eqref{eq:HFiso} to the nonmonotone case as an isomorphism of quilted Floer homologies with twisted differentials,\footnote{
Twisted differentials are obtained by adding to the Floer differential (that may not square to zero) further contributions arising from an $A_\infty$-structure applied to repetitions of a fixed chain or cochain. 
(Throughout this paper, we will not distinguish between chains and cochains as this only affects the grading, which we do not discuss.)
}
\begin{align*}
HF\bigl( \ldots , (L_{01},b_{01}), (L_{12},b_{12}), \ldots \bigr)
\; \simeq\; 
HF\bigl( \ldots , (L_{01}\circ L_{12}, 8(b_{01},b_{12}) ), \ldots \bigr),
\end{align*}
in which the bounding cochain $8(b_{01},b_{12})$ for the composed Lagrangian is obtained from moduli spaces of figure eight bubble trees with inputs $b_{01}$ and $b_{12}$.
Here even $8(0,0)$ is a generally nonzero count of figure eight bubbles.
Our proposal extends to geometric compositions $L_{01} \circ L_{12}$ that may only be 
\textbf{cleanly immersed}, i.e.\ immersed in such a way that the local branches of \(L_{01} \circ L_{12}\) intersect cleanly, and leads us to formulate Conjecture~\ref{conj:bifunctor}:
Geometric composition defines an {\bf \(\mathbf{A_\infty}\)-bifunctor} \(\comp\colon (\Fuk(M_1^- \times M_2), \Fuk(M_0^- \times M_1)) \to \Fuk(M_0^- \times M_2)\) between Fukaya categories of immersed Lagrangians with clean self-intersections\footnote{
See Remark~\ref{rem:immfuk} for a brief discussion of immersed Fukaya categories.
For now, note that while others -- chiefly \cite{a:immersed, aj} -- have proposed setups for Floer theory of immersed Lagrangians, the notion of the figure eight bubble giving rise to an \(A_\infty\)-bifunctor is entirely new and also yields a new notion of Floer theory for immersed compositions.},
by adding input marked points to the seams of a figure eight bubble and viewing the quilt singularity as an output marked point.
In the special case $M_0=\pt$, such a bifunctor in particular induces an \(A_\infty\)-functor
\(\Fuk(M_1^- \times M_2)  \to A_\infty{\rm Fun}\bigl(\Fuk(M_1) , \Fuk(M_2)\bigr)\) that would provide a more general version of the functor developed in \cite{mww} between extended Fukaya categories of monotone Lagrangians.
More generally, our proposal naturally extends to a symplectic $A_\infty$ 2-category, as we describe in Remark~\ref{rmk:2cat}.

\begin{figure}
\centering
\def\svgwidth{\columnwidth}
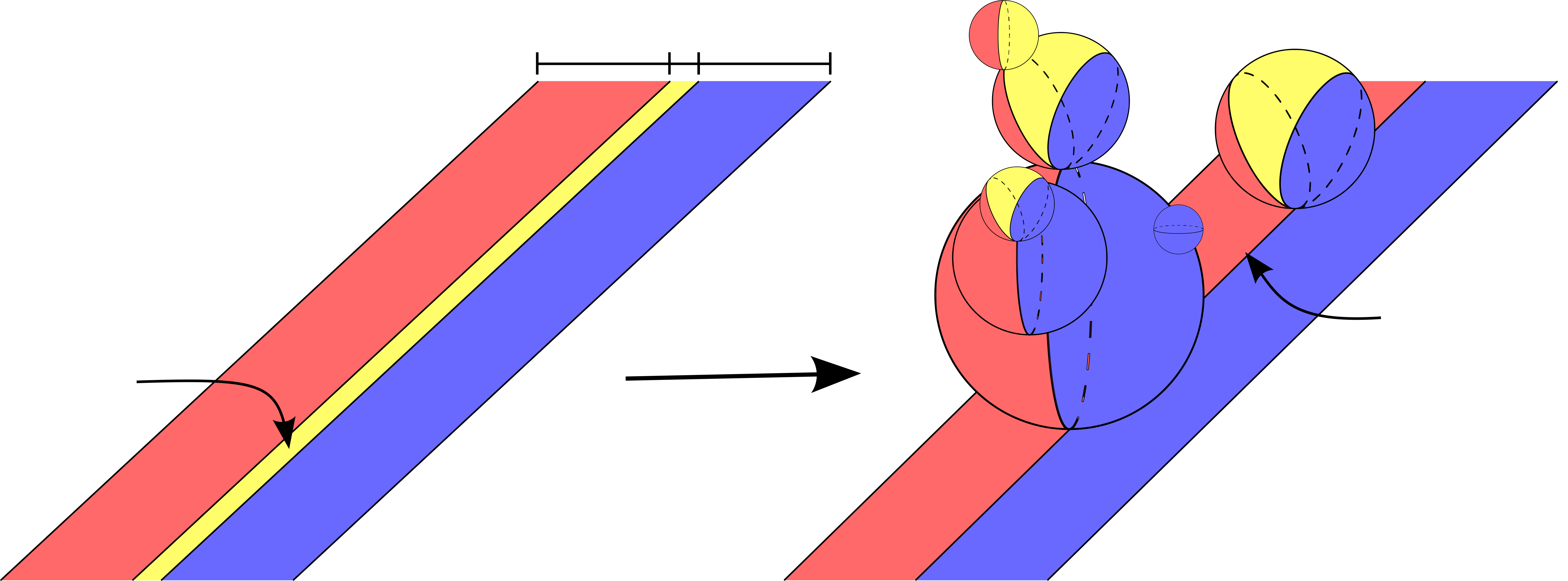
\caption{
A prototypical configuration of bubbles arising in the limit of strip shrinking.
The colors red/yellow/blue of patches indicate the target manifolds $M_0$/$M_1$/$M_2$ of the corresponding pseudoholomorphic maps.
More generally, we allow the boundary seams of the yellow strip to be squiggly as specified in \S\ref{sec:rescale}.
\label{fig:tree}}
\end{figure}

Toward these proposals we study families of pseudoholomorphic quilts in which the width of a strip or annulus shrinks to zero, where it is replaced by the composition of the two correspondences $L_{01},L_{12}$ associated to the adjacent seams (see Figure~\ref{fig:tree}).
The main results of this paper, described in more detail in \S\ref{ss:overview}, are a Gromov compactness theorem for this strip-shrinking, a precise description of the singular quilt bubbling phenomena -- figure eights and squashed eights -- and a lower bound on their bubbling energy.
The proofs rely on a collection of width-independent elliptic estimates established in \cite{b:singularity}.  In turn, our Gromov compactness theorem is a key ingredient for \cite[Thm.~2.2]{b:singularity}, a removal of singularity for figure eight bubbles.
In the latter application to a singular quilt, the shrinking strips arising from reparametrization to cylindrical coordinates near the singularity have nonstandard complex structure:
The quilted surface is locally biholomorphic to a part of the complex plane with nonstraight seams \(\Im z = \pm \arcsin(\tfrac 1 2\Re z)\).
Since essentially only \(\cC^0\)-convergence and \(\cC^k\)-bounds on the width functions \(f^\nu(s) \coloneqq \arcsin(\tfrac 1 2s)|_{[-\nu,-\nu+1]}\) for \(\nu \in \N\) affect the analysis, we formulate our results for general ``squiggly strip shrinking''.

\medskip
\noindent
{\bf Philosophical Remark:}
{\it 
In the early days of pseudoholomorphic quilts, the relevance of figure eight bubbles was doubted.
We hope that this paper puts those doubts to rest.
As we summarize in \S\ref{ss:algconsequences} and explain in \S\ref{s:propaganda}, figure eight bubbling cannot be \emph{a priori} excluded for dimension reasons, and will contribute to the algebra.
So even if e.g.\ the isomorphism \eqref{eq:HFiso} of Floer homology under geometric composition was to be proven with methods other than strip shrinking, one must in general expect figure eight type obstructions.
When all symplectic manifolds and Lagrangians are monotone and all geometric compositions are embedded, then for energy reasons strip shrinking will not lead to figure eight bubbling; however, in only slightly more general situations, this exclusion argument may fail: as is explained in \S\ref{ss:HF},
\cite{w:chekanov}
gives an example in which all symplectic manifolds and Lagrangians are monotone and geometric composition is multiply covered, and figure eight bubbles contribute nontrivially to the algebra resulting from strip shrinking.
However, figure eight bubbles should be viewed as a tool rather than an inconvenience: For instance, they provide a bounding cochain for \(L_{01} \circ L_{12}\) so that \eqref{eq:HFiso} continues to hold in more general situations than \cite{isom} considered.
More generally, fully embracing figure eight bubbles will yield a natural 2-categorical structure on the collection of all compact symplectic manifolds, which will unify and extend a wide variety of currently-known algebraic structures.

While this paper only provides substantial evidence for these algebraic results (or ``proofs up to technical details" depending on ones standards of rigour), it does demonstrate in full detail that figure eight bubbling is in fact analytically manageable:
\S\ref{sec:bubbles} introduces the novel bubble types -- figure eight bubbles and squashed eight bubbles -- and establishes lower bounds as well as topological controls on their energy.
A full removal of singularities for the new bubbles is established in \cite{b:singularity}.
Section~\ref{sec:rescale} gives rigorous definitions of ``squiggly strip shrinking'' and shows how the full diversity of bubble types appears in the Gromov compactification for squiggly strip shrinking. 
Moreover, Appendix~\ref{app:ex} in collaboration with Felix Schm\"{a}schke provides the first nontrivial example of a general figure eight bubble.
}

\subsection{Analytic Results} \label{ss:overview}

\noindent 
Given a sequence of pseudoholomorphic spheres of bounded energy, ``Gromov compactness'' may be approached in two ways.
{\bf Hard rescaling} refers to the process of finding finitely many points of energy concentration, and rescaling at each of these points proportionally to the rate of gradient blowup.
The result is a nontrivial bubble at each energy concentration point.
{\bf Soft rescaling} is a more refined version of this process, where the rescalings are chosen in such a way that the limiting object is a tree of pseudoholomorphic spheres which captures all energy of the limiting subsequence.
The ingredients in the soft rescaling version of Gromov compactness are: \begin{itemize}
\item[(1)] The hard rescaling version of Gromov compactness (\cite[\S4.2]{ms:jh});

\item[(2)] A removal of singularity result (\cite[\S4]{ms:jh});

\item[(3)] An exponential decay result for long cylinders of small energy (\cite[\S4.7]{ms:jh});

\item[(4)] A significant amount of bookkeeping of trees (\cite[\S5]{ms:jh}).
\end{itemize}
The contribution of \S\ref{sec:rescale} (summarized below) of this paper is the analogue of (1) for a sequence of squiggly strip quilts with width function converging obediently to zero.
The analogue of (2) is proven in \cite{b:singularity}, and the techniques developed in that paper can also be used to establish the analogue of (3).
While the analytic pieces are now in place to prove full Gromov compactness in our setting, we defer the analogue of (4) to \cite{bw:bigkahuna} because the proof will be significantly more efficient in the polyfold setting of that paper.

The compactness analysis in \S\ref{sec:rescale} will for ease of notation be performed in the special case of quilted squares with seam conditions in \(L_{01}, L_{12}\) and the width of the strip mapping to $M_1$ converging to zero.
However, it generalizes directly to the following result for strip or annulus shrinking in pseudoholomorphic quilts.
(For an introduction to quilts see \cite{quilts}.)
\medskip

\noindent{\bf Theorem~\ref{thm:rescale} (Gromov compactness for strip-shrinking, hard rescaling version):}
Let $\ul{Q}^\nu$ be a sequence of quilted surfaces containing a patch \(Q_1^\nu\) diffeomorphic to an annulus or strip, equipped with complex structures in which the width of \(Q_1^\nu\) tends to zero as \(\nu \to \infty\).
(For allowable squiggliness -- i.e.\ variation of width -- see Definition~\ref{def:obedience}.)
Label the patches of $\ul{Q}^\nu$ with a tuple $\ul{M}$ of closed symplectic manifolds (or noncompact ones without boundary which come with a priori \(\cC^0\)-bounds as discussed in Remark~\ref{rmk:noncompact}), let $M_1$ and $M_0,M_2$ be the labels of $Q_1^\nu$ and the adjacent patches, and fix compatible almost complex structures over each patch. 
Fix compact Lagrangian seam conditions for each seam of $\ul{Q}^\nu$ so that the Lagrangian correspondences $L_{01}, L_{12}$ associated to the seams bordering $Q_1^\nu$ have immersed composition \(L_{01} \circ L_{12}\).
Now suppose that $(\ul{v}^\nu)_{\nu\in\N}\colon \ul{Q}^\nu \to \ul{M}$ is a sequence of pseudoholomorphic quilts of bounded energy with the given seam conditions.

Then there is a subsequence (still denoted $(\ul{v}^\nu)_{\nu\in\N}$) that converges up to bubbling to a punctured pseudoholomorphic quilt $\ul{v}^\infty\colon \ul{Q}^\infty  \less Z \to (\ul{M}\less M_1)$.
Here $\ul{Q}^\infty$ is the quilted surface obtained as the limit of $\bigl(\ul{Q}^\nu\bigr)$
by replacing $Q_1^\nu$ with a seam labeled by \(L_{01} \circ L_{12}\), $Z$ is a finite set of bubbling points, $\ul{v}^\infty$ satisfies seam conditions in the fixed Lagrangian correspondences and for the new seam in \(L_{01} \circ L_{12}\) (in the generalized sense of \eqref{gen bc}), and convergence holds in the following sense:

\begin{itemlist}
\item
The energy densities $|\d\ul{v}^{\nu}|^2$ are uniformly bounded on every compact subset of $\ul{Q}^\infty \less Z$, and at each point in $Z$ there is energy concentration of at least $\hbar>0$, given by the minimal bubbling energy from Definition~\ref{def:bubble energy}.
\item
The quilt maps $\ul{v}^{\nu}|_{\ul{Q}^\nu \less (Q^\nu_1\cup Z)}$ on the complement of $Z$ in the patches other than $Q^\nu_1$ converge with all derivatives on every compact set to $\ul{v}^\infty$.
\item
At least one type of bubble forms at each point $z\in Z$ in the following sense: There is a sequence of (tuples of) maps obtained by rescaling the maps defined on the various patches near $z$, which converges $\cC^\infty_\loc$ to one of the following: 
\begin{itemize}  
\item[--]
a nonconstant, finite-energy pseudoholomorphic map $\R^2 \to M_\ell$ to one of the symplectic manifolds in $\ul{M}$ (this can be completed to a nonconstant pseudoholomorphic sphere in $M_\ell$);
\item[--] 
a nonconstant, finite-energy pseudoholomorphic map $\H \to M_k^-\times M_\ell$ to a product of symplectic manifolds associated to the patches on either side of a seam in $\ul{Q}^\nu$, that satisfies the corresponding Lagrangian seam condition (this can be extended to a nonconstant pseudoholomorphic disk in $M_k^-\times M_\ell$, in particular including the cases of disks with boundary on $L_{01}\subset M_0^-\times M_1$ or $L_{12}\subset M_1^-\times M_2$);
\item[--]
a nonconstant, finite-energy figure eight bubble in the sense of \eqref{eq:8} below;
\item[--]
a nonconstant, finite-energy squashed eight bubble in the sense of \eqref{gen bc} below, 
with generalized seam conditions in $L_{01}\circ L_{12}$.
\end{itemize}
\end{itemlist}

\medskip
\noindent
{\bf Singular quilt bubbling phenomena:}
Beyond the standard bubbling phenomena (holomorphic spheres and disks) our Gromov compactification for quilts with strip-shrinking involves two new types of bubbles:
A {\bf figure eight bubble} is a tuple of finite-energy pseudoholomorphic maps
\begin{equation} \label{eq:8}
w_0\colon \R\times(-\infty,-\tfrac 1 2]\to M_0, \qquad
w_1\colon \R\times[-\tfrac 1 2,\tfrac 1 2]\to M_1, \qquad
w_2\colon \R\times[\tfrac 1 2, \infty)\to M_2
\end{equation}
satisfying the seam conditions
\begin{equation*}
(w_0(s,-\tfrac 1 2),w_1(s,-\tfrac 1 2))\in L_{01} , \quad
(w_1(s,\tfrac 1 2),w_2(s,\tfrac 1 2))\in L_{12} 
\qquad\forall \: s\in \R ,
\end{equation*}
while a {\bf squashed eight bubble} is a triple of finite-energy pseudoholomorphic maps \begin{align*}
w_0\colon \R\times(-\infty,0]\to M_0, \qquad w_1\colon \R \to M_1, \qquad w_2\colon \R\times[0, \infty)\to M_2
\end{align*}
satisfying the seam condition
\begin{equation}\label{gen bc}
(w_0(s,0), w_1(s), w_1(s), w_2(s,0)) \in L_{01} \times_{M_1} L_{12} \quad \forall \: s \in \R.
\end{equation}

\begin{remark}
Both of these bubbles are of singular quilt type in the sense that we cannot generally expect a smooth extension to a quilted sphere.
For the squashed eight bubble this results from the boundary condition $L_{01} \circ L_{12}$ generally just being a Lagrangian immersion.
For the figure eight bubble this is due to the way in which the two seams intersect at infinity: After stereographic compactification to a quilted sphere, they touch tangentially rather than intersect transversely, which would allow a description of the singularity in terms of striplike ends. 
Nevertheless, if the composition $L_{01}\circ L_{12}$ is cleanly immersed, then the removable singularity result in \cite{b:singularity} shows that $w_0(s,t)\to p_0\in M_0$ and $w_2(s,t)\to p_2\in M_2$ have uniform limits as $s^2+t^2\to\infty$, whereas $w_1(s,t)\to p_1^{\pm}\in M_1$ has two possibly different limits as $s\to \pm \infty$, both of which are lifts of $(p_0,p_2)\in L_{01}\circ L_{12}$, that is $(p_0,p_1^{\pm},p_1^{\pm},p_2)\in L_{01}\times_{M_1} L_{12}$. 
\end{remark}

Recall that even smooth removal of singularity generally does not provide lower bounds on the energy of bubbles except in situations with simple topology; see Remark~\ref{rmk:htop}.
In \S\ref{sec:bubbles} we establish this lower energy bound for figure eight and squashed eight bubbles by purely analytic means.
\medskip

\noindent{\bf Lower Energy Bound Lemma~\ref{lem:hbar}:}
For fixed almost complex structures and Lagrangians with immersed composition \(L_{01} \circ L_{12}\), the energy of nontrivial figure eight and squashed eight bubbles is bounded below by a positive quantity.

\begin{remark}[{\bf Gromov compactification of strip-shrinking moduli spaces -- soft rescaling version}]
\label{rmk:8bubbletree}
Note that the partial Gromov compactness statement in Theorem~\ref{thm:rescale}
only requires the composition $L_{01}\circ L_{12}$ to be immersed.
If the self-intersections of this immersion are locally clean, then the results of \cite{b:singularity} allow us to remove the singularities in the limits of the main component as well as the figure eight and squashed eight bubbles.
Moreover, the techniques of \cite{b:singularity} also provide ``bubbles connect'' results for long cylinders, so that a ``soft rescaling iteration'' (guided by capturing all energy) will yield the following full Gromov compactification of a moduli space with squiggly strip- (or annulus-) shrinking in terms of bubble trees: 
On the complement of the new seam, these trees are made up of trees of disk bubbles\footnote{\label{foot:fold}
Here we identify spheres with a single circle as seam and two patches labeled by \(M_k\) and \(M_\ell\) with disk bubbles in \(M_k^- \times M_\ell\) by ``folding'' across the seam as in the portion of the proof of Theorem~\ref{thm:rescale} treating the (D01) case.}
attached to the seams, with additional trees of sphere bubbles attached to the disks, seams, or interior of the patches.
On the new seam, as indicated in Figure~\ref{fig:tree}, starting from the root, every bubble tree starts with a (possibly empty or containing constant vertices) tree of squashed eight bubbles.
Attached to this are figure eight bubbles (possibly constant) in such a way that between any leaf and the root of the complete tree there is at most one figure eight.
Trees of disk bubbles with boundary on $L_{01}$ resp.\ $L_{12}$ can then be attached to the corresponding seams of the figure eight bubbles.
Finally, trees of sphere bubbles can be attached to the interior of each patch or the seams in this bubble tree.
The hierarchy in this compactification is illustrated in Figure~\ref{fig:prediction} (including the additional complication of Morse flow lines).
We will not provide a detailed construction of this compactification in the present paper, whose point is to establish the foundational analysis.
The full compactification (including Morse flow lines) will be part of the polyfold setup in \cite{bw:bigkahuna}.
\end{remark}

\medskip

Finally, Appendix~\ref{app:ex} in collaboration with Felix Schm\"{a}schke explains how pseudoholomorphic disks and strips can be viewed as special cases of figure eight bubbles, and we provide an example of a nontrivial figure eight bubble with embedded composition $L_{01}\circ L_{12}$ and target spaces $M_0=M_2=\CP^3$, $M_1=\CP^1 \times \CP^1$.

\subsection{Algebraic consequences of figure eight bubbling} \label{ss:algconsequences}
We establish our analytic results in settings that will allow us to describe compactified moduli spaces of pseudoholomorphic quilts with shrinking strips as zero sets of Fredholm sections in polyfold bundles.
This will put the universal regularization theory of \cite{hwz:fred2} at our disposal.
In particular, there will be no need to prove a separate gluing theorem for exhibiting configurations with figure eight bubbles as boundaries of the compactified moduli space: 
Pre-gluing constructions outlined in \S\ref{ss:boundary} will provide polyfold charts with boundary for bubble trees of (not necessarily pseudoholomorphic) quilted maps, and the proof of the nonlinear Fredholm property of the quilted Cauchy--Riemann operator in this polyfold setup will be essentially a version of the quadratic estimates in the classical gluing analysis (which in our case should follow from combining the results of \cite{isom} and \cite{b:singularity}).
Moreover, the boundary stratification of the ambient polyfold will directly induce the boundary stratification of the (regularized) moduli spaces.
This offers a {\bf method for predicting algebraic consequences:} If the considered moduli spaces can be cut out from ambient polyfolds, then the algebraic identities are given by summing over the top boundary strata of the polyfold.
Furthermore, if the local charts for the polyfold arise from pre-gluing constructions (as has been the case in all known examples), then the boundary stratification can be read off from the gluing parameters. 
We thus analyze in \S\ref{ss:HF} the boundary strata predicted by our Gromov-compactification in the case of strip-shrinking used to prove \eqref{eq:HFiso}.
Based on that, \S\ref{ss:Morse} gives a fair amount of detail on the extension of these moduli spaces by Morse trajectories.
This is desirable for easy polyfold implementation as well as reducing algebraic headaches by working with finitely generated chain complexes.
Finally, we use this analysis of boundary strata to predict in \S\ref{ss:algebra1} a generalization of the isomorphism \eqref{eq:HFiso} of Floer homology under geometric composition.

Besides a Fredholm description of figure eight moduli spaces and generalization of \eqref{eq:HFiso}, another motivation for developing the analysis described in \S\ref{ss:overview} is to obtain a new approach to the construction of the $A_\infty$-functors associated to monotone Lagrangian correspondences in \cite{mww}.
Whereas the latter requires a technically cumbersome construction of quilted surfaces with striplike ends and regular Hamiltonian perturbations, and is heavily restricted by monotonicity requirements, a polyfold setup for figure eight moduli spaces will provide a direct construction of curved\footnote{
Disk bubbling gives rise to $\mu_0$ terms in all $A_\infty$-relations, so that in particular squares of differentials can be nonzero.
This can be thought of as allowing curvature, hence we follow e.g.\ \cite{a:intro} and denote this generalized type of $A_\infty$-relations by the prefix ``curved''.
}
 $A_\infty$-functors for general Lagrangian correspondences.
 We explain in \S\ref{ss:algebra2} that we will be able to work directly with the singular quilted surfaces that realize the multiplihedra in \cite{mw}, since these are special cases of figure eights with $M_0=\pt$ and marked points on boundary and seams.
Analyzing moreover the boundary stratification of general figure eight moduli spaces, we arrive at the conjecture that geometric composition is encoded in terms of a curved \(A_\infty\)-bifunctor, which in turn specializes to the desired generalization of the $A_\infty$-functors in \cite{mww}.
In fact, these methods can be extended to generalizations of figure eight moduli spaces, leading to a conjectural symplectic $(\infty,2)$-category that we will further investigate in future work.
While the complete polyfold construction of these new algebraic structures in \cite{b:thesis,bw:bigkahuna} will be lengthy since we aim to provide a technically sound and easily portable basis for all future use of quilt moduli spaces, its rough form and algebraic consequences are already so apparent from our current understanding that it seems timely to give this detailed outline.
We do so in order to motivate the development of this theory and enable investigations of its future applications.

\subsection{Acknowledgements} 
The first ideas of studying figure eight bubbling, and preliminary results toward Theorem~\ref{thm:rescale}, were obtained by the second author during her collaboration with Chris Woodward.
We would like to thank Mohammed Abouzaid, Sheel Ganatra, Tim Perutz, Anatoly Preygel, and Zachary Sylvan for helpful conversations about \S\ref{ss:algebra2}, and the referees for constructive feedback on the exposition.
We gratefully acknowledge support by an NSF Graduate Research Fellowship, a Davidson Fellowship, and an NSF Career Grant, and would like to thank the Institute for Advanced Study, Princeton University, and the University of California, Berkeley for their hospitality.

\section{Squiggly strip quilts and figure eight bubbles}
\label{sec:bubbles}

The purpose of this section is to introduce the new bubbling phenomena with their basic properties.
Besides sphere and disk bubbling, two novel sorts of bubbles may appear: figure eight bubbles and squashed eight bubbles, both of which are introduced in Definition~\ref{def:8}.
In Lemma~\ref{lem:hbar}, we show that the energy of the figure eight and squashed eight bubbles is bounded below, which will be a key ingredient in our proof of the Gromov Compactness Theorem~\ref{thm:rescale}.
The proof of Lemma~\ref{lem:hbar}
-- which we defer to Appendix~\ref{sec:squiggly} --
relies on a \(\cC^\infty\)-compactness statement for squiggly strip shrinking from \cite{b:singularity},
quoted as Theorem~\ref{thm:nonfoldedstripshrink}.

\medskip

In this section and the next we will be working with symplectic manifolds and with pseudoholomorphic curves with seam conditions defined by compact Lagrangian correspondences
\begin{equation}\label{eq:lag}
L_{01} \subset M_0^- \times M_1,  \qquad 
L_{12} \subset M_1^- \times M_2.
\end{equation}
When the following intersection in \(M_0^- \times M_1 \times M_1^- \times M_2\) is transverse, we follow \cite{quiltfloer} and say that \(L_{01}\) and \(L_{12}\) have {\bf immersed composition}:
\begin{equation} \label{eq:comptrans}
(L_{01} \times L_{12}) \pitchfork (M_0^- \times \Delta_1 \times M_2) \; \eqqcolon \, 
L_{01}\times_{M_1} L_{12}.
\end{equation}
Indeed, the transversality implies that \(L_{01} \times_{M_1} L_{12} \subset M_0^- \times M_1 \times M_1^- \times M_2\) is a compact submanifold, and the projection \(\pi_{02}\colon L_{01} \times_{M_1} L_{12} \to M_0^- \times M_2\) is a Lagrangian immersion by e.g.\ \cite[Lemma~2.0.5]{quiltfloer} (which builds on \cite[\S4.1]{gu:rev}).
We will denote its image, the {\bf geometric composition} of $L_{01}$ and $L_{12}$, by
\begin{align*}
L_{01} \circ L_{12} \,\coloneqq \; \pi_{02}( L_{01} \times_{M_1} L_{12} )  \;\subset\;  M_0^- \times M_2 .
\end{align*}
In some contexts we will assume \textbf{cleanly-immersed composition}, that is an immersed composition such that any two local branches of the immersed Lagrangian \(L_{01} \circ L_{12}\) intersect cleanly.

\smallskip

\begin{center}
\fbox{\parbox{0.9\columnwidth}{
Throughout
\S\ref{sec:bubbles}, \S\ref{sec:rescale}, and Appendix~\ref{sec:squiggly}
we will work with fixed symplectic manifolds \(M_0, M_1, M_2\) without boundary which are either compact or satisfy boundedness assumptions as detailed in Remark~\ref{rmk:noncompact}, and compact Lagrangians \(L_{01}, L_{12}\) as in \eqref{eq:lag} with immersed composition.}}
\end{center}

\medskip

\noindent We will consider pseudoholomorphic quilts with respect to compatible almost complex
structures:
\begin{equation}\label{eq:J}
J_\ell\colon [-\rho,\rho]^2 \to\J(M_\ell,\omega_\ell)  \qquad\text{for}\; \ell = 0,1,2 .
\end{equation}
These are allowed to be domain-dependent\footnote{
Note in particular that we do not require \(J_\ell\) to be constant near the seam as in \cite{isom}.
This is so that our results are maximally applicable: For instance, the corrected proof of transversality in \cite{striptrans} does not guarantee regular almost complex structures that are constant near the seam.}
but are \(\cC^k\) as maps \([-\rho,\rho]^2\times \rT M_\ell\to \rT M_\ell\), where \(k\) will be either a positive integer or infinity.
Then compatibility means that
\begin{align} \label{eq:metrics}
g_\ell(s,t)\coloneqq \omega_\ell (- , J_\ell(s,t) - )
\end{align}
are metrics on \(M_\ell\) that are \(\cC^k\) in \((s,t)\in[-\rho,\rho]^2\).

In the Gromov Compactness Theorem~\ref{thm:rescale},
we will see that four types of bubbles may occur at points of energy concentration: the familiar sphere and disk bubbles, and the novel figure eight and squashed eight bubbles.
These novel types of bubbles result from energy concentrating on the limit seam \((-\rho,\rho)\times\{0\}\) in such a way that after rescaling (to achieve uniform gradient bounds), the middle squiggly strip converges to a straight strip of constant width, or zero width in the case of a squashed eight bubble.
Note here that limit maps of this rescaling will be pseudoholomorphic with respect to the almost complex structures at the point of energy concentration.

\begin{definition} \label{def:8}
Fix domain-independent almost complex structures \(J_\ell \in \J(M_\ell, \om_\ell)\) for \(\ell = 0,1,2\).
\vspace{-4mm}
\begin{itemlist}
\item
A {\bf figure eight bubble between \(\mathbf{L_{01}}\) and \(\mathbf{L_{12}}\)} is a triple of smooth maps \begin{align*}
 \ul w = \left( \begin{aligned}
 w_0&\colon\R \times (-\infty, -\tfrac 1 2] \to M_0 \\
 w_1&\colon\R \times [-\tfrac 1 2, \tfrac 1 2] \to M_1 \\
 w_2&\colon\R \times [\tfrac 1 2, \infty) \to M_2
 \end{aligned} \right)
 \end{align*}
that satisfy the Cauchy--Riemann equations \(\partial_s w_\ell + J_\ell(w_\ell)\partial_t w_\ell = 0\) for \(\ell= 0,1,2 \), fulfill the seam conditions 
\begin{gather*}
 (w_0(s, -\tfrac 1 2), w_1(s, -\tfrac 1 2)) \in L_{01}, \quad
(w_1(s, \tfrac 1 2), w_2(s, \tfrac 1 2)) \in L_{12} \qquad \forall \: s\in\R,
\end{gather*}
and have finite energy
\begin{align*}
\tint w_0^* \om_0 + \tint w_1^* \om_1 + \tint w_2^* \om_2 
\;= \tfrac 12 \Bigl( \tint |\d w_0|^2 + \tint |\d w_1|^2 + \tint |\d w_2|^2 \Bigr)
\;<\; \infty .
\end{align*}
\item
A {\bf squashed eight bubble with seam in \(\mathbf{L_{01} \times_{M_1} L_{12}}\)} is a triple of smooth maps 
\begin{align*}
 \ul w = \left( \begin{aligned}
 w_0&\colon\R \times (-\infty, 0] \to M_0 \\
 w_1&\colon\R \to M_1 \\
 w_2&\colon\R \times [0, \infty) \to M_2
 \end{aligned} \right)
 \end{align*}
that satisfy the Cauchy--Riemann equations 
\(\partial_s w_\ell + J_\ell(w_\ell)\partial_t w_\ell = 0\) for
\(\ell\in\{0,2\}\),
fulfill the generalized seam condition
 \begin{equation*}
(w_0(s,0), w_1(s), w_1(s), w_2(s,0))\in L_{01} \times_{M_1} L_{12}
\qquad\forall \: s\in \R ,
\end{equation*}
and have finite energy 
\begin{align*}
\tint w_0^* \om_0 + \tint w_2^* \om_2 
\;= \tfrac 12 \Bigl( \tint |\d w_0|^2 +\tint |\d w_2|^2 \Bigr)
\;<\; \infty .
 \end{align*}
\end{itemlist}
\end{definition}

The name ``figure eight" for the first type of pseudoholomorphic quilt comes from an equivalent description via stereographic projection (as explained in the following remark), while the name ``squashed eight'' indicates that the second type of quilt can occur as limits of figure eights whose entire energy concentrates at infinity, corresponding to shrinking the middle strip.
Alternatively, squashed eights can be viewed as punctured disk bubbles $D\less\{1\}\to M_0^-\times M_2$ with boundary mapping to the immersed Lagrangian $L_{01}\circ L_{12}$ in such a way that it has a smooth lift to \(L_{01} \times_{M_1} L_{12}\).
As explained below, the singularity cannot necessarily be removed.

\begin{remark}\rm  \label{rmk:stereographic}
\begin{itemlist}
\item 
Recall that a pseudoholomorphic map $\R^2 \to M$ gives rise to a punctured pseudoholomorphic sphere $w:S^2\less\{(0,0,1)\} \to M$ via stereographic projection $S^2\less\{(0,0,1)\}\to\R^2$, where we identify 
$S^2=\{(x,y,z)\in\R^3 \,|\, x^2+y^2+z^2 = 1\}$ with the unit sphere in \(\R^3\).
If the energy $\int w^*\omega_M$ is finite, then $w$ extends smoothly to the puncture $(0,0,1)$ by the standard removal of singularity theorem.

\item  One can view a pseudoholomorphic disk in $M_0^- \times M_1$
with boundary on $L_{01}$ as a quilt on $S^2$ arising from a quilt on $\R^2$, given by a $J_0$-holomorphic patch $w_0: \R\times(-\infty,0]\to M_0$ and a $J_1$-holomorphic patch $w_1: \R\times[0,\infty)\to M_1$ satisfying the seam conditions $(w_0(s,0), w_1(s,0)\bigr) \in L_{01}$, as follows:
Stereographic projection lifts these to pseudoholomorphic maps 
$w_0: S^2\less\{(0,0,1)\} \cap \{ y \leq 0 \}\to M_0$ and $w_1: S^2\less\{(0,0,1)\} \cap \{ y \geq 0 \} \to M_1$ defined on the two punctured hemispheres, which map the common boundary to $L_{01}$.
The standard removal of singularity can be interpreted to say that $w_0$ and $w_1$ extend smoothly to the puncture $(0,0,1)$, thus forming a pseudoholomorphic quilted sphere with one seam -- the equator $\{y=0\}$.

The two hemispheres are conformal to disks, so that the extended maps $w_0, w_1$ can be combined to a single pseudoholomorphic map from the disk to $M_0^- \times M_1$ w.r.t.\ the almost complex structure $(-J_0)\times J_1$, with boundary values in $L_{01}$.

\item
A squashed eight bubble gives rise to a quilt on \(S^2\) as in the previous item, but due to the generalized nature of the seam condition, the removal of singularity is less standard.
Under the hypothesis that \(L_{01}\) and \(L_{12}\) have cleanly-immersed composition, \cite[Appendix A]{b:singularity}  yields continuous extensions of \(w_0\) and \(w_2\) across \(\{(0,0,1)\}\), thus giving rise to a continuous but not necessarily smooth map from the disk to \(M_0^- \times M_2\) with boundary values in \(L_{01}\circ L_{12}\).

\begin{figure}
\centering
\def\svgwidth{0.7\columnwidth}
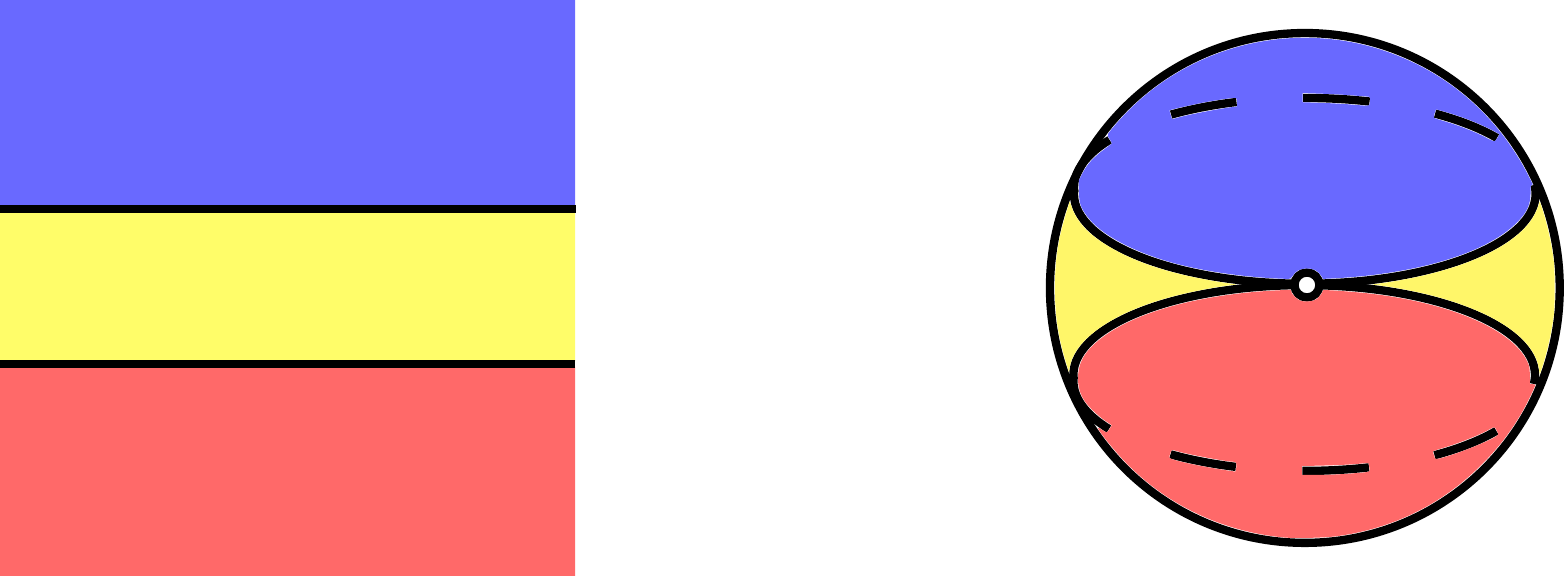
\caption{The left figure illustrates a figure eight bubble, while the right figure illustrates its reparametrization as a pseudoholomorphic quilt whose domain is the punctured sphere.
The domain of the left figure is \(\C\), and the point at infinity corresponds to the puncture in the right figure.
\label{fig:viewsof8}}
\end{figure}

\item
In the case of the figure eight bubble, \((w_0,w_1,w_2)\) is a pseudoholomorphic quilt with total domain \(\R^2\), which maps the seam \(\R\times\{-\half\}\) to \(L_{01}\) and the seam \(\R\times\{\half\}\) to \(L_{12}\).
Pulling these maps back to the sphere by stereographic projection, we obtain a pseudoholomorphic quilt as shown in Figure~\ref{fig:viewsof8} whose domain is the punctured sphere, and which consists of the following patches:
\begin{align*}
w_0\colon& S^2\less\{(0,0,1)\} \cap \{ y \leq -\half (1-z) \}\;\to\; M_0 , \\
w_1\colon&  S^2 \less\{(0,0,1)\} \cap \{ -\half (1-z) \leq y \leq \half (1-z)\} \;\to\; M_1 ,\\
w_2\colon&  S^2\less\{(0,0,1)\} \cap \{ \half (1-z) \leq y  \} \;\to\; M_2 .
\end{align*}
This quilt maps the seam \(\{y = -\half (1-z)\}$ to \(L_{01}\) and the seam \(\{y = \half (1-z)\}\) to \(L_{12}\).
The union of these two seams \(y=\pm\half(1-z)\) on the sphere looks like the figure eight when viewed from the positive \(z\)-axis: two circles that intersect tangentially at \((0,0,1)\).
\cite{b:singularity} establishes a continuous removal of singularity for \((w_0, w_1, w_2)\) at this tangential intersection when \(L_{01}\) and \(L_{12}\) have cleanly-immersed composition.
\end{itemlist}
\end{remark}

We now turn to the definition of, and lower bounds on, the minimal bubbling energy \(\hbar\), which we will need to control the number of bubbling points in the proof of Theorem~\ref{thm:rescale}.

\begin{definition} \label{def:bubble energy}
The {\bf minimal bubbling energy for almost complex structures $\mathbf{J_0,J_1,J_2}$} as in \eqref{eq:J} is the minimum $\hbar \coloneqq \min\{\hbar_{S^2},\hbar_{D^2},\hbar_{L_{01}\circ L_{12}},\hbar_{8} \}$ of the following types of bubble energies.\footnote{\label{foot:bubblesinstrata}
For noncompact manifolds as in Remark~\ref{rmk:noncompact}, spheres, disks, and figure eights touching or contained in the boundary strata of a compactification have to be considered here.}
 
\begin{itemlist}
\item
The \textbf{minimal sphere energy \(\hbar_{S^2}\)} is the minimal energy of a nonconstant\footnote{
If there are no nonconstant pseudoholomorphic spheres, e.g.\ because the symplectic manifolds are exact, then we set $\hbar_{S^2}=\inf\emptyset\coloneqq\infty$; and similarly in the following.}
$J_\ell(s_0,t_0)$-holomorphic sphere in $M_\ell$ for any $\ell = 0,1,2 $ and $(s_0,t_0)\in[-\rho,\rho]^2$.
\item
The \textbf{minimal disk energy \(\hbar_{D^2}\)} is the minimal energy of a nonconstant pseudoholomorphic disk in $(M_0\times M_1, (-J_0(s_0,0) )\times J_1(s_0,0))$ with boundary on $L_{01}$ or in $(M_1\times M_2, (-J_1(s_0,0) )\times J_2(s_0,0))$ with boundary on $L_{12}$ for any $s_0\in[-\rho,\rho]$.
\item
The {\bf minimal figure eight energy \(\hbar_8\)} is the minimal energy of a nonconstant \\
\((J_0(s_0,0), J_1(s_0,0), J_2(s_0,0))\)-holomorphic figure eight bubble between \(L_{01}\) and \(L_{12}\) for any $s_0\in[-\rho,\rho]$.
\item
The {\bf minimal squashed eight bubble energy \(\hbar_{L_{01} \circ L_{12}}\)} is the minimal energy of a nonconstant \((J_0(s_0,0), J_2(s_0,0))\)-holomorphic squashed eight with seam in \(L_{01} \times_{M_1} L_{12}\) for any $s_0\in[-\rho,\rho]$.
\end{itemlist}
\end{definition}

In the remainder of the section, we prove two results related to the minimal figure eight energy.
We begin by establishing positivity \(\hbar_8>0\) in Lemma~\ref{lem:hbar}, which we will need in \S\ref{sec:rescale} to bound the number of bubbles during strip shrinking.
This considerably strengthens the bubbling analysis in \cite{isom}, which merely proves that the number of bubbling points must be finite.

The final result, Proposition~\ref{prop:weak-remsing}, is a weak removal of singularity for any figure eight and squashed eight bubble.
It applies even when the geometric composition \(L_{01}\circ L_{12}\) is not immersed and yields a tuple of smooth maps with compact quilted domain that approximately capture the energy of the bubble and thus can be used in Remark~\ref{rmk:htop} to give a topological understanding of the possible bubble energies.

\begin{lemma} \label{lem:hbar}
Fix \(\rho>0\) and sequences \((J_0^\nu,J_1^\nu,J_2^\nu)_{\nu \in \N_0}\) of \(\cC^3\) almost complex structures on \([-\rho,\rho]^2\) as in \eqref{eq:J}, such that \(J_\ell^\nu\) is locally bounded in \(\cC^3\) and such that the \(\cC^2_\loc\)-limit of \(J_\ell^\nu\) is a \(\cC^\infty\) almost complex structure.
Then \(\inf_\nu \hbar(J_0^\nu,J_1^\nu,J_2^\nu)\) is positive, where \(\hbar\) is the minimum bubbling energy as in Definition~\ref{def:bubble energy}.
\end{lemma}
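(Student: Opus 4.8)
\emph{The plan} is to argue by contradiction, reducing the positivity of each of the four minimal energies comprising \(\hbar\) (Definition~\ref{def:bubble energy}) to a \(\cC^\infty_\loc\)-compactness statement: for sphere and disk bubbles I would use the classical interior and Lagrangian-boundary mean value inequalities, and for figure eight and squashed eight bubbles the squiggly-strip-shrinking compactness of Theorem~\ref{thm:nonfoldedstripshrink}. So suppose \(\inf_\nu \hbar(J_0^\nu,J_1^\nu,J_2^\nu)=0\). After passing to a subsequence, one of \(\hbar_{S^2},\hbar_{D^2},\hbar_8,\hbar_{L_{01}\circ L_{12}}\) along \((J^\nu)\) tends to \(0\), witnessed by nonconstant bubbles \(\ul w^\nu\) of the corresponding type, each pseudoholomorphic for domain-independent almost complex structures \(\hat J_\ell^\nu:=J_\ell^\nu(s_\nu,t_\nu)\) for some \((s_\nu,t_\nu)\in[-\rho,\rho]^2\), with \(E(\ul w^\nu)\to 0\). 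Passing to a further subsequence with \((s_\nu,t_\nu)\to(s_\infty,t_\infty)\), the hypotheses ensure the \(\hat J_\ell^\nu\) are uniformly \(\cC^3\)-bounded and converge in \(\cC^2\) to the smooth \(\hat J_\ell^\infty:=J_\ell^\infty(s_\infty,t_\infty)\).

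\emph{First}, since the \(\hat J_\ell^\nu\) are uniformly \(\cC^1\)-bounded and \(M_\ell,L_{01},L_{12}\) are compact with \(L_{01},L_{12}\) \emph{embedded}, the interior and Lagrangian-boundary mean value inequalities (\cite[\S4.3]{ms:jh}; a seam condition in \(L_{01}\) or \(L_{12}\) is first folded across the seam into a boundary condition on the embedded Lagrangian \(L_{01}\subset M_0^-\times M_1\) resp.\ \(L_{12}\subset M_1^-\times M_2\); the generalized seam of a squashed eight is discussed below) hold with \(\nu\)-independent constants \(\hbar_1,C_1>0\). Since \(E(\ul w^\nu)<\hbar_1\) for large \(\nu\) and \(|d\ul w^\nu|\to 0\) at infinity by finite energy, the supremum \(c_\nu:=\|d\ul w^\nu\|_{L^\infty}\) is attained, say at \(z_\nu\); applying the mean value inequality on the largest admissible ball about \(z_\nu\) gives \(c_\nu\cdot\bigl(\text{distance from }z_\nu\text{ to the seams}\bigr)\le\sqrt{C_1\,E(\ul w^\nu)}\), so \(c_\nu\to0\) and this product tends to \(0\). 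For a sphere or disk bubble no seam bounds the admissible radius, so letting \(r\to\infty\) forces \(c_\nu=0\), contradicting nonconstancy; this settles \(\inf_\nu\hbar_{S^2}(J^\nu)>0\) and \(\inf_\nu\hbar_{D^2}(J^\nu)>0\).

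\emph{Next}, for figure eights and squashed eights I would rescale: set \(\hat w^\nu:=\ul w^\nu(z_\nu+\,\cdot\,/c_\nu)\) on the rescaled patches. Then \(\hat w^\nu\) is \(\hat J^\nu\)-holomorphic, \(|d\hat w^\nu|\le 1\) with \(|d\hat w^\nu(0)|=1\), \(E(\hat w^\nu)=E(\ul w^\nu)\to 0\) by conformal invariance, and, since \(c_\nu\to0\) while the above product tends to \(0\), both seams run into an \(o(1)\)-neighbourhood of \(0\) and the \(M_1\)-patch becomes a straight strip of width \(c_\nu\to0\) (absent, for a squashed eight). After a vertical translation by \(o(1)\), \(\hat w^\nu\) is on each ball \(B_R(0)\) a \((\hat J_0^\nu,\hat J_1^\nu,\hat J_2^\nu)\)-holomorphic squiggly strip quilt for \((L_{01},L_{12})\) of constant width \(f^\nu\equiv c_\nu/2\Rightarrow0\) with uniformly bounded gradient. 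Since there is no bubbling, Theorem~\ref{thm:nonfoldedstripshrink} gives a subsequence converging in \(\cC^\infty_\loc\) to a \((\hat J_0^\infty,\hat J_2^\infty)\)-holomorphic degenerate strip quilt \(\ul w^\infty=(u_0,u_1,u_2)\) with the lifted seam condition \eqref{gen bc} (for a squashed eight: the standard boundary compactness for the two half-plane patches, the embedded branch of \(L_{01}\circ L_{12}\) being pinned down by the continuous lift \(u_1\), using that \(\pi_{02}|_{L_{01}\times_{M_1}L_{12}}\) is a local diffeomorphism onto \(L_{01}\circ L_{12}\)). Then \(E(\ul w^\infty)=\half\bigl(\tint|du_0|^2+\tint|du_2|^2\bigr)\le\liminf E(\hat w^\nu)=0\), so \(u_0,u_2\) are constant; since \(\pi_{02}|_{L_{01}\times_{M_1}L_{12}}\) has discrete fibres, \eqref{gen bc} then forces the continuous \(u_1\) to be constant too, so \(\ul w^\infty\) is constant. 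But \(\cC^1\)-convergence at \(0\) — reparametrising the \(M_1\)-patch to unit width when \(0\) lies in the collapsing strip, which keeps the tangential derivative while sending the normal one to \(c_\nu|\partial_t\hat w_1^\nu|\to0\) — gives \(1=\lim|d\hat w^\nu(0)|=|d\ul w^\infty(0)|=0\), a contradiction. Hence \(\inf_\nu\hbar(J_0^\nu,J_1^\nu,J_2^\nu)>0\).

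\emph{The hard part} will be the uniform \(\epsilon\)-regularity at the \emph{generalized} seam of a squashed eight, where \eqref{gen bc} is a priori only a boundary condition on the immersed \(L_{01}\circ L_{12}\). The resolution I envisage is to lift the seam values continuously to a path (namely \(u_1\)) in the compact \emph{embedded} submanifold \(L_{01}\times_{M_1}L_{12}\), on which \(\pi_{02}\) restricts to a local diffeomorphism, so that in a tubular neighbourhood this becomes a genuine embedded-Lagrangian boundary problem; its mean value inequality then has \(\nu\)-independent constants because the local data vary over the compact \(L_{01}\times_{M_1}L_{12}\) and the \(\cC^3\)-bounded \(\hat J^\nu\), and the uniform gradient bounds make the \(\ul w^\nu\) equicontinuous, so the neighbourhoods can be chosen uniformly in \(\nu\).
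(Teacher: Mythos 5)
Your overall strategy matches the paper's in its essential ingredients: argue by contradiction, dispatch spheres and disks via the standard mean value inequality on balls of unbounded radius, and for figure eights and squashed eights rescale at the point of maximal gradient and invoke the strip-shrinking compactness of Theorem~\ref{thm:nonfoldedstripshrink}. However, two of your steps have gaps.

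\emph{First}, your contradiction step for figure eights --- ``\(1 = \lim |\d\hat w^\nu(0)| = |\d\ul w^\infty(0)| = 0\) via \(\cC^1\)-convergence at \(0\)'' --- does not follow from the part of Theorem~\ref{thm:nonfoldedstripshrink} you invoke when the gradient maximum is attained in the shrinking \(M_1\)-strip. The theorem gives \(\cC^k_\loc\) convergence for the shifted half-plane patches and for \(v_1^\nu\) \emph{restricted to the centerline} \(\{t=0\}\); it says nothing about derivatives of \(v_1^\nu\) at off-centerline points inside the strip. Your ``reparametrize to unit width'' device controls the normal derivative of the reparametrized map, but transferring the tangential derivative from the off-center maximum point to the centerline would require uniform \(\cC^2\) bounds on the 2D map \(v_1^\nu\), which the theorem statement does not assert. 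The paper avoids this by invoking the theorem's final clause (if \(\liminf_\nu \sup |\d v^\nu| > 0\) then \(v_0^\infty, v_2^\infty\) are not both constant) and then deriving a contradiction from the nonconstant limit having positive energy \(\le \liminf E(\ul w^\nu) = 0\). Your proof would be repaired by invoking that clause directly, in place of the pointwise \(\cC^1\) argument.

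\emph{Second}, for squashed eights you need a uniform mean value inequality at the generalized seam in order to even know that \(c_\nu := \sup |\d\ul w^\nu|\) is finite and that \(c_\nu \cdot (\text{distance to the seam}) \to 0\). You flag this as the hard part and propose lifting the seam values to the embedded \(L_{01}\times_{M_1}L_{12}\), but the resolution is circular: to know the lifted boundary arc stays within a uniform chart where \(\pi_{02}\) is an embedding (so that the constants in the mean value inequality are uniform), you invoke ``the uniform gradient bounds make the \(\ul w^\nu\) equicontinuous'' --- yet those gradient bounds are precisely what the mean value inequality is supposed to deliver. The paper explicitly flags this (``the mean value inequality as stated in the literature requires the boundary to map to an embedded Lagrangian, so we cannot deduce uniform gradient convergence to zero'') and sidesteps it by a dichotomy: either \(L := \lim_\nu \sup |\d w^\nu_\ell|\) is finite, in which case one rescales and localizes to a single chart of \(L_{01}\times_{M_1}L_{12}\) and applies standard bootstrapping or Theorem~\ref{thm:nonfoldedstripshrink}; or \(L = \infty\), in which case one applies the Hofer trick to produce a nonconstant limit after rescaling. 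Your argument should adopt a similar case split rather than trying to establish the uniform gradient bound outright.
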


\noindent We defer the proof to p.~\pageref{proof:hbar} of Appendix~\ref{sec:squiggly}.

\begin{remark} \label{rmk:htop}
The minimal bubbling energies
\(\hbar_{S^2}, \hbar_{D^2}, \hbar_8, \hbar_{L_{01} \circ L_{12}}\) in Definition~\ref{def:bubble energy} can also be bounded below by concrete topological quantities
\begin{align*}
\hbar_{S^2} \geq \hbar_{S^2}^{\rm top}, \qquad \hbar_{D^2} \geq \hbar_{D^2}^{\rm top}, \qquad \min\{\hbar_8, \hbar_{L_{01}\circ L_{12}}\} \geq \hbar_8^{\rm top}
\end{align*}
rather than the abstract analytic lower bound from Lemma~\ref{lem:hbar}.
For sphere and disk bubbles, this topological quantity is the minimal positive symplectic area of spherical or disk (relative) homotopy classes.
Proposition~\ref{prop:weak-remsing} bounds the minimal energy of squashed eight and figure eight bubbles by the minimal positive symplectic area of ``quilted homotopy classes''
\begin{equation*}
\hbar^{\rm top}_8 \coloneqq \inf\left\{ \sum_{\ell \in \{0,1,2\}} \langle [\omega_\ell], [u_\ell] \rangle \,\left|\,
\begin{matrix}
u_0:D^2\to M_0, \\ 
u_1:[0,2\pi]\times [-\frac12, \frac12]\to M_1,\\
u_2:D^2\to M_2 ,
\end{matrix}
\quad\eqref{eq: htop 8 seam},
\eqref{eq: htop 8 w1 ends}
\right.\right\}
\end{equation*}
 for which $u_0, u_1, u_2$ are continuous and satisfy the seam conditions
\begin{equation}\label{eq: htop 8 seam}
(u_0(e^{-i\theta}), u_1(\theta,-\tfrac12))\in L_{01} , \quad
(u_1(\theta,\tfrac12), u_2(e^{i\theta}))\in L_{12} 
\qquad\forall \: \theta\in[0,2\pi]
\end{equation}
and the ``constant limit conditions'' 
\begin{equation}\label{eq: htop 8 w1 ends}
u_1(0, t_1) = u_1(0, t_2), \quad u_1(2\pi, t_1) = u_1(2\pi, t_2) \quad \forall \: t_1, t_2 \in [-\tfrac12, \tfrac12].
\end{equation}
We differentiate such quilt maps by the relation between the two constants $u_1(0,-)$ and $u_1(2\pi,-)$:
\begin{itemize}
\item A {\bf (non-switching) homotopy figure eight} is a tuple of maps $(u_0, u_1, u_2)$ as above with $u_1(2\pi, -)= u_1(0, -)$. 
That is, $u_1: S^1\times [-\frac12, \frac12]\to M_1$ is in fact defined on an annulus with seam conditions that identify $S^1\times \{\pm\frac12\}$ with the boundaries of the two disk patches.
\item
A {\bf sheet-switching homotopy figure eight} is a tuple of maps $(u_0, u_1, u_2)$ as above with \(u_1(0, -) \eqqcolon u_1^- \neq u_1^+ \coloneqq u_1(2\pi, -)\), where \(u_1^-, u_1^+\) represent two different lifts of $\bigl( u_0(1),  u_2(1)\bigr)\in L_{01}\circ L_{12}$ to \(L_{01} \times_{M_1} L_{12}\).
\end{itemize}
Note that sphere homotopy classes as well as disk homotopy classes for \(L_{01}\) and \(L_{12}\) can be represented by non-switching homotopy figure eights with one or two constant patches.\footnote{
Given a sphere or disk bubble, we can attach it to a constant homotopy figure eight under mild hypotheses (e.g.\ that \(L_{01}, L_{12}\) are nonempty and \(M_1\) is connected): Given any two points on \(L_{01}\) and \(L_{12}\), make a zero-energy homotopy figure eight with these values on the seams, \(u_0\) and \(u_2\) constant, and \(u_1\) a path between the two projections.}
However, $\hbar^{\rm top} \coloneqq \min\{\hbar^{\rm top}_{S^2}, \hbar^{\rm top}_{D^2}, \hbar^{\rm top}_8\}$ is not generally positive unless the symplectic and Lagrangian manifolds have very simple topology. 
For example, we have $\hbar^{\rm top}_{S^2} = 0$ as soon as $\lan [\om_\ell] , \pi_2(M_\ell) \ran\subset\R$ contains two incommensurate values for some $\ell\in\{0,1,2\}$.
\end{remark}

The possible homotopy classes of figure eight bubbles in the above remark can be deduced from the removal of singularity theorem in \cite{b:singularity}.
However, this also follows from the following weaker result which requires fewer estimates.
It yields not a pseudoholomorphic quilt on \(S^2\) but a smooth quilt map with domain \(S^2 \cong (D^2)^- \cup (S^1 \times [0, 1]) \cup D^2\) that approximately captures the energy of the bubble.
This result was first announced in \cite{w:chekanov}, but we include it here for convenience.
As in Lemma~\ref{lem:hbar}, we do not assume $L_{01}$ and $L_{12}$ to have immersed composition.

\begin{proposition} \label{prop:weak-remsing}
Let \(L_{01} \subset M_0^- \times M_1\), \(L_{12} \subset M_1^- \times M_2\) be compact Lagrangian correspondences\footnote{
This result does not require the geometric composition \(L_{01}\circ L_{12}\) to be embedded or immersed.
}, and let \((w_0,w_1,w_2)\) be either (1) a figure eight bubble between \(L_{01}\) and \(L_{12}\) or (2) a squashed eight bubble with seam in \(L_{01} \times_{M_1} L_{12}\), where \(w_1\) is the pullback \(w_1(s,t) \coloneqq \ol w_1(s)\).
Then for any \(\eps > 0\) there exist smooth maps \(u_0:D^2\to M_0\), \(\wh u_1:[0,2\pi]\times [-\frac12,\frac12]\to M_1\), \(u_2:D^2\to M_2\) satisfying the seam conditions
\begin{align*}
\bigl(u_0(e^{-i\theta}),\wh u_1(\theta,-\tfrac12)\bigr)\in L_{01} , \quad
\bigl(\wh u_1(\theta,\tfrac12),u_2(e^{i\theta})\bigr)\in L_{12} 
\qquad\forall \: e^{i\theta}\in\partial D^2\cong \R/2\pi\Z ,
\end{align*}
and whose energy is \(\eps\)-close to that of \((w_0,w_1,w_2)\),
\begin{align*}
\Bigl|  \Bigl( \tint u_0^*\om_0 + \tint \wh u_1^*\om_1 + \tint u_2^*\om_2 \Bigr)
- \Bigl( \tint w_0^*\om_0 + \tint w_1^*\om_1 + \tint w_2^*\om_2 \Bigr) \Bigr|
\leq \eps.
\end{align*}
Moreover, \(\wh u_1\) can be chosen to be constant on the two lifts of the line \(\{[0]=[2\pi]\}\times[-\frac12,\frac12]\subset S^1\times [-\frac12,\frac12]\), so that \(\wh u_1|_{\{0\}\times[-\frac12,\frac12]}\equiv p_1^+\), \(\wh u_1|_{\{2\pi\}\times[-\frac12,\frac12]}\equiv p_1^-\) form together with \(p_0\coloneqq u_0(e^{i0})\), \(p_2\coloneqq u_2(e^{i0})\) two lifts \((p_0,p_1^\pm,p_1^\pm,p_2)\in L_{01}\times_{M_1}L_{12}\) of the same point \((p_0,p_2)\in L_{01}\circ L_{12}\).

In particular, if \(\pi_{02} : L_{01} \times_{M_1} L_{12} \to L_{01}\circ L_{12}\) is injective, then \(\wh u_1\) can be chosen such that it induces a smooth map \(u_1: S^1\times[-\frac12,\frac12] \to M_1\). 
\end{proposition}

\noindent We defer the proof to p.~\pageref{proof:weak-remsing} of Appendix~\ref{sec:squiggly}.

\begin{remark}
Under the hypothesis that \(L_{01}, L_{12}\) have immersed composition, Proposition~\ref{prop:weak-remsing} can be modified to show that a squashed eight can be approximated by a homotopy squashed eight, rather than a homotopy figure eight.
In this situation, the minimum squashed eight energy \(\hbar_{L_{01} \circ L_{12}}\) can be bounded below by the minimum positive symplectic area of ``homotopy squashed eights'': \begin{align*}
\hbar_{L_{01} \circ L_{12}}^{\rm top} \coloneqq \inf\bigl\{ \langle [(-\omega_0) \oplus \omega_2] , [u] \rangle >0 \,\big|\, u\in\cC^0(D,M_0^-\times M_2) , u(\partial D) \subset L_{01}\circ L_{12}   \bigr\} .
\end{align*}
\end{remark}

\section{Toward Gromov compactness for strip shrinking}
\label{sec:rescale}

In this section we state and prove the Gromov Compactness Theorem~\ref{thm:rescale}, which is the main result of this paper.
In order to focus on the relevant effects, rather than deal with complicated notation, Theorem~\ref{thm:rescale} is stated in the setting of squiggly strip quilts, with the width of the middle strip shrinking obediently to zero.
However, the results of this section directly generalize to a sequence of pseudoholomorphic quilt maps whose domains are quilted surfaces which vary only by the width of one patch -- diffeomorphic to a strip or annulus -- going to zero.

Theorem~\ref{thm:rescale} is a refinement and generalization of \cite[Theorem 3.3.1 and Lemma 3.3.2]{isom}, where compactness up to energy concentration is proven for strip shrinking in the special case of embedded composition, though only in an $H^2\cap W^{1,4}$-topology and with a lower bound on the energy concentration that has no geometric interpretation but arises by contradiction from mean value inequalities.  
(In fact, the $H^2\cap W^{1,4}$-convergence does not even suffice to deduce nontriviality of the weak limit of rescaled solutions near a bubbling point.)
We establish full $\cC^\infty_{\rm loc}$-convergence in the most general natural case, with the further generalization to noncompact manifolds being discussed in Remark~\ref{rmk:noncompact}.
The proof will moreover illuminate the origin of the different bubbling phenomena.
Analytically, it relies on Theorem~\ref{thm:nonfoldedstripshrink}, a result from \cite{b:singularity}.
A further generalization is that we will allow the two seams bordering the middle strip to not be straight, so that Theorem~\ref{thm:rescale} allows the first author to establish a removal of singularity theorem for figure eight bubbles in \cite{b:singularity}.

The underlying quilted domains of our pseudoholomorphic quilt maps will be open squares with two seams.
One should imagine these as part of the domain of a larger pseudoholomorphic quilt with compact domain or with quilted cylindrical ends.
The basic (localized and rescaled) examples studied in \cite{isom} are squares $(-1,1)^2$ with seams $(-1,1)\times\{\pm\delta\}$, whose main feature is a middle strip $(-1,1)\times[-\delta,\delta]$ of constant width $2\delta>0$.
The following definition generalizes the underlying quilted surfaces to allow middle domains $\{(s,t)\in (-1,1)^2 \,|\, |t|\leq f(s) \}$ of local widths $2f(s)>0$ varying with $s$.
Since diffeomorphically such domains are still strips, we call them ``squiggly strips''.

\begin{definition} \label{def:squigglystrip}
Fix \(\rho>0\), a real-analytic function \(f\colon [-\rho,\rho] \to (0, \rho/2]\), almost complex structures \(J_\ell\), \(\ell = 0,1,2\) as in \eqref{eq:J}, and a complex structure \(j\) on \([-\rho,\rho]^2\).
A {\bf \(\mathbf{(J_0,J_1,J_2,j)}\)-holomorphic size-\(\mathbf{(f,\rho)}\) squiggly strip quilt for \(\mathbf{(L_{01},L_{12})}\)} is a triple of smooth maps
\begin{align} \label{eq:squigglymaps}
 \ul{v}=\left(
\begin{aligned}
v_0\colon \{(s,t) \in (-\rho,\rho)^2 \: | & \; t \leq -f(s) \} \to M_0 \\ 
v_1\colon \{(s,t) \in (-\rho,\rho)^2\: | &  \;  |t | \leq f(s) \} \to M_1 \\ 
v_2\colon \{(s,t) \in (-\rho,\rho)^2\: | & \; t \geq f(s) \} \to M_2 
\end{aligned}
\right) 
\end{align}
that fulfill the seam conditions
\begin{align} \label{eq:squigglyseams}
\bigl(v_0(s,-f(s)),v_1(s,-f(s))\bigr)\in L_{01}, \qquad \bigl(v_1(s,f(s)), v_2(s,f(s))\bigr)\in L_{12} \qquad \forall \: s \in (-\rho,\rho),
\end{align}
satisfy the Cauchy--Riemann equations
\begin{align} \label{eq:squigglyCR}
\d v_\ell(s,t) \circ j(s,t) - J_\ell(s,t,v_\ell(s,t)) \circ \d v_\ell(s,t) = 0 \qquad \forall\: \ell = 0,1,2
\end{align}
for \((s,t)\) in the relevant domains, 
and have finite energy\footnote{
Throughout we will make use of the standard energy identity \cite[Lemma 2.2.1]{ms:jh} for pseudoholomorphic maps.
}
\begin{align*}
E(\ul{v})
\,\coloneqq \; \tint v_0^*\om_0 + \tint v_1^*\om_1 + \tint v_2^*\om_2
\;= \tfrac 12 \Bigl( \tint |\d v_0|^2 +\tint |\d v_1|^2 +\tint |\d v_2|^2 \Bigr)
\;<\; \infty .
\end{align*}
\end{definition}
\noindent When \(j\) is the standard complex structure \(i\), \eqref{eq:squigglyCR} reduces to the equation
\begin{align*}
\partial_s v_\ell(s,t) + J_\ell(s,t, v_\ell(s,t))\partial_t v_\ell(s,t)=0.
\end{align*}

\noindent 
When considering a \((J_0,J_1,J_2)\)-holomorphic squiggly strip quilt \(\ul v^\nu\),
it will be useful to consider the {\bf energy density functions}
\begin{align} \label{eq:density}
\bigl| \d\ul{v} \bigr|:(-\rho,\rho)^2\to[0,\infty), \qquad
\bigl|\d\ul{v}(s,t)\bigr| \coloneqq \left( \bigl|\d v_0(s,t )\bigr|_{J_0}^2 
+ \bigl|\d v_1(s,t)\bigr|_{J_1}^2 + \bigl|\d v_2(s, t)\bigr|_{J_2}^2 \right)^{\frac 12},
\end{align}
where the norms \(|\d v_\ell(s,t)|\) are induced by the metrics defined by \(\om_\ell\) and \(J_\ell\) as in \eqref{eq:metrics} and set to be zero on the complement of the domain of \(v_\ell\).
If \(\ul v\) is a squiggly strip quilt, then \(|\d \ul v|\) is upper semi-continuous at points in
\(\{ (s, \pm f(s)) \}\), continuous elsewhere, and satisfies \(E(\ul{v})= \tfrac 12\int |\d\ul{v} |^2\).

The goal of this paper is to generalize and strengthen the strip shrinking analysis in \cite{isom}, which considers sequences of \((J_0^\nu,J_1^\nu,J_2^\nu)\)-holomorphic squiggly strip quilts of width \(f^\nu\equiv\delta^\nu\to 0\).
For that purpose we consider varying width functions $f^\nu$ that converge to zero in the following way.

\begin{definition} \label{def:obedience}
Fix \(\rho>0\).  A sequence \(\bigl( f^\nu \bigr)_{\nu\in\N}\) of real-analytic functions \(f^\nu\colon [-\rho,\rho]\to (0,\rho/2]\)
\textbf{obediently shrinks to zero}, \(\mathbf{f^\nu \Rightarrow 0}\),
if 
\begin{align}
\label{eq:obedient1}
\max_{s\in[-\rho,\rho]} f^\nu(s) \underset{\nu\to\infty}{\longrightarrow} 0
\end{align}
and
\begin{align}
\label{eq:obedient2}
\sup_{\nu\in\N}\; \frac{\max_{s\in[-\rho,\rho]} \bigl| \tfrac{\rd^k}{\rd s^k}f^\nu(s) \bigr|}{\min_{s\in[-\rho,\rho]} f^\nu(s)} \eqqcolon C_k <\infty  \qquad\forall \: k\in \N_0,
\end{align}
and in addition there are holomorphic extensions \(F^\nu\colon [-\rho,\rho]^2 \to \C\) of \(f^\nu(s) = F^\nu(s,0)\) such that \((F^\nu)\) converges \(\cC^\infty\) to zero\footnote{To see why the last condition is
not redundant, consider the sequence of functions \((F^\nu\colon [-1,1]^2 \to \C)\) defined by
$F^\nu(z) \coloneqq \exp(-2\nu)\sin(4\nu z) + \exp(-\nu)$.
For $x \in \R$, we have the formulas
$F^\nu(x) = \exp(-2\nu)\sin(4\nu x) + \exp(-\nu)$ and $F^\nu(ix) = i\bigl(\exp(2\nu(2x-1))-\exp(-2\nu(2x+1))\bigr)/2 + \exp(-\nu)$,
so the restrictions to  $[-1,1] \times \{0\}$ satisfy \eqref{eq:obedient1}, \eqref{eq:obedient2}
but $F^\nu(3i / 4)$ diverges to $i\infty$.}.
\end{definition}

We will see in Theorem~\ref{thm:rescale} that any sequence \((\ul v^\nu)\) of pseudoholomorphic squiggly strip quilts of bounded energy and obediently shrinking widths \(f^\nu\Rightarrow 0\) has a subsequence that -- up to finitely many points where energy concentrates -- converges to a degenerate strip quilt, in which the middle domain mapping to \(M_1\) is replaced by a single straight seam mapping to the immersed Lagrangian \(L_{01} \circ L_{12}\).
Here bubbling near the middle squiggly strip may lead to limit maps whose seam values switch between the sheets of \(L_{01}\circ L_{12}\).
Thus we need to allow for singularities in the degenerate strip quilts as follows.

\begin{definition}
Fix \(\rho>0\), almost complex structures \(J_\ell\), \(\ell \in \{0,2\}\) as in \eqref{eq:J}, and a complex structure \(j\) on \([-\rho,\rho]^2\).
A {\bf \(\mathbf{(J_0,J_2,j)}\)-holomorphic size-\(\mathbf{\rho}\) degenerate strip quilt for \(\mathbf{(L_{01} \times_{M_1} L_{12})}\) with singularities} is a triple of smooth maps
\begin{align} \label{eq:degenmaps}
\ul{v}=\left(
\begin{aligned}
u_0\colon & \, (-\rho,\rho)\times(-\rho,0] \;\less\; S\times\{0\} \to M_0 \\ 
u_1\colon & \, (-\rho,\rho) \;\less\; S \to M_1 \\
u_2\colon & \, (-\rho,\rho)\times[0,\rho) \;\less\; S\times\{0\} \to M_2 
\end{aligned}
\right)
\end{align}
defined on the complement of a finite set \(S\subset\R\) that fulfill the lifted seam condition
\begin{align} \label{eq:degenseams}
\bigl( u_0(s,0), u_1(s),
u_1(s),
u_2(s,0) \bigr) \in  
L_{01}\times_{M_1}  L_{12}
\qquad\forall \: s\in(-\rho,\rho)\less S,
\end{align}
satisfy the Cauchy--Riemann equation \eqref{eq:squigglyCR} for \(\ell \in \{0,2\}\) and
\((s,t)\) in the relevant domains, and have finite energy 
\begin{align*}
E(\ul{u}) \, \coloneqq \; \tint u_0^*\om_0 + \tint u_2^*\om_2
\;= \tfrac 12 \Bigl( \tint |\d u_0|^2 + \tint |\d u_2|^2 \Bigr)
\;<\; \infty.
\end{align*}
\end{definition}

\begin{remark} \label{rem:singset}
If \(u_1\) in the above definition continuously extends to a point $p \in S$,
then -- by the standard removal of singularity result with embedded Lagrangian boundary conditions --  all \(u_i\) extend smoothly to this point.
Indeed, in this case we may restrict $u_0,u_2$ to a small $B_\eps(p)\setminus\{p\}$, then fold across $(p-\eps,p)\cup(p,p+\eps)$ to obtain a punctured pseudoholomorphic half-disk with boundary condition in a single local branch of $L_{01}\circ L_{12}$.
Hence one can prescribe \(S\) to be the set of discontinuities of \(u_1\).

In fact, the removal of singularity for squashed eights established in \cite[Appendix A]{b:singularity} shows that \(u_0\) and \(u_2\) extend continuously to any point in \(S\) under the hypothesis that \(L_{01}\) and \(L_{12}\) have cleanly-immersed composition.
In this case, the only map with any discontinuities is \(u_1\).
\end{remark}

\begin{theorem} \label{thm:rescale} 
Fix \(\rho > 0\), sequences \(J_0^\nu ,J_1^\nu ,J_2^\nu\) of smooth almost complex structures on \([-\rho,\rho]^2\) as in \eqref{eq:J} that converge \(\cC^\infty_\loc\) to \(J_\ell^\infty\colon [-\rho,\rho]^2 \to \J(M_\ell,\om_\ell)\) for \(\ell = 0,1,2 \), and a sequence \((f^\nu\colon[-\rho,\rho] \to (0,\rho/2])\) of real-analytic functions shrinking obediently to zero as in Definition~\ref{def:obedience}.
Then for any sequence \((\ul{v}^\nu)_{\nu\in\N}\) of \((J_0^\nu,J_1^\nu,J_2^\nu)\)-holomorphic size-\((f^\nu,\rho)\) squiggly strip quilts for \((L_{01},L_{12})\) as in Definition~\ref{def:squigglystrip} with bounded energy \(E \coloneqq \sup_{\nu \in \N} E(\ul v^\nu) < \infty\) there exist finitely many blow-up points \(Z=\{z_1,\ldots,z_{N}\}\subset (-\rho,\rho)^2\) and a subsequence that Gromov-converges in the following sense:
\begin{enumerate}
\item
There exists a \((J_0^\infty, J_2^\infty)\)-holomorphic degenerate strip quilt \(\ul v^\infty\) for \(L_{01} \times_{M_1} L_{12}\) with singularities, whose singular set \(S \subset (-\rho,\rho) \cong (-\rho,\rho)\times\{0\}\) is contained in \(\{z_1, \ldots, z_N\}\cap (-\rho,\rho)\times\{0\}\), such that \((v_0^\nu(s,t - f^\nu(s)))\) resp.\ \((v_1^\nu(s,0))\) resp.\ \((v_2^\nu(s,t + f^\nu(s)))\) converge \(\cC^\infty_\loc\) on the domains \((-\rho,\rho) \times (-\rho,0] \less Z\) resp.\ \((-\rho,\rho) \times \{0\} \less Z\) resp.\ \((-\rho,\rho) \times [0, \rho) \less Z\) to \(v_0^\infty\) resp.\ \(v_1^\infty\) resp.\ \(v_2^\infty\).
\item
There is a concentration of energy $\hbar>0$, given by the minimal bubbling energy from Definition~\ref{def:bubble energy}, at each $z_j$ in the sense that there is a sequence of radii $r^\nu\to 0$ such that
\begin{align*}
\liminf_{\nu\to\infty} \int_{B_{r^\nu}(z_j)} \tfrac 1 2|\d\ul{v}^\nu|^2 \;\geq \; \hbar > 0,
\end{align*}
where the energy densities $|\d\ul{v}^\nu|$ are defined as in \eqref{eq:density}.
\item At least one type of bubble forms near each blow-up point \(z_j=(s_j,t_j)\):
There is a sequence \((w^\nu)\) of (tuples of) maps obtained by rescaling the maps defined on the intersection of the respective domain with \(B_{r^\nu}(z_j)\), which converges in $\cC^\infty_\loc$ to one of the following: 
\begin{enumlist}  
\item[\bf (S0),(S1),(S2):] 
a \(J_\ell^\infty(z_j)\)-holomorphic map \(w_\ell^\infty\colon \R^2 \to M_\ell\) for \(\ell = 0,1,2\)
via the rescaling \eqref{eq:S1_rescale} for $\ell=1$ and analogously for $\ell = 0,2$,
which can be completed to a nonconstant pseudoholomorphic sphere \({\overline{w}_\ell^\infty\colon S^2 \to M_\ell}\);
\item[\bf (D01):] 
a \((-J_0^\infty(s_j,0))\times J_1^\infty(s_j,0)\)-holomorphic map \(w_{01}^\infty\colon \H  \to M_0^-\times M_1\) with \(w_{01}^\infty(\partial\H )\subset L_{01}\) via the rescaling \eqref{eq:D01_rescale},
which can be extended to a nonconstant pseudoholomorphic disk \(\overline{w}_{01}^\infty\colon (D,\partial D) \to (M_0^-\times M_1,L_{01})\);
\item[\bf (D12):] 
a \((-J_1^\infty(s_j,0))\times J_2^\infty(s_j,0)\)-holomorphic map \(w_{12}^\infty\colon \H  \to M_1^-\times M_2\) with \(w_{12}^\infty(\partial\H )\subset L_{12}\) via a rescaling analogous to \eqref{eq:D01_rescale},
which can be extended to a nonconstant pseudoholomorphic disk \(\overline{w}_{12}^\infty\colon (D,\partial D) \to (M_1^-\times M_2,L_{12})\);
\item[\bf (E012):] 
a nonconstant
\((J_0^\infty(s_j,0), J_1^\infty(s_j,0), J_2^\infty(s_j,0))\)-holomorphic figure eight bubble between \(L_{01}\) and \(L_{12}\), as in Definition~\ref{def:8}, via the rescaling \eqref{eq:E012_rescale};
\item[\bf (D02):] 
a nonconstant \((J_0^\infty(s_j,0),J_2^\infty(s_j,0))\)-holomorphic squashed eight bubble with seam in $L_{01} \times_{M_1} L_{12}$, as in Definition~\ref{def:8}, via the rescaling \eqref{eq:D02_rescale}.
\end{enumlist}
\end{enumerate}
\end{theorem}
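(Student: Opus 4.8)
\section*{Proof proposal}

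The plan is to run the usual Gromov-compactness scheme --- finitely many bubbling points, smooth convergence on the complement, and a soft-rescaling analysis at each bubbling point --- while carrying the width functions \(f^\nu\) along throughout. I would first fix \(\hbar > 0\), the uniform lower bound on all bubbling energies for the sequence \((J_0^\nu,J_1^\nu,J_2^\nu)\) and its limit, furnished by Lemma~\ref{lem:hbar}, and define the bubbling set \(Z\) to consist of the points \(z \in (-\rho,\rho)^2\) with \(\limsup_{\nu\to\infty} \int_{B_r(z)} \tfrac12|\d\ul{v}^\nu|^2 \geq \tfrac12\hbar\) for every \(r > 0\); the uniform energy bound \(E\) then forces \(Z\) to be finite. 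The engine of the proof is a single soft-rescaling construction: whenever \(|\d\ul{v}^\nu|\) blows up along a sequence \(p^\nu \to p\), the Hofer trick produces rescaling centers \(\tilde p^\nu\), scales \(\eps^\nu \to 0\) and radii \(R^\nu \to \infty\) with \(\eps^\nu R^\nu \to 0\), \(|\d\ul{v}^\nu(\tilde p^\nu)| = 1/\eps^\nu\), and \(|\d\ul{v}^\nu| \leq 2/\eps^\nu\) on \(B_{\eps^\nu R^\nu}(\tilde p^\nu)\), and I would show that the rescaled tuple \(\ul{v}^\nu(\tilde p^\nu + \eps^\nu\,\cdot\,)\), or a suitable variant adapted to the middle strip, subconverges in \(\cC^\infty_\loc\) to a nonconstant bubble of one of the types (S0)--(D02), concentrating energy at least \(\hbar\) near \(p\).

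Establishing that construction is the heart of the matter and proceeds by cases on the position of \(\tilde p^\nu\) relative to the two seams and, when \(p \in (-\rho,\rho)\times\{0\}\), on the ratio \(\eps^\nu/f^\nu(\tilde s^\nu)\), which after passing to a subsequence has a limit in \([0,\infty]\). If \(\tilde p^\nu\) stays at rescaled distance tending to \(\infty\) from both seams, rescaling by \(\eps^\nu\) yields a nonconstant finite-energy \(J_\ell^\infty(p)\)-holomorphic plane \(\R^2 \to M_\ell\), which caps off to a sphere by removal of singularities: case (S0)/(S1)/(S2), with (S1) occurring precisely when the collapsing \(M_1\)-strip fills out the plane. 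If the rescaled distance to exactly one seam, say the \(L_{01}\)-seam, stays bounded while the other diverges, I fold \((v_0^\nu, v_1^\nu)\) across that seam into a single \((-J_0^\nu)\times J_1^\nu\)-holomorphic map, rescale, and obtain a nonconstant half-plane map to \(M_0^-\times M_1\) with boundary on the \emph{embedded} Lagrangian \(L_{01}\) --- a disk bubble, case (D01) (and symmetrically (D12)). If \(p\in(-\rho,\rho)\times\{0\}\) and \(\eps^\nu/f^\nu(\tilde s^\nu)\) converges to a finite positive limit, I rescale instead by \(2f^\nu(\tilde s^\nu)\); here the obedience of \(f^\nu\) (Definition~\ref{def:obedience}) is exactly what is needed, since the rescaled width functions \(s\mapsto f^\nu(\tilde s^\nu + 2f^\nu(\tilde s^\nu)s)/(2f^\nu(\tilde s^\nu))\) have value \(\tfrac12\) at \(0\) and all derivatives tending to \(0\) (by the \(\cC^k\)-bounds relative to \(\min f^\nu\)), so the rescaled squiggly strips converge \(\cC^\infty_\loc\) to the straight strip \(\R\times[-\tfrac12,\tfrac12]\), and ordinary quilt elliptic bootstrapping with the embedded seam conditions delivers a nonconstant figure eight bubble, case (E012). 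Finally, if \(p\in(-\rho,\rho)\times\{0\}\) and \(\eps^\nu/f^\nu(\tilde s^\nu)\to\infty\), then after rescaling by \(\eps^\nu\) and straightening the squiggly middle strip --- via biholomorphisms built from the holomorphic extensions \(F^\nu\), whose pulled-back complex structures are symmetric and \(\cC^\infty\)-converge to one that is \(\cC^0\)-close to \(i\) --- the hypotheses of Theorem~\ref{thm:nonfoldedstripshrink} hold, since the normalized middle width tends to \(0\) with uniformly bounded gradients; hence the shifted maps converge \(\cC^\infty_\loc\) to a degenerate strip quilt for \(L_{01}\times_{M_1}L_{12}\), that is, a squashed eight bubble (D02), whose nonconstancy comes from the gradient lower bound at \(\tilde p^\nu\) together with the last assertion of Theorem~\ref{thm:nonfoldedstripshrink}, noting that if the blow-up concentrates in \(M_1\) then discreteness of the fibers of \(\pi_{02}\) (immersed composition) forces \(w_0\) or \(w_2\) to be nonconstant as well. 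In every case the nonconstant limit has energy at least \(\min\{\hbar_{S^2},\hbar_{D^2},\hbar_8,\hbar_{L_{01}\circ L_{12}}\}\geq\hbar\), and by the energy identity this much energy is captured in \(B_{\eps^\nu R^\nu}(\tilde p^\nu)\), proving the concentration bound with \(r^\nu := \eps^\nu R^\nu \to 0\).

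With this construction in hand, the three assertions follow. A uniform gradient bound \(\sup_\nu\sup_K|\d\ul{v}^\nu| < \infty\) holds on every compact \(K \subset (-\rho,\rho)^2\less Z\), since otherwise the construction would force a point of \(K\) into \(Z\). Given this bound, \(\cC^\infty_\loc\)-convergence off \(Z\) follows from interior elliptic bootstrapping inside the patches, boundary bootstrapping with the embedded seam conditions \(L_{01},L_{12}\) wherever the middle strip is not collapsing, and --- on a neighborhood of each point of \(\bigl((-\rho,\rho)\times\{0\}\bigr)\less Z\) --- from Theorem~\ref{thm:nonfoldedstripshrink} applied to the straightened strips; patching these local limits and passing to a diagonal subsequence produces the degenerate strip quilt \(\ul{v}^\infty\) for \(L_{01}\times_{M_1}L_{12}\) of assertion~(1), whose singular set \(S \subseteq Z\cap\bigl((-\rho,\rho)\times\{0\}\bigr)\) is the discontinuity set of \(v_1^\infty\) by Remark~\ref{rem:singset}. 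Assertion~(2) is the concentration already noted, and assertion~(3) is the soft-rescaling construction applied at each \(z_j\).

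The step I expect to be the main obstacle is making the soft-rescaling case analysis genuinely exhaustive and uniform: verifying, after subsequence extraction, that the regimes \(\eps^\nu\ll f^\nu\), \(\eps^\nu\sim f^\nu\), \(\eps^\nu\gg f^\nu\), combined with the rescaled distances to the two seams, really cover all possibilities, and that in the two borderline regimes the rescaled domains converge so that the limits are the \emph{straight} figure eight and squashed eight bubbles of Definition~\ref{def:8} rather than squiggly variants. The squashed-eight regime is the delicate new point: one must show that the energy of the collapsing \(M_1\)-strip survives in the limit as a bona fide seam condition in the immersed composition, which is exactly what the width-independent estimates of \cite{b:singularity}, packaged as Theorem~\ref{thm:nonfoldedstripshrink}, provide. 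A secondary technical point is the construction of the straightening biholomorphisms with uniform \(\cC^k\)-control, which is precisely why Definition~\ref{def:obedience} demands the \(\cC^\infty\)-convergent holomorphic extensions \(F^\nu\).
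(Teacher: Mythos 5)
Your outline tracks the paper's proof very closely: Hofer trick to locate blow-up centers, a case split based on the rescaled position of the middle strip, sphere/disk cases via folding, a direct appeal to Theorem~\ref{thm:nonfoldedstripshrink} for the \((D02)\) and base-convergence steps, Lemma~\ref{lem:hbar} for finiteness of \(Z\), and Lemma~\ref{lem:rescaledwidth} (implicitly) for controlling the rescaled width functions. Two places are, however, treated more optimistically than the paper warrants.

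\textbf{The case split.} The paper classifies blow-ups by the two rescaled seam heights \(\tau_\pm^\nu := R^\nu(\pm f^\nu(s^\nu)-t^\nu)\), which after subsequence extraction converge in \(\R\cup\{\pm\infty\}\) with \(\tau_-^\infty\le\tau_+^\infty\); the seven mutually exclusive cases are read off from the pair \((\tau_-^\infty,\tau_+^\infty)\). Your split instead uses ``rescaled distances to both seams'' together with the ratio \(\ep^\nu/f^\nu\). These are not equivalent criteria unless read as a decision tree: for instance \(\ep^\nu/f^\nu\to\infty\) with \(z_j\) on the zero line is also consistent with both \(\tau_\pm^\nu\to+\infty\) (a sphere in \(M_0\)), so the \((D02)\) criterion ``\(\ep^\nu/f^\nu\to\infty\) and \(p\) on the seam'' as stated is not disjoint from the sphere case. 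You flag exhaustiveness as a concern, which is the right instinct; the clean invariant is \((\tau_-^\infty,\tau_+^\infty)\) itself.

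\textbf{Boundary convergence in \((E012)\) and the straightening in \((D02)\).} For \((E012)\), a conformal rescaling by \(2f^\nu(\tilde s^\nu)\) keeps \(j=i\) but leaves the seams squiggly (the rescaled width converges \(\cC^\infty_\loc\) to the constant \(\tfrac12\), but for each fixed \(\nu\) the boundary is still a curve). Treating this as ``ordinary quilt elliptic bootstrapping with the embedded seam conditions'' omits the actual work: the paper passes near each seam to the holomorphic coordinate change \(\psi^\nu(z)=s^\nu+2f^\nu(s^\nu)z - iF^\nu(s^\nu+2f^\nu(s^\nu)z)\), which straightens exactly one seam, folds across it to get embedded Lagrangian boundary conditions, and then transports the resulting \(\cC^\infty_\loc\) convergence back with Lemma~\ref{lem:rescaledwidth}, which controls \((\psi^\nu)^{-1}\circ\phi^\nu\). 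Neither \(\psi^\nu\) nor any biholomorphism straightens both seams simultaneously, so you cannot reach the straight strip by a conformal change of coordinates.

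For \((D02)\), your plan to straighten ``via biholomorphisms built from \(F^\nu\), whose pulled-back complex structures are symmetric and \(\cC^\infty\)-close to \(i\)'' is internally inconsistent: a biholomorphism pulls \(i\) back to \(i\), not to a merely symmetric structure. What the paper actually does is use the non-conformal rescaling \(\phi^\nu(s,t) = \bigl(s^\nu+s/R^\nu,\, \tfrac{\wt f^\nu(s)}{\wt f^\nu(0)}\tfrac t{R^\nu}\bigr)\), which makes both seams straight at the cost of a nonstandard \(j^\nu\); the content of Lemma~\ref{lem:rescaledwidth} is that this \(j^\nu\) is symmetric and \(\cC^\infty_\loc\)-close to \(i\), which is precisely the format Theorem~\ref{thm:nonfoldedstripshrink} accepts. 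So the idea is right but the mechanism is the non-conformal \(\phi^\nu\), not \(\psi^\nu\) or any biholomorphism. Once these two points are fixed, your outline matches the paper's proof.
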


\begin{remark}
If the composition \(L_{01} \circ L_{12}\) is cleanly immersed, then \cite[Thm.~2.2]{b:singularity} guarantees a continuous removal of singularity for figure eight and squashed eight bubbles, in particular for the bubbles produced in cases (E012) and (D02) of Theorem~\ref{thm:rescale}.
This allows us to partially characterize the singular set $S\subset\R$ in (1) at which $\ol v_1$ does not extend continuously, and hence to which $v_0,v_2$ may not extend smoothly: A necessary condition for a bubbling point $z_j$ to lie in $S$ is that a sheet-switching bubble can be found by rescaling near $z_j$, i.e.\ a squashed eight bubble whose boundary arc on \(L_{01} \circ L_{12}\) does not lift to \(L_{01} \times_{M_1} L_{12}\), or a figure eight bubble with $\lim_{s\to-\infty} w^\infty_1(s,-) \neq \lim_{s\to+\infty} w^\infty_1(s,-)$.
However, this is not a sufficient condition, since a tree involving several sheet-switching bubbles at $z_j$ could allow continuous extension of $\ol v_1$ to $z_j$.
\end{remark}

The proof of Theorem~\ref{thm:rescale} will take up the rest of this section.
Our first goal will be to find a subsequence and blow-up points so that (2) and (3) hold together with the following bound on energy densities:
\begin{enumerate}
\item[(0)]
The energy densities \(|\d\ul{v}^\nu|\) are uniformly bounded away from the bubbling points, that is for each compact subset \(K\subset (-\rho,\rho)^2 \less\{z_1,\ldots z_N\}\) we have
$
\sup_{\nu\in\N} \; \bigl\| \d\ul{v}^{\nu} \bigr\|_{L^\infty( K \cap \, (-\rho,\rho)^2 ) }
< \infty $.
\end{enumerate}
Then we will show that (1) follows from Theorem~\ref{thm:nonfoldedstripshrink}.

Suppose that we have already found a subsequence (for convenience again indexed by \(\nu\in\N\)) and some blow-up points \(z_1,\ldots,z_N\in(-\rho,\rho)^2\) such that (2) holds and we have established (3) at each such point.
Now either (0) holds, too, or we can pass to a further subsequence and find another blow-up point \(z_{N+1}=(s_{N+1},t_{N+1})=\lim_{\nu\to\infty}(s^\nu, t^\nu)\in (-\rho,\rho)^2\less\{z_1,\ldots z_N\}\) such that \(\lim_{\nu \to \infty} |\d \ul{v}^\nu(s^\nu,t^\nu)| = \infty\).
We can apply the Hofer trick \cite[Lemma 4.3.4]{ms:jh}\footnote{Note that the Hofer trick applies directly to each function $f(x)=|\d\ul{v}^\nu(x)|$ for $x=(s,t)$, although it is only upper semi-continuous.
In the proof, continuity is used only to exclude $f(x_n)\to\infty$ for a convergent sequence $x_n\to x_\infty$.
For a bounded upper semi-continuous function $f$, we still have $\limsup f(x_n) \leq f(x_\infty) <\infty$, excluding this divergence.
} 
to vary the points \(z^\nu=(s^\nu,t^\nu)\) slightly (not changing their limit) and find \(\eps^\nu\to 0\) such that we have
\begin{align} \label{eq:vbounds}
\sup_{(s,t)\in B_{\eps^\nu}(s^\nu,t^\nu)} |\d \ul{v}^\nu(s,t)| \leq 2  |\d \ul{v}^\nu(s^\nu,t^\nu)| \eqqcolon 2 R^\nu, \qquad 
R^\nu \eps^\nu \to\infty .
\end{align}
We will essentially rescale by \(R^\nu\) around \((s^\nu,t^\nu)\) to obtain different types of bubbles, depending on where the lines \(\{t= \pm f^\nu(s)\}\) get mapped under the rescaling.
We denote by \({\tau_\pm^\nu \coloneqq R^\nu(\pm f^\nu(s^\nu)-t^\nu)}\) the \(t\)-coordinate of the preimage of the point \((s^\nu, \pm f^\nu(s^\nu))\) under the rescaling
$z \mapsto z^\nu + z/R^\nu$.
After passing to a subsequence, we may assume that \(\tau_\pm^\nu\) converges to \(\tau_\pm^\infty \in \R \cup \{\pm \infty\}\) with \(\tau_-^\infty \leq \tau_+^\infty\).
Then exactly one of the following cases holds:
\begin{enumlist}  
\item[\bf (S0)]
$\tau_-^\infty = \tau_+^\infty= \infty$
\item[\bf (S1)]
$\tau_-^\infty = -\infty$ and $\tau_+^\infty = \infty$
\item[\bf (S2)]
$\tau_-^\infty = \tau_+^\infty = -\infty$
\item[\bf (D01)]
$\tau_-^\infty \in \R$ and \(\tau_+^\infty = \infty\)
\item[\bf (D12)]
$\tau_-^\nu = -\infty$ and $\tau_+^\nu \in \R$
\item[\bf (E012)] $\tau_\pm^\infty \in \R$ and \(\tau_-^\infty < \tau_+^\infty\)
\item[\bf (D02)] $\tau_-^\infty = \tau_+^\infty \in \R$
\end{enumlist}

Below, we will for each case specify the rescaled maps and establish their convergence to one of the bubble types in (3) as well as prove the energy concentration in (2).
Thus in all cases we will have proven (2) and (3) for the new blow-up point $z_{N+1}$, and after adding this point we will either have (0) satisfied or be able to find another blow-up point.
Since $\hbar>0$ by Lemma~\ref{lem:hbar}, we will find at most $E/\hbar$ such blow-up points in this iteration before (0) holds.

Out of the seven blow-up scenarios just listed, the only case where our rescaling argument will be significantly different from standard rescaling arguments is (D02), in which we will need to appeal to the new analysis of Theorem~\ref{thm:nonfoldedstripshrink}.
The rescaling argument in the cases (D01), (D12), (E012) is essentially the same as the standard process of ``bubbling off a disk'', since locally we can fold across the seam to obtain a pseudoholomorphic map to a product manifold.

Before we rescale to obtain bubbles, we record the key properties of the rescaled width function.

\begin{lemma} \label{lem:rescaled-width}
Given a sequence $\bigl(f^\nu\colon [-\rho,\rho]\to(0,\rho/2]\bigr)$
of real-analytic functions shrinking obediently to zero, shifts $s^\nu{\to}s^\infty$, and rescaling factors $\alpha^\nu{\to}\infty$, the rescaled width functions
 \(\wt f^\nu(s) \coloneqq \alpha^\nu f^\nu(s^\nu + s/\alpha^\nu)\)
satisfy \(\cC^\infty_\loc(\R)\) convergence
$$
\wt f^\nu - \wt f^\nu(0) \;\underset{\nu\to\infty}{\longrightarrow} \; 0 , \qquad
\wt f^\nu / \wt f^\nu(0)  \;\underset{\nu\to\infty}{\longrightarrow} \; 1.
$$
Moreover, let \(F^\nu\) be the extension of \(f^\nu\) from Definition~\ref{def:obedience}, identify \((s,t)\in \R^2\) with \(z=s+it \in \C \), and set
\begin{align*}
\phi^\nu(s,t) &\coloneqq \bigl(s^\nu + s/\alpha^\nu, 2f^\nu\bigl(s^\nu + s/\alpha^\nu\bigr)\, t\bigr), \\
\psi^\nu(z) &\coloneqq s^\nu + z/\alpha^\nu - i F^\nu\bigl( s^\nu + z/\alpha^\nu \bigr).
\end{align*}
Then for any \(R>0\) and \(\nu\) sufficiently large, the maps \((\psi^\nu)^{-1} \circ \phi^\nu\) are well defined on \(B_R(0)\).
In the special case \(\alpha^\nu \coloneqq (2f^\nu(s^\nu))^{-1}\), the maps \((\psi^\nu)^{-1} \circ \phi^\nu\) converge \(\cC^\infty_\loc(\R^2,\R^2)\) to \((s,t) \mapsto (s, t+1/2)\).
\end{lemma}

\noindent We defer the proof to p.~\pageref{proof:rescaled-width} of Appendix~\ref{sec:squiggly}.

Continuing with the proof of Theorem~\ref{thm:rescale},
the nontrivial bubbles claimed in (3) are now obtained as follows:
\medskip

\begin{itemlist}
\item[\bf (S1):]
We will obtain a {\bf sphere bubble in \(\mathbf{M_1}\)} by rescaling
\begin{align}
\label{eq:S1_rescale}
w_1^\nu(s,t)\coloneqq v^\nu_1(s^\nu + \tfrac s {R^\nu} ,  t^\nu + \tfrac t {R^\nu}  )
\end{align}
to define maps \begin{align*}
w_1^\nu\colon U_1^\nu \coloneqq \bigl\{ (s,t) \: | \: -R^\nu(\rho + s^\nu) \leq s \leq R^\nu(\rho - s^\nu), \: -\wt f^\nu(s) - R^\nu t^\nu \leq t \leq \wt f^\nu(s) - R^\nu t^\nu \bigr\} \;\longrightarrow\; M_1,
\end{align*}
where $\wt f^\nu$ is the rescaled width function $\wt f^\nu(s) \coloneqq R^\nu f^\nu(s^\nu + s/R^\nu)$.
The map \(w_1^\nu\) is pseudoholomorphic with respect to \(\wt J_1^\nu(t)\coloneqq J_1^\nu(s^\nu + s/R^\nu , t^\nu +  t/R^\nu)\); due to the convergence \(R^\nu \to \infty\), these almost complex structures converge in \(\cC^\infty_\loc(\R^2)\) as \(\nu \to \infty\) to the constant almost complex structure \(\wt J_1^\infty\coloneqq J_1^\infty(s^\infty,t^\infty)\).
By construction, the maps \(w_1^\nu\) satisfy both upper and lower gradient bounds: \begin{gather}
|\d w_1^\nu(0) | = \tfrac1{R^\nu} |\d  v_1^\nu(s^\nu, t^\nu)| = \tfrac 1 {R^\nu}|\d \ul v(s^\nu, t^\nu)| = 1 , \label{eq:w1gradbound} \\ 
 \sup_{(s,t)\in B_{R^\nu\eps^\nu}(0)} |\d w_1^\nu(s,t)| 
\leq \sup_{(s,t)\in B_{\eps^\nu}(s^\nu,t^\nu)} \tfrac1{R^\nu} |\d \ul{v}^\nu(s,t)|  
 \leq 2 , \nonumber
\end{gather}
where the second equality in the top line follows for large \(\nu\) from the assumption \(\tau_\pm^\nu \to \pm\infty\).
The containment \(s^\nu \to s^\infty \in (-\rho,\rho)\) implies that the left resp.\ right bounds \(R^\nu(\mp \rho - s^\nu)\) of \(U_1^\nu\) have limits \(-\infty\) resp.\ \(\infty\); furthermore, the assumption \(\tau_\pm^\infty = \pm\infty\) and Lemma~\ref{lem:rescaled-width} implies that the lower resp.\ upper bounds \(-\wt f^\nu(s) - R^\nu t^\nu\) resp.\ \(\wt f^\nu(s) - R^\nu t^\nu\) of \(U_1^\nu\) converge \(\cC^\infty_\loc\) to \(-\infty\) resp.\ \(\infty\).
Hence the maps \(w_1^\nu\) are defined with uniformly bounded differential on balls centered at \(0\) of radii tending to infinity. 
Standard compactness for pseudoholomorphic maps (e.g.\ \cite[Appendix B]{ms:jh}\footnote{\label{foot:bounds}
If noncompact symplectic manifolds are involved, then one needs to establish \(\cC^0\)-bounds on the maps before ``standard Gromov compactness'' can be quoted.
}) implies that a subsequence, still denoted by \((w_1^\nu)\), converges \(\cC^\infty_\loc\) to a \(\wt J_1^\infty\)-holomorphic map \(w_1^\infty\) defined on \(\R^2\).
Its energy is bounded by \(E\), so after removing the singularity (using \cite[Theorem~4.1.2(i)]{ms:jh}) we obtain a \(\wt J_1^\infty\)-holomorphic sphere \(\ol w_1^\infty\colon S^2 \to M_1\), which is nonconstant by \eqref{eq:w1gradbound}.
Rescaling invariance of the energy and $\cC^\infty_\loc$-convergence imply energy concentration:
\begin{align*}
\liminf_{\nu\to\infty} \int_{B_{\eps^\nu}(s^\nu,t^\nu)} \tfrac 1 2\bigl| \d\ul{v}^\nu\bigr|^2
\geq \liminf_{\nu\to\infty} \int_{B_{\eps^\nu}(s^\nu, t^\nu)} v_1^{\nu\;*} \om_1 =\liminf_{\nu\to\infty} \int_{B_{R^\nu\eps^\nu}(0) \cap U_1^\nu} w_1^{\nu\;*} \om_1
\;\;&\geq\; \int_{\R^2} w_1^{\infty\;*} \om_1 \\
&\geq\; \hbar_{S^2} > 0.
\end{align*}

\item[\bf (S0,S2):] In complete analogy to (S1), rescaling by \(w_\ell^\nu(s,t) \coloneqq v_\ell^\nu(s^\nu + s/R^\nu, t^\nu + t/R^\nu)\) yields a nonconstant pseudoholomorphic {\bf sphere in \(\mathbf{M_\ell}\)} and with energy concentration of at least \(\hbar_{S^2}\).

\item[\bf (D01):]
We will obtain a {\bf disk bubble in \(\mathbf{ M_0^-\times M_1}\) with boundary on \(\mathbf{L_{01}}\)} by rescaling
\begin{align}
\label{eq:D01_rescale}
w_\ell^\nu(z) \coloneqq v_\ell^\nu \Bigl( s^\nu + \tfrac s {R^\nu}, -f^\nu\bigl(s^\nu + \tfrac s {R^\nu}\bigr) + \tfrac t {R^\nu} \Bigr)
\end{align}
to define maps \begin{align*}
w_0^\nu&\colon \bigl\{(s,t) \:|\: -R^\nu(\rho+s^\nu) \leq R^\nu(\rho - s^\nu), \: R^\nu\bigl(-\rho + f^\nu(s^\nu + \tfrac s {R^\nu})\bigr) \leq t \leq 0\bigr\} \;\longrightarrow\; M_0, \\
w_1^\nu&\colon \bigl\{(s,t) \:|\: -R^\nu(\rho+s^\nu) \leq R^\nu(\rho - s^\nu), \: 0 \leq t \leq 2R^\nu f^\nu(s^\nu + \tfrac s {R^\nu})\bigr\} \;\longrightarrow\; M_1.
\end{align*}
In the case of straight seams $t=\pm f^\nu(s) = \pm \delta^\nu$ the rescaled maps easily pair to maps $w_{01}^{\nu}: (s,t)\mapsto \bigl(w_0^\nu(-s,t),w_1^\nu(s,t)\bigr)\in  M_0^-\times M_1$ defined on increasing domains \(B_{r^\nu}(0) \cap \H\) with $r^\nu\to\infty$ in half space, with boundary values in $L_{01}$, which by standard arguments converge and extend to a pseudoholomorphic disk.
The squiggly seams require an easier version of the arguments in (E012) to establish convergence \(w_0^\nu \to w_0^\infty\), \(w_1^\nu \to w_1^\infty\) in \(\cC_\loc^\infty(-\H)\) resp.\ \(\cC_\loc^\infty(\H)\) to nonconstant \(J_\ell^\infty(s^\infty,0)\)-holomorphic maps satisfying the Lagrangian seam condition \begin{align*}
(w_0^\infty(s,0), w_1^\infty(s,0)) \in L_{01} \quad \forall s \in \R.
\end{align*}
Then \(w_{01}^\infty(s,t) \coloneqq (w_0^\infty(s,-t), w_1^\infty(s,t))\colon \H \to M_0^-\times M_1\) is a nonconstant \(\wt J_{01}^\infty \coloneqq (-J_0^\infty(s^\infty,0)) \times J_1^\infty(s^\infty,0)\)-holomorphic map with \(w_{01}^\infty(\partial\H) \subset L_{01}\).
Its energy is bounded by \(E\), so after removing the singularity (using e.g.\ \cite[Theorem~4.1.2(ii)]{ms:jh}) we obtain a nonconstant \(\wt J_{01}^\infty\)-holomorphic disk \(\ol w_{01}^\infty\colon D^2 \to M_0^-\times M_1\).
Energy quantization in the case of straight seams is given by
\begin{align*}
\liminf_{\nu\to\infty} \int_{B_{\eps^\nu}(s^\nu,t^\nu)} \tfrac 1 2\bigl| \d\ul{v}^\nu\bigr|^2
\geq \liminf_{\nu\to\infty} \sum_{\ell \in \{0,1\}} \int_{B_{\eps^\nu}(s^\nu,t^\nu)} v_\ell^{\nu\;*} \om_\ell &= \liminf_{\nu\to\infty} \int_{B_{r^\nu}(0)\cap\H} w_{01}^{\nu\;*} \bigl( (-\om_0) \oplus \om_1 \bigr) \\
&\hspace{0.5in} \geq \int w_{01}^{\infty\;*} \bigl( (-\om_0) \oplus \om_1 \bigr) \geq \hbar_{D^2},
\end{align*}
and in the general case just requires more refined choices of domains as in (E012).

\item[\bf (D12):] In complete analogy to (D01), rescaling yields a nonconstant pseudoholomorphic {\bf disk in \\\(\mathbf{ M_1^-\times M_2}\) with boundary on \(\mathbf{ L_{12}}\)} and energy concentration of at least \(\hbar_{D^2}\).

\item[\bf (E012):] We will obtain a {\bf figure eight bubble between \(\mathbf{L_{01}}\) and \(\mathbf{L_{12}}\)} by a rescaling that in the case of straight middle strips of width $2 f^\nu \equiv \delta^\nu \to 0$ amounts to $w_\ell^\nu(s,t) = v_\ell^\nu \bigl( s^\nu + \delta^\nu s, \delta^\nu t \bigr)$ for $\ell=0,1,2$.
In the general case of squiggly strips, we straighten out the strip by using an $s$-dependent rescaling factor in the $t$ variable:
\begin{align}\label{eq:E012_rescale}
w_\ell^\nu(s,t) \coloneqq v_\ell^\nu \bigl(\phi^\nu(s,t) \bigr), \qquad \phi^\nu(s,t) \coloneqq \Bigl(s^\nu + 2f^\nu(s^\nu)s, 2f^\nu\bigl(s^\nu + 2f^\nu(s^\nu)s\bigr)\, t\Bigr).
\end{align}
Note that $\phi^\nu$ is a diffeomorphism between open subsets of $\R^2$ since $f^\nu>0$, and that it pulls back the seams $\cS_\pm =\bigl\{(x,y)\in\R^2 \,|\, y=\pm f^\nu(x)\bigr\}$ to straight seams $(\phi^\nu)^{-1}(\cS_\pm) = \bigl\{(s,t) \in \R^2\,|\, t=\pm\tfrac 12 \bigr\}$ as in Definition~\ref{def:8} of the figure eight bubble.
Moreover, the rescaled quilt maps $(w_\ell^\nu)_{\ell=0,1,2}$ have the total domain
\begin{align*}
(\phi^\nu)^{-1}\bigl( (-\rho,\rho)^2 \bigr)
= \Bigl\{ (s,t)\in\R^2 \,\Big| \, \tfrac{-\rho-s^\nu}{2f^\nu(s^\nu)} < s < \tfrac{\rho - s^\nu}{2f^\nu(s^\nu)}, \: |t| < \tfrac{\rho}{2f^\nu(s^\nu + 2f^\nu(s^\nu)s)} \Bigr\}
\end{align*}
Since \(s^\nu \to s^\infty \in (-\rho,\rho)\) and \(f^\nu \sr{\cC^\infty}{\lra} 0\), we can find a sequence of radii \(r^\nu \to \infty\) so that these domains contain the balls $B_{r^\nu}(0)$.
In the case of straight seams the maximal radii are $r^\nu =  (\rho-|s^\nu|)/\delta^\nu$, but we may also choose smaller radii \(r^\nu \to \infty\) so that in addition $r^\nu\delta^\nu\to 0$.
In general we choose \(r^\nu \to \infty\) so that $B_{r^\nu}(0)\subset (\phi^\nu)^{-1}\bigl( (-\rho,\rho)^2 \bigr)$ and
\begin{equation} \label{a}
 r^\nu \max_{s \in [-\rho,\rho]} \bigl( f^\nu(s) + (f^\nu)'(s)\bigr) \underset{\nu\to\infty}{\longrightarrow} 0 .
\end{equation}
Finally, we wish to choose $r^\nu\to\infty$ such that in addition the inclusion \(\phi^\nu\bigl(B_{r^\nu}(0)\bigr) \subset B_{\eps^\nu}(s^\nu,t^\nu)\) ensures that the gradient bounds \eqref{eq:vbounds} transfer to the rescaled maps. 
For that purpose first note that for sufficiently large $\nu\in\N$ from \eqref{a} we also obtain the estimate
\begin{align}\label{c}
\max_{s \in [-r^\nu,r^\nu]} 2f^\nu(s^\nu + 2f^\nu(s^\nu)s) \;\leq\; 3f^\nu(s^\nu) .
\end{align}
Indeed, for \(s \in [-r^\nu, r^\nu]\) and $\nu$ sufficiently large such that $r^\nu \|(f^\nu)'\|_{\cC^0([-\rho,\rho])}\leq \frac 14$ we have
\begin{align*}
f^\nu(s^\nu + 2f^\nu(s^\nu)s) &\leq f^\nu(s^\nu) + \int_0^s 2f^\nu(s^\nu)(f^\nu)'(s^\nu + 2f^\nu(s^\nu)s) \,ds \\
&\leq f^\nu(s^\nu)\bigl(1 + 2r^\nu \|(f^\nu)'\|_{\cC^0([-\rho,\rho])}\bigr) 
\;\leq\; \tfrac 32 f^\nu(s^\nu).
\end{align*}
Next, for \((s,t) \in B_{r^\nu}(0)\) we obtain
\begin{align*}
\left|\bigl(s^\nu + 2f^\nu(s^\nu)s, 2f^\nu(s^\nu + 2f^\nu(s^\nu)s)t\bigr) - (s^\nu, t^\nu)\right| &\leq \left|(2f^\nu(s^\nu)s, 2f^\nu(s^\nu + 2f^\nu(s^\nu)s)t)\right| + |(0,t^\nu)| \\
&\leq 3r^\nu f^\nu(s^\nu) + |t^\nu| \\
&= \frac {3 r^\nu(\tau_+^\nu - \tau_-^\nu) + |\tau_+^\nu + \tau_-^\nu|} {2 R^\nu\eps^\nu} \; \eps^\nu
\end{align*}
from \eqref{c} and the identities
\begin{equation}\label{eq:tau}
 f^\nu(s^\nu) = \frac{\tau_+^\nu - \tau_-^\nu}{2 R^\nu},
\qquad
t^\nu = - \frac{\tau_+^\nu + \tau_-^\nu}{2 R^\nu}.
\end{equation}
Thus to obtain the inclusion \(\phi^\nu\bigl(B_{r^\nu}(0)\bigr) \subset B_{\eps^\nu}(s^\nu,t^\nu)\) for large $\nu$ it suffices to replace the above $r^\nu$ by a possibly smaller sequence $r^\nu\to\infty$ so that $3 r^\nu(\tau_+^\nu - \tau_-^\nu) + |\tau_+^\nu + \tau_-^\nu| < 2 R^\nu\eps^\nu$ for large $\nu$.
This is possible since \(R^\nu\eps^\nu \to \infty\) and \(\tau_\pm^\nu \to \tau_\pm^\infty \in \R\).
So from now on, after dropping finitely many terms from the sequence, we are working with a sequence of rescaled quilt maps \eqref{eq:E012_rescale} together with a sequence \(r^\nu \to \infty\) so that we have the inequalities \eqref{a}, \eqref{c}, and the inclusions
\begin{equation}\label{bd}
\phi^\nu\bigl(B_{r^\nu}(0)\bigr) \subset B_{\eps^\nu}(s^\nu,t^\nu) \cap (-\rho,\rho)^2 .
\end{equation}
Thus, restricting the maps from \eqref{eq:E012_rescale} to the balls $B_{r^\nu}(0)$ of increasing radius amounts to considering the quilt map  \(w_\ell^\nu\colon W_\ell^\nu \to M_\ell\) with the domains 
\begin{align*}
W_0^\nu \coloneqq B_{r^\nu}(0) \cap \bigl(\R \times (-\infty, -\tfrac 1 2]\bigr), \:\:\: W_1^\nu \coloneqq B_{r^\nu}(0) \cap \bigl(\R \times [-\tfrac 1 2, \tfrac 1 2]\bigr), \:\:\: W_2^\nu \coloneqq B_{r^\nu}(0) \cap \bigl(\R \times [\tfrac 1 2, \infty)\bigr).
\end{align*}
These maps are \((\wt J_\ell^\nu, j^\nu)\)-holomorphic, where \(\wt J_\ell^\nu =  J_\ell^\nu \circ \phi^\nu\) are the almost complex structures on $M_\ell$ with appropriately rescaled domain dependence, and \(j^\nu\) is the complex structure 
\begin{align*}
j^\nu(s,t) &\coloneqq \bigl(\d\phi^\nu(s,t)\bigr)^{-1}\circ j_0 \circ \d\phi^\nu(s,t) \\
&= \left(\begin{array}{ll}
2f^\nu(s^\nu) & 0 \\
4f^\nu(s^\nu)(f^\nu)'(s^\nu + 2f^\nu(s^\nu)s)\,t & 2f^\nu(s^\nu + 2f^\nu(s^\nu)s)
\end{array}\right)^{-1}\left(\begin{array}{ll}
0 & -1 \\
1 & 0
\end{array}\right)\times \\
&\hspace{2in} \times\left(\begin{array}{ll}
2f^\nu(s^\nu) & 0 \\
4f^\nu(s^\nu)(f^\nu)'(s^\nu + 2f^\nu(s^\nu)s)\,t & 2f^\nu(s^\nu + 2f^\nu(s^\nu)s)
\end{array}\right) \\
&= \left(\!\!\begin{array}{ll}
(2f^\nu(s^\nu))^{-1}& 0 \\
- \frac{(f^\nu)'(\ldots)\,t}{f^\nu(s^\nu + \ldots)} & \bigl(2 f^\nu(s^\nu + \ldots)\bigr)^{-1}
\end{array}\!\!\right)\!\!
\left(\begin{array}{ll}
- 4f^\nu(s^\nu)(f^\nu)'(s^\nu + \ldots)\,t & - 2f^\nu(s^\nu + 2f^\nu(s^\nu)s) \\
 2f^\nu(s^\nu) & 0
\end{array}\!\!\right)
\\
&= \left(\begin{array}{cc}
-2t(f^\nu)'(s^\nu + 2f^\nu(s^\nu)s) & -\tfrac {f(s^\nu+2f^\nu(s^\nu)s)} {f^\nu(s^\nu)} \\
\tfrac {f^\nu(s^\nu)\bigl( 4t^2(f^\nu)'(s^\nu+2f^\nu(s^\nu)s)^2 + 1\bigr)} {f^\nu(s^\nu+2f^\nu(s^\nu)s)} & 2t(f^\nu)'(s^\nu+2f^\nu(s^\nu)s)
\end{array}\right) \\
&= \left( \begin{array}{cc}
-\tfrac {t(\wt f^\nu)'(s)} {\wt f^\nu(0)} & -\tfrac {\wt f^\nu(s)} {\wt f^\nu(0)} \\
\tfrac {t^2(\wt f^\nu)'(s)^2 + \wt f^\nu(0)^2} {\wt f^\nu(0)\wt f^\nu(s)} & \tfrac {t(\wt f^\nu)'(s)} {\wt f^\nu(0)}
\end{array} \right),
\end{align*}
where we abbreviate
$\wt f^\nu(s) \coloneqq f^\nu(s^\nu + 2f^\nu(s^\nu)s) / 2f^\nu(s^\nu)$.
Note that Lemma~\ref{lem:rescaled-width} with \(\alpha^\nu \coloneqq (2f^\nu(s^\nu))^{-1}\) implies \(j^\nu \to i\) in \(\cC^\infty_\loc\), and the almost complex structures also converge \(\wt J_\ell^\nu \to J_\ell^\infty(s^\infty,0)\) in \(\cC^\infty_\loc\) since $\phi^\nu(s,t) \to 0$ for any fixed $(s,t)$.
Moreover, the maps \(w_\ell^\nu\) satisfy the Lagrangian seam conditions \begin{align*}
(w_0^\nu(s, -\tfrac 1 2 ), w_1^\nu(s, -\tfrac 1 2 )) \in L_{01}, \quad (w_1^\nu(s,\tfrac 1 2), w_2^\nu(s, \tfrac 1 2)) \in L_{12} \qquad \forall \: s \in (-r^\nu, r^\nu).
\end{align*}
The gradient blowup at $(s^\nu,t^\nu)$ in \eqref{eq:vbounds} translates into lower bounds on the gradient \(|\d w^\nu| \coloneqq \bigl(|\d w_0^\nu| + |\d w_1^\nu| + |\d w_2^\nu|\bigr)^{1/2}\) at $t=\wt t^\nu \coloneqq \tfrac{t^\nu}{2f^\nu(s^\nu)} = \tfrac {-\tau_+^\nu - \tau_-^\nu} {2(\tau_+^\nu - \tau_-^\nu)} \to \tfrac {-\tau_+^\infty-\tau_-^\infty}{2(\tau_+^\infty-\tau_-^\infty)} \in \R$, since $\phi^\nu(0,\tfrac{t^\nu}{2f^\nu(s^\nu)})=(s^\nu,t^\nu)$ and hence
\begin{align*} 
\bigl|\d \ul w^\nu( 0, \wt t^\nu) \bigr|^2 
&= 
\sum_\ell \bigl|2f^\nu(s^\nu)\partial_sv^\nu_\ell(s^\nu,t^\nu) + 4\wt t^\nu f^\nu(s^\nu)(f^\nu)'(s^\nu)\partial_tv^\nu_\ell(s^\nu,t^\nu)\bigr|^2 + \bigl|2f^\nu(s^\nu)\partial_tv_\ell^\nu(s^\nu,t^\nu)\bigr|^2 \nonumber \\
&\geq 
3\bigl|\d\ul v^\nu(s^\nu,t^\nu)\bigr|^2, \quad \nu \gg 0.
\end{align*}
Now by $t^\nu\to t^\infty$, the obedient convergence $f^\nu \Rightarrow 0$ in Definition~\ref{def:obedience}, and \eqref{eq:tau} we obtain a nonzero lower bound for sufficiently large $\nu$,
\begin{equation} \label{eq:E012bounds}
\bigl|\d \ul w^\nu( 0, \wt t^\nu) \bigr|^2 
\;\geq \;
2 f^\nu(s^\nu)^2|\d\ul v^\nu(s^\nu,t^\nu)|^2 
\;=\; 
\tfrac 1 2 (\tau^\nu_+ - \tau^\nu_-)^2
\;\geq\;  
\tfrac 1 4(\tau^\infty_+ - \tau^\infty_-)^2 > 0.
\end{equation}
Next we use \eqref{a}--\eqref{bd} to transfer the upper bound in \eqref{eq:vbounds}
to the gradient of the rescaled maps for sufficiently large $\nu$,
\begin{align}
&\sup_{(s,t) \in B_{r^\nu}(0)} |\d\ul w^\nu(s,t)|^2  \nonumber\\
&\qquad= 
\sup_{(s,t) \in B_{r^\nu}(0)} \sum_\ell 
\begin{array}{r} 
\bigl| 2f^\nu(s^\nu)\partial_sv_\ell^\nu(\phi^\nu(s,t)) + 4tf^\nu(s^\nu)(f^\nu)'(s^\nu+2f^\nu(s^\nu)s)\partial_tv_\ell^\nu(\phi^\nu(s,t))\bigr|^2  \nonumber\\
+ \phantom{\Big|} \bigl|2f^\nu(s^\nu + 2f^\nu(s^\nu)s)\partial_tv_\ell^\nu(\phi^\nu(s,t))\bigr|^2
\end{array} 
 \nonumber\\
&\qquad\leq 
\Bigl( \bigl( 2f^\nu(s^\nu) + 4 r^\nu f^\nu(s^\nu) \max_{s\in[-\rho,\rho]} |(f^\nu)'(s)| \bigr)^2
+ \bigl(3f^\nu(s^\nu)\bigr)^2 
\Bigr)
\sup_{(x,y) \in B_{\eps^\nu}(s^\nu,t^\nu)} |\d\ul v(x,y)|^2  \nonumber\\
&\qquad \leq 
18f^\nu(s^\nu)^2
\sup_{(x,y) \in B_{\eps^\nu}(s^\nu,t^\nu)} |\d\ul v(x,y)|^2 
\;\leq\;18 (\tau_+^\infty - \tau_-^\infty)^2 .  
\label{eq:wbound}
\end{align}
Using these gradient bounds (and the compact boundary conditions in the case of noncompact symplectic manifolds), standard Gromov compactness asserts that after passing to a subsequence, \(w_0^\nu\) resp.\ \(w_1^\nu\) resp.\ \(w_2^\nu\) converge \(\cC^\infty_\loc\) on the interior of \(\R\times(-\infty, -1/2]\) resp.\ \(\R \times [-1/2,1/2]\) resp.\ \(\R \times [1/2,\infty)\).

To obtain convergence up to the seams $\R\times\{\pm 1/2\}$, we will first prove convergence of somewhat differently rescaled maps.
More precisely, to prove convergence of \(w_0^\nu, w_1^\nu\) near the \(L_{01}\)-seam \(\R \times \{-1/2\}\) we consider the maps
$$
u_0^\nu\colon \; U_0^\nu \coloneqq (-r^\nu, r^\nu) \times (-1/2, 0]  \; \to \; M_0,
\qquad
u_1^\nu\colon \; U_1^\nu \coloneqq (-r^\nu, r^\nu) \times [0, 1/2) \; \to \; M_1
$$
given by rescaling $u_\ell^\nu(s,t) \coloneqq v_\ell^\nu\bigl(\psi^\nu(s+it)\bigr)$ with the holomorphic map
$$
\psi^\nu(z) \coloneqq s^\nu + 2f^\nu(s^\nu)z - i F^\nu\bigl( s^\nu + 2f^\nu(s^\nu)z \bigr),
$$
where we identify \((s,t)\in \R^2\) with \(z=s+it \in \C \), and \(F^\nu\) is the extension of \(f^\nu\) from Definition~\ref{def:obedience}.
To see that $\psi^\nu$ is well-defined on \(U_0^\nu \cup U_1^\nu\) for sufficiently large \(\nu\), despite $f^\nu$ resp.\ $F^\nu$ only being defined on \([-\rho,\rho]\) resp.\ \([-\rho,\rho]^2\), note that \(s^\nu \to s^\infty \in (-\rho,\rho)\) and \(r^\nu f^\nu(s^\nu) \to 0\) by \eqref{a}.
To ensure that \(u_\ell^\nu\) is well-defined for large \(\nu\) and $\ell=0,1$ we moreover need to verify that $\psi^\nu(U_\ell^\nu)$ lies in the domain of $v_\ell^\nu$.
Indeed, firstly we have \(\psi^\nu(U_0^\nu\cup U_1^\nu) \subset (-\rho,\rho)^2\) for large \(\nu\) by \(s^\nu \to s^\infty \in (-\rho,\rho)\), \eqref{a}, and \(F^\nu \sr{\cC^\infty}{\lra} 0\).
Secondly, the bounds required by the seams are 
\begin{align}
\im \psi^\nu(s+it) &\leq -f^\nu(\re \psi^\nu(s+it)) \;\quad\forall\: s+it \in U_0^\nu,
\label{eq:U0seambound} \\
\bigl| \im \psi^\nu(s+it) \bigr| &\leq f^\nu(\re \psi^\nu(s+it)) \qquad\forall\: s+it \in U_1^\nu
\label{eq:U1seambound}
\end{align}
for large \(\nu\).
For that purpose we rewrite
\begin{align*}
& \im \psi^\nu(s+it) \pm f^\nu(\re \psi^\nu(s+it)) \\
&=
2tf^\nu(s^\nu) - \re F^\nu\bigl(s^\nu+2f^\nu(s^\nu)(s+it)\bigr) \pm f^\nu\Bigl(s^\nu + 2 f^\nu(s^\nu) s
+ \im F^\nu \bigl(s^\nu+2f^\nu(s^\nu)(s+it)\bigr) \Bigr) \\
%
%
&= (2t-1\pm1)f^\nu\bigl(s^\nu+2f^\nu(s^\nu)s\bigr) + E_1^\nu(s,t) - \re E_2^\nu(s,t) \pm E_3^\nu(s,t).
\end{align*}
with
\begin{align*}
E_1^\nu(s,t) &= 2t \bigl( f^\nu(s^\nu) - f^\nu\bigl(s^\nu+2f^\nu(s^\nu)s\bigr) \bigr), \\
E_2^\nu(s,t) &= F^\nu\bigl(s^\nu+2f^\nu(s^\nu)(s+it)\bigr)-F^\nu\bigl(s^\nu+2f^\nu(s^\nu)s\bigr) , \\
E_3^\nu(s,t) &= f^\nu\Bigl(s^\nu+2f^\nu(s^\nu)s +\im F^\nu\bigl(s^\nu+2f^\nu(s^\nu)(s+it)\bigr)\Bigr) - f^\nu\bigl(s^\nu+2f^\nu(s^\nu)s\bigr).
\end{align*}
We can bound \(E_1^\nu,E_2^\nu,E_3^\nu\) for large \(\nu\) and \((s,t)\in U_0^\nu\cup U_1^\nu\subset B_{2r^\nu}(0)\), using the obedient convergence \(f^\nu \Rightarrow 0\) from Definition~\ref{def:obedience},
\begin{align}
|E_1^\nu(s,t)| &\leq 4t \|(f^\nu)'\|_{\cC^0}f^\nu(s^\nu)s \leq 8 t r^\nu\|(f^\nu)'\|_{\cC^0} C_0 f^\nu\bigl(s^\nu+2f^\nu(s^\nu)s\bigr),
\end{align}
\begin{align}
|E_2^\nu(s,t)| &\leq  \|DF^\nu\|_{\cC^0} 2t f^\nu(s^\nu) \leq 2t \|DF^\nu\|_{\cC^0} C_0 f^\nu\bigl(s^\nu+2f^\nu(s^\nu)s\bigr),
\end{align}
\begin{align}
|E_3^\nu(s,t)| 
&\leq \|(f^\nu)'\|_{\cC^0}\bigl|\im F^\nu(s^\nu+2f^\nu(s^\nu)(s+it))\bigr| \label{eq:E3bound} \\
&= \|(f^\nu)'\|_{\cC^0}\Bigl|\im \bigl( F^\nu(s^\nu+2f^\nu(s^\nu)(s+it)) - F^\nu(s^\nu+2f^\nu(s^\nu)s) \bigr)\Bigr| \nonumber \\
&\leq \|(f^\nu)'\|_{\cC^0} \|DF^\nu\|_{\cC^0} 2 t f^\nu(s^\nu) \nonumber \\
&\leq 2t \|(f^\nu)'\|_{\cC^0} \|DF^\nu\|_{\cC^0} C_0 f^\nu\bigl(s^\nu+2f^\nu(s^\nu)s\bigr). 
\nonumber
\end{align}
Then by \eqref{a} and  \(F^\nu \sr{\cC^\infty}{\lra} 0\) we obtain for sufficiently large $\nu$\begin{align*}
\bigl| \im \psi^\nu(s+it) \pm f^\nu(\re \psi^\nu(s+it)) 
- (2t-1\pm1)f^\nu\bigl(s^\nu+2f^\nu(s^\nu)s\bigr) \bigr| 
\leq tf^\nu\bigl(s^\nu+2f^\nu(s^\nu)s\bigr).
\end{align*}
To check \eqref{eq:U0seambound} from this, recall that  \(t \in (-1/2,0]\) on $U^\nu_0$ so that
$$
\im \psi^\nu(s+it) + f^\nu(\re \psi^\nu(s+it)) \leq 
t f^\nu\bigl(s^\nu+2f^\nu(s^\nu)s\bigr)  \leq 0 .
$$
Similarly, on $U^\nu_1$ we have  \(t \in [0,1/2)\) so that \eqref{eq:U1seambound} follows from
\begin{align*}
f^\nu(\re\psi^\nu(s+it)) \pm \im\psi^\nu(s+it)
\geq
\bigl(1 - t \pm (2t-1)\bigr) f^\nu\bigl(s^\nu+2f^\nu(s^\nu)s\bigr) \geq 0
\end{align*}
since $f^\nu> 0$, $1-t-2t+1 = 2 - 3t > 0$ and $1-t+2t-1 = t\geq 0$.

Now that \(u_0^\nu, u_1^\nu \) are well-defined, note that the advantage of this rescaling is that the resulting maps are pseudoholomorphic with respect to the standard complex structure \(i\) on their domains (viewed as subsets of $\C$).
On the other hand it straightens out only one seam,
\begin{align*}
\psi^\nu \bigl( (-r^\nu,r^\nu)\times\{0\} \bigr) 
& =\bigl\{ \bigl(
s^\nu + 2f^\nu(s^\nu)s , - f^\nu\bigl( s^\nu + 2f^\nu(s^\nu)s \bigr)
\bigr) \,\big|\, s\in(-r^\nu,r^\nu) \bigr\} \\
&\subset
\bigl\{(x,y)\in\R^2 \,|\, y=- f^\nu(x)\bigr\}
\;=\;\cS_- ,
\end{align*}
so that we obtain the Lagrangian seam condition 
\begin{align*}
(u_0^\nu(s, 0 ), u_1^\nu(s, 0 )) \in L_{01} \qquad \forall \: s \in (-r^\nu, r^\nu),
\end{align*}
but the $L_{12}$-condition would hold on the curved seam $(\psi^\nu)^{-1}(\cS_+)$.
However, we use this rescaling only to prove convergence near the \(L_{01}\)-seam, and to prove convergence of \(w_1^\nu, w_2^\nu\) near the \(L_{12}\)-seam would use the rescaling $z\mapsto s^\nu + 2f^\nu(s^\nu)z + i F^\nu\bigl( s^\nu + 2f^\nu(s^\nu)z \bigr)$.

More precisely, we will below prove $\cC^\infty_\loc$-convergence of $u_\ell^\nu$ near $\R\times\{0\}$ since this yields control of the maps of interest $w_\ell^\nu$ for $\ell=0,1$ near the seam $\R\times\{-\frac 12\}$. 
Indeed, $w_\ell^\nu = u_\ell^\nu \circ ((\psi^\nu)^{-1} \circ \phi^\nu)$ is obtained from $u_\ell^\nu$ by composition with the local diffeomorphisms $(\psi^\nu)^{-1} \circ \phi^\nu$ which by Lemma~\ref{lem:rescaled-width} converge $\cC^\infty_\loc$ to a shift map. 
On the other hand, to establish convergence of the $u_\ell^\nu$, we can start from local uniform gradient bounds given by \eqref{eq:wbound} via the reparametrization with $(\psi^\nu)^{-1} \circ \phi^\nu$.
Further, we can work with the ``folded'' maps \(u_{01}^\nu\colon (-r^\nu, r^\nu) \times [0, 1/2) \to M_0^- \times M_1\) given by \(u_{01}^\nu(s,t) \coloneqq (u_0^\nu(s,-t), u_1^\nu(s,t))\).
These satisfy the Lagrangian boundary condition \(u_{01}^\nu(s,0) \in L_{01}\) for \(s \in (-r^\nu,r^\nu)\) and are pseudoholomorphic with respect to \(K_{01}^\n(s,t) \coloneqq \bigl( - J_0^\nu \bigl( \psi^\nu(s-it) \bigr)\bigr) \times J_1^\nu \bigl( \psi^\nu(s+it) \bigr) \), which converges to \(K_{01}^\infty \coloneqq \bigl( -J_0^\infty(s^\infty,0)\bigr) \times J_1^\infty(s^\infty,0)\) in  \(\cC^\infty_\loc\).
Now standard compactness for pseudoholomorphic maps implies that after passing to a subsequence, \((u_{01}^\nu)\) converges \(\cC^\infty_\loc\) on \(\R\times[0,1/2)\), and as discussed above this implies \(\cC^\infty_\loc\)-convergence for the corresponding subsequence of \(w_0^\nu, w_1^\nu\) near the \(L_{01}\)-seam \(\R \times \{-1/2\}\).  

An analogous argument shows that \(w_1^\nu, w_2^\nu\) converge \(\cC^\infty_\loc\) near \(\R \times \{1/2\}\), so we have now shown that \(w_0^\nu, w_1^\nu, w_2^\nu\) converge \(\cC^\infty_\loc\) everywhere to a \((J_0^\infty(s^\infty,0),J_1^\infty(s^\infty,0),J_2^\infty(s^\infty,0))\)-holomorphic figure eight bubble \(\ul w^\infty\) between \(L_{01}\) and \(L_{12}\).  
The lower gradient bound in \eqref{eq:E012bounds} implies that \(\ul w^\infty\) is nonconstant and hence has nonzero energy, hence by Lemma~\ref{lem:hbar} has energy at least \(\hbar_8>0\). 
Finally, rescaling invariance and \(\cC^\infty_\loc\)-convergence imply energy concentration of at least \(\hbar_8\) at \((s_{N+1},0)\):
\begin{align*}
\liminf_{\nu\to\infty} \int_{B_{\eps^\nu}(s^\nu,t^\nu)} \tfrac 1 2\bigl|\d\ul v^\nu\bigr|^2 \geq \liminf_{\nu\to\infty} \sum_{\ell\in\{0,1,2\}} \int_{U_\ell^\nu} {w_\ell^\nu}^*\omega_\ell \geq \int_{\R^2} \bigl|\d\ul w^\nu\bigr|^2 \geq \hbar_8>0.
\end{align*}

\item[\bf (D02):]
We will obtain a {\bf squashed eight bubble in \(\mathbf{ M_{02}}\) with boundary on \(\mathbf{L_{01}\circ L_{12}}\)} by rescaling
\begin{align}
\label{eq:D02_rescale}
w_\ell^\nu(s, t) \coloneqq v_\ell^\nu \Bigl( s^\nu + \tfrac s {R^\nu}, \tfrac {\wt f^\nu(s)} {\wt f^\nu(0)} \tfrac t {R^\nu} \Bigr),
\end{align}
where we have set \(\wt f^\nu(s) \coloneqq R^\nu f^\nu(s^\nu + s/R^\nu)\), to obtain maps 
\begin{align*}
w_0^\nu&\colon 
U_0^\nu\coloneqq
\bigl\{ (s,t) \: | \: -R^\nu(\rho + s^\nu) \leq s \leq R^\nu(\rho - s^\nu), \: -\tfrac {\rho R^\nu \wt f^\nu(0)} {\wt f^\nu(s)} \leq t \leq -\wt f^\nu(0) \bigr\} \to M_0, \\
w_1^\nu&\colon 
U_1^\nu\coloneqq\bigl\{ (s,t) \: | \: -R^\nu(\rho + s^\nu) \leq s \leq R^\nu(\rho - s^\nu), \: -\wt f^\nu(0) \leq t \leq \wt f^\nu(0) \bigr\} \to M_1, \\
w_2^\nu&\colon 
U_2^\nu\coloneqq\bigl\{ (s,t) \: | \: -R^\nu(\rho + s^\nu) \leq s \leq R^\nu(\rho - s^\nu), \: \wt f^\nu(0) \leq t \leq \tfrac {\rho R^\nu \wt f^\nu(0)} {\wt f^\nu(s)} \bigr\} \to M_2.
\end{align*}
Each \(w_\ell^\nu\) is pseudoholomorphic with respect to \((\wt J_\ell^\nu(s,t), j^\nu)\), where the almost complex structure \(\wt J_\ell^\nu(s,t) \coloneqq J_\ell^\nu(s^\nu + s/R^\nu, \wt f^\nu(s) t / \wt f^\nu(0) R^\nu )\) converges \(\cC^\infty_\loc\) to \(\wt J_\ell^\infty \coloneqq J_\ell^\infty(s_\infty,0)\) and the symmetric\footnote{As explained in the beginning of \S\ref{sec:squiggly}, a complex structure is said to be {\bf symmetric} if it is equivariant with respect to conjugation.
This is a necessary hypothesis for Thm.~\ref{thm:nonfoldedstripshrink}.} complex structure \(j^\nu\) on \(U_0^\nu \cup U_1^\nu \cup U_2^\nu \subset \R^2\),
\begin{align*}
j^\nu(s,t) \coloneqq \left( \begin{array}{cc}
-\tfrac {t(\wt f^\nu)'(s)} {\wt f^\nu(0)} & -\tfrac {\wt f^\nu(s)} {\wt f^\nu(0)} \\
\tfrac {t^2(\wt f^\nu)'(s)^2 + \wt f^\nu(0)^2} {\wt f^\nu(0)\wt f^\nu(s)} & \tfrac {t(\wt f^\nu)'(s)} {\wt f^\nu(0)}
\end{array} \right)
\end{align*}
converges \(\cC^\infty_\loc\) to the standard complex structure by Lemma~\ref{lem:rescaled-width} 
with \(\alpha^\nu \coloneqq R^\nu\).
The maps \(w_\ell^\nu\) also satisfy the Lagrangian seam conditions
\begin{gather*}
(w_0^\nu(s, -\wt f^\nu(0)), w_1^\nu(s, -\wt f^\nu(0))) \in L_{01}, \qquad (w_1^\nu(s, \wt f^\nu(0)), w_2^\nu(s, \wt f^\nu(0))) \in L_{12}
\end{gather*}
for all \(s\) in \((-R^\nu(\rho + s^\nu), R^\nu(\rho - s^\nu))\).
By the \(\cC^\infty_\loc\)-convergence \(\wt f^\nu(s) / \wt f^\nu(0) \to 1\) proven in Lemma~\ref{lem:rescaled-width}, we may choose a subsequence and \(r^\nu \to \infty\) with \(r^\nu \leq R^\nu \eps^\nu / 4\) such that we have
\begin{align} \label{eq:D02-tilde-f-bound}
\|\wt f^\nu(s) / \wt f^\nu(0)\|_{\cC^1((-r^\nu,r^\nu))} \leq 2
\end{align}
for \(\nu\) sufficiently large.
This allows us to translate \eqref{eq:vbounds} into upper and lower bounds on the energy density \(| \d \ul w^\nu |\) for sufficiently large $\nu$,
\begin{gather*}
| \d\ul w^\nu(0, R^\nu t^\nu) | 
\geq
\tfrac 1 {2R^\nu} | \d\ul v^\nu(s^\nu, t^\nu) | \geq \tfrac 1 2, \\
\sup_{(s,t) \in B_{r^\nu}(0)} | \d \ul w^\nu(s,t) | \leq 
\sup_{(s,t) \in B_{\eps^\nu}(s^\nu,t^\nu)} 
\tfrac 4 {R^\nu}| \d \ul v^\nu(s,t) | \leq 8.
\end{gather*}
Here we estimated 
$\tfrac1{R^\nu} | \partial_s v_\ell^\nu|- \tfrac{|(\wt f^\nu)'|}{R^\nu\wt f^\nu(0)}| \partial_t v_\ell^\nu| \leq \bigl| \partial_s w_\ell^\nu \bigr| \leq \left(\tfrac1{R^\nu} + \tfrac{|(\wt f^\nu)'|}{R^\nu\wt f^\nu(0)}\right) | \d v_\ell^\nu|$, used the identity $\bigl| \partial_t w_\ell^\nu \bigr| = \tfrac{\wt f^\nu}{R^\nu\wt f^\nu(0)} | \partial_t v_\ell^\nu|$, and need to check that $(s,t)\in B_{r^\nu}(0)$ implies $\bigl( s^\nu + \tfrac s {R^\nu}, \tfrac {\wt f^\nu(s)} {\wt f^\nu(0)} \tfrac t {R^\nu} \bigr) \in B_{\eps^\nu}(s^\nu,t^\nu)$ for sufficiently large $\nu$.
Indeed, \eqref{eq:D02-tilde-f-bound} yields:
\begin{align} \label{eq:D02-containment}
\bigl| \bigl( \tfrac s {R^\nu} \,,\, \tfrac {\wt f^\nu(s)} {\wt f^\nu(0)} \tfrac t {R^\nu} - t^\nu \bigr)\bigr|  /  \eps^\nu 
\;\leq\;
\tfrac {2r^\nu}{R^\nu\eps^\nu} + \tfrac{|t^\nu|}{\eps^\nu}
\;\leq\;
\tfrac 12 + \tfrac{\tau_+^\nu + \tau_-^\nu}{2 R^\nu \eps^\nu} ,
\end{align}
where \(R^\nu \eps^\nu \to \infty\) and $\tau_\pm^\nu\to\tau_\pm^\infty \in \R$.
Next we consider the limiting behaviour of the domain \(U_0^\nu \cup U_1^\nu \cup U_2^\nu\).
Its straight boundaries diverge \(-R^\nu(\rho + s^\nu) \to -\infty\) resp.\  \(R^\nu(\rho - s^\nu)\to \infty\) since $s^\nu\to s^\infty \in (-\rho,\rho)$.
The functions  \(\pm \rho R^\nu \wt f^\nu(0) / \wt f^\nu(s)\) of the upper/lower boundary converge \(\cC^\infty_\loc\) to \(\infty\) resp.\ \(-\infty\) by Lemma~\ref{lem:rescaled-width} with \(\alpha^\nu \coloneqq R^\nu\) and $R^\nu\to\infty$.
Finally, the straight seams \(\{ t = \pm \wt f^\nu(0)\} \) shrink to a single seam $\{t=0\}$ since we have \(\wt f^\nu(0)= \tfrac 1 2(\tau_+^\nu - \tau_-^\nu) \to 0 \).

Now we may apply Theorem~\ref{thm:nonfoldedstripshrink} to this strip shrinking situation to deduce that after passing to a subsequence, \((w_0^\nu(s, t - \wt f^\nu(0))\) resp.\ \((w_2^\nu(s, t + \wt f^\nu(0))\) converge in \(\cC^\infty_\loc(-\H)\) resp.\ \(\cC^\infty_\loc(\H)\) to \(\wt J_0^\infty\)- resp.\ \(\wt J_2^\infty\)-holomorphic maps \(w_0^\infty\) resp.\ \(w_2^\infty\), and that \((w_1^\nu|_{t=0})\) converges in \(\cC^\infty_\loc(\R)\) to a smooth map \(w_1^\infty\).
Furthermore, at least one of \(w_0^\infty, w_2^\infty\) is nonconstant, and the generalized seam condition \((w_0^\infty(s,0), w_1^\infty(s), w_1^\infty(s), w_2^\infty(s,0)) \in L_{01} \times_{M_1} L_{12}\) is satisfied for \(s \in \R\), so that \((w_0^\infty, w_2^\infty)\) is a nonconstant squashed eight bubble with boundary on \(L_{01} \circ L_{12}\), with energy bounded below by \(\hbar_{L_{01} \circ L_{12}}\).
Finally, rescaling invariance, \(\cC^\infty_\loc\)-convergence, and the containment proven in \eqref{eq:D02-containment} imply energy concentration: 
\begin{align*}
\liminf_{\nu \to \infty} \int_{B_{\eps^\nu}(s^\nu, t^\nu)} \tfrac 1 2| \d\ul v^\nu|^2 \geq \liminf_{\nu \to \infty} \sum_{\ell \in \{0,2\}} \int_{B_{\eps^\nu}(s^\nu, t^\nu)} v_\ell^{\nu\;*}\om_\ell &\geq \liminf_{\nu \to \infty} \sum_{\ell \in \{0,2\}}
\int_{B_{r^\nu}(0)} 
w_\ell^{\nu\;*}\omega_\ell \\
&\geq \sum_{\ell \in \{0,2\}} \int w_\ell^{\infty\;*}\omega_\ell \geq \hbar_{L_{01} \circ L_{12}}.
\end{align*}
\end{itemlist}

\medskip
\noindent
This ends the construction of a nontrivial bubble in the this last case, (D02), and thus finishes the iterative construction of a subsequence and blow-up points so that (0), (2), and (3) hold.
To establish the $\cC^\infty_\loc$-convergence on the complement of the blow-up points claimed in (1) we will apply Theorem~\ref{thm:nonfoldedstripshrink} to quilted domains that make up rectangles in \((-\rho,\rho)^2 \less \{z_1, \ldots, z_N\}\).

Standard elliptic regularity implies that \(v_0^\nu(s, t - f^\nu(s))\) resp.\ \(v_2^\nu(s, t + f^\nu(s))\) converge \(\cC^\infty_\loc\) on the interior of their domains \((-\rho,\rho) \times (-\rho,0] \less Z\) resp.\ \((-\rho,\rho) \times [0, \rho) \less Z\).
To extend this convergence to the boundary and to establish convergence of \(v_1^\nu(s,0)\), fix a point \((\sigma,0)\) in \((-\rho,\rho) \times \{0\} \less Z\), and define three maps by rescaling \(v_\ell^\nu\) for $\ell=0,1,2$ and straightening the seams: 
\begin{align*}
w_\ell^\nu(s,t) \coloneqq v_\ell^\nu \bigl( \sigma + s, \tfrac {f^\nu(s + \sigma)} {f^\nu(\sigma)}t \bigr).
\end{align*}
For \(r > 0\) sufficiently small, these maps form a squiggly strip quilt of size \((f^\nu(\sigma), r)\), which is \((\wt J_0^\nu, \wt J_1^\nu, \wt J_2^\nu, j^\nu)\)-holomorphic for \(\wt J_\ell^\nu\) and \(j^\nu\) the pulled-back almost complex and complex structures 
\begin{align*}
\wt J_\ell^\nu(s,t) \coloneqq J_\ell^\nu( \sigma + s, \tfrac {f^\nu(s + \sigma)} {f^\nu(\sigma)}t), \qquad j^\nu(s,t) \coloneqq \left(\begin{array}{cc}
-\tfrac {(f^\nu)'(s + \sigma)} {f^\nu(\sigma)}t & -\tfrac {f^\nu(s + \sigma)} {f^\nu(\sigma)} \\
\tfrac { (f^\nu)'(s + \sigma)^2 t^2 + f^\nu(\sigma)^2} {f^\nu(\sigma)f^\nu(s + \sigma)} & \tfrac {(f^\nu)'(s + \sigma)} {f^\nu(\sigma)} t
\end{array}\right).
\end{align*}
The obedient shrinking \(f^\nu \Rightarrow 0\) and the Arz\`{e}la--Ascoli theorem guarantee that after passing to a subsequence\footnote{For those choices of \((f^\nu)\) that arise from natural geometric situations -- e.g.\ from the figure eight bubble -- we expect convergence directly, i.e.\ without passing to a subsequence.},
\(f^\nu(s + \sigma) / f^\nu(\sigma)\) converges in \(\cC^\infty_\loc\); therefore \(\wt J^\nu_\ell\) and \(j^\nu\) converge in \(\cC^\infty_\loc\) to almost complex and complex structures \(\wt J_\ell^\infty\) and \(j^\infty\).
As long as \(r\) was chosen to be small enough, the bound \(\|j^\infty - i \|_{\cC^0} \leq \eps\) holds (where \(\eps\) is the constant appearing in Thm~\ref{thm:nonfoldedstripshrink}), so Theorem~\ref{thm:nonfoldedstripshrink} implies that \(w_0^\nu(s, t - f^\nu(\sigma))\), \(w_1^\nu(s, 0)\), \(w_2^\nu(s, t + f^\nu(\sigma))\) converge \(\cC^\infty_\loc\) to smooth maps \(w_0^\infty, w_1^\infty, w_2^\infty\) that satisfy a generalized seam condition in \(L_{01} \times_{M_1} L_{12}\).
Since \(f^\nu( s + \sigma) / f^\nu(\sigma)\) converges \(\cC^\infty_\loc\), we may conclude that \(v_0^\nu(s, t - f^\nu(\sigma))\), \(v_1^\nu(s,0)\), \(v_2^\nu(s, t + f^\nu(\sigma))\) converge \(\cC^\infty_\loc\) on a neighborhood of \((\sigma,0)\), and the limit maps satisfy a generalized seam condition in \(L_{01} \times_{M_1} L_{12}\).
We established convergence away from \((-\rho,\rho) \times \{0\}\) earlier, so we have now proven (1).
This finishes the proof of Theorem~\ref{thm:rescale}.

\begin{remark} \label{rmk:noncompact}
The purpose of this remark is to discuss the minimal assumptions which allow one to apply Theorem~\ref{thm:rescale} to symplectic manifolds $M_0, M_1, M_2$ that are not compact.
If the Lagrangian correspondences are compact, then -- unlike the ``bounded geometry'' assumptions in \cite{isom} -- we do not explicitly require uniform bounds on metrics and almost complex structures (which were used in \cite{isom} to show energy concentration in a sequence of pseudoholomorphic maps with unbounded gradient).
Instead we need to ensure convergence of maps which result from rescaling near a blow-up point of the gradients of a sequence of pseudoholomorphic maps.
If the rescaled domains contain boundary or seam conditions, then compactness of the Lagrangians implies \(\cC^0\)-bounds so that the rest of our arguments applies in a precompact neighborhood of the Lagrangians. 
If the rescaled domains do not contain boundary or seam conditions, or if the Lagrangians in the boundary or seam conditions are noncompact, then \(\cC^0\)-bounds must be obtained a priori from some special properties of the symplectic manifold or Lagrangians.

Note that the \(\cC^0\)-bounds are not merely technical complications -- in general, nontrivial parts of sequences of pseudoholomorphic curves can and will escape to infinity, at best yielding punctures and SFT-type buildings in the limit.
One way to achieve \(\cC^0\)-bounds would be to work with completed Liouville domains and Lagrangians which are cylindrical at infinity, as in Abouzaid--Seidel's definition of the wrapped Fukaya category in \cite{as:wrapped}.

Footnotes \ref{foot:bubblesinstrata} and \ref{foot:bounds} point out the main instances where the specific geometry would have to be considered when dealing with noncompact manifolds.
When working with noncompact Lagrangians, one would have to make additional assumptions -- such as ``bounded geometry'' for symplectic manifolds and Lagrangians -- to guarantee uniformity of the elliptic estimates in \cite{b:singularity}.
\end{remark}

\section{Boundary strata and algebraic consequences of strip-shrinking moduli spaces} \label{s:propaganda}

The purpose of this section is to analyze the expected boundary stratification of strip-shrinking moduli spaces and from this predict the algebraic consequences of figure eight bubbling.
While we make an effort to provide convincing arguments for the more surprising features, this part of our exposition will be rather cavalier -- aiming only to explain the rough form of what we expect to be able to make rigorous.
In particular, all Floer cohomology groups will be considered as ungraded and with coefficients in the Novikov field defined over \(\Z_2\), which should be valid as long as sphere bubbling can be avoided.
We ultimately expect a fully-fledged graded theory with Novikov coefficients defined over \(\Q\) (resp.\ over \(\Z\) in the absence of sphere bubbling).

\subsection{Boundary stratifications and their algebraic consequences} \label{ss:boundary}
One of the intuitions in the treatment of pseudoholomorphic curve moduli spaces is that sphere bubbling is ``codimension 2'' and disk bubbling is ``codimension 1''.
We give a more rigorous statement of this intuition in the polyfold framework and explain its algebraic consequences in Remark~\ref{rmk:codim} below, and will argue that, in a similarly imprecise sense, figure eight bubbling is ``codimension 0'' within the ``zero-width boundary components'' of quilt moduli spaces involving a strip or annulus of varying width.

\begin{remark}[\bf Codimension and algebraic contribution of sphere and disk bubbles] \label{rmk:codim}
In the polyfold setup for pseudoholomorphic curve moduli spaces (whose blueprint is given in \cite{hwz:gw} at the example of Gromov--Witten moduli spaces), the compactified moduli space is cut out of the ambient polyfold by a (polyfold notion of) Fredholm section, which arises from the Cauchy--Riemann operator.
Transversality while preserving compactness can then be achieved by adding a small, compact (possibly multivalued) perturbation, which is supported near the unperturbed moduli space.
This equips the perturbed moduli space with the structure of a compact (possibly weighted branched) manifold.
For expositions of this theory see e.g.\ \cite{Hofer,hwz:fred2,theguide}.)

An important feature of the ambient space is that there is a sensible notion of ``corner index'' -- a nonnegative integer associated to each point in the polyfold, so that the points of corner index 0 resp.\ 1 resp.\ $\ge 2$ should be thought of as the interior resp.\ smooth part of boundary resp.\ corner stratification.
The transverse perturbation can be chosen compatibly with corner index, so that a Fredholm index 0 section gives rise to a perturbed moduli space lying in the interior of the polyfold, and a Fredholm index \(1\) section gives rise to a perturbed moduli space whose boundary is given by the intersection of the zero set with the smooth (corner index \(1\)) part of the polyfold boundary.
The index 0 components of the perturbed moduli space are then typically used to define an algebraic structure, whose algebraic relations arise from the fact that the Fredholm index \(1\) component has nullhomologous boundary corresponding to algebraic compositions of Fredholm index \(0\) contributions.
More precisely, the sum over the algebraic contributions of the boundary points
is zero, and since these boundary points are given by the zero set of the section restricted to the smooth part of the boundary of the polyfold, the algebraic relations are given by a sum over these boundary strata.

It turns out that interior nodes do not contribute to the corner index, and in particular that curves with sphere bubbles and no other nodes are smooth interior points of the polyfold.
This can be understood from the gluing parameters $(R_0,\infty)\times S^1$ used to describe a neighborhood of the node.
The corresponding pre-gluing construction provides a local chart for the polyfold, in which the gluing parameters get completed by $\{\infty\}$ to an open disk, which contributes no boundary. 
On the other hand, gluing at a boundary node or a breaking is described by a parameter in $(R_0,\infty)$, which gets completed by $\{\infty\}$ to a half-open interval, so that pre-gluing in these cases provides local charts in which parameter $\infty$ indicates a contribution of \(+1\) to the corner index.
Hence each boundary node (e.g.\ from disk bubbling), each trajectory breaking (as in Floer theory), and each extra level of buildings (in SFT) contribute \(1\) to the corner index. 
This explains why sphere bubbling does not contribute to algebraic relations of the type discussed here, and instead it is the curves with exactly one boundary node (e.g.\ one disk bubble) or one breaking which contribute to the algebraic relations.
\end{remark}

Following the above remark, we need to analyze the boundary stratification of the polyfolds from which the strip-shrinking moduli spaces are cut out in order to predict the algebraic consequences of figure eight bubbling.
For that purpose we describe in the following the {\bf pre-gluing constructions that provide the local charts near figure eight, squashed eight, and disk bubbles}:

\begin{itemlist}
\item Gluing a {\bf figure eight} into a pseudoholomorphic quilt has to go along with introducing an extra strip of width $\delta>0$.
\item Since figure eights do not have an $S^1$ symmetry, it then remains to fix the length of neck between the bubble and the quilted Floer trajectory.
However, this gluing parameter in $(R_0,\infty)$ is in fact fixed by the choice of width \(\delta > 0\), as illustrated in Figure~\ref{fig:8gluing}.
Hence, while figure eight bubbles can only appear on the \(\delta=0\) boundary, they do not contribute to the corner index.
This means that a \(\delta=0\) quilted Floer trajectory with any number of figure eight bubbles will still just have corner index $1$.
Indeed, the figure eights can only be pre-glued simultaneously since their neck-lengths must all be given by the same strip width $\delta>0$.
\end{itemlist}

\begin{figure}
\centering
\def\svgwidth{\columnwidth}
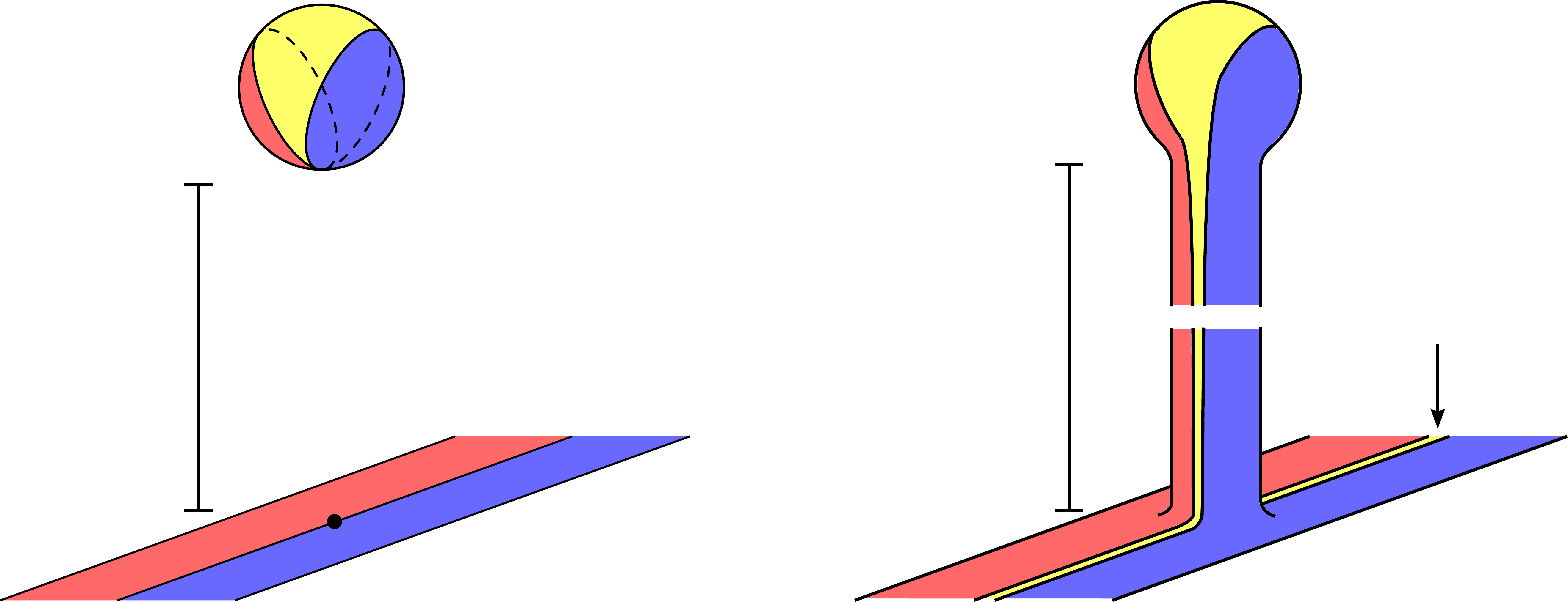
\caption{A figure eight bubble can be glued to a marked point on a double strip indicated in the left figure.
This is done by first fattening the seam in the double strip to a new middle strip of width \(\delta\) and a
centered puncture at the old marked point, then overlaying a neighborhood of this puncture with a neighborhood of the figure eight singularity in cylindrical coordinates, and finally interpolating between the maps on the new domain.
In this construction the neck-length parameter \(R\)  is determined by the strip width \(\delta\), as illustrated in the right figure: 
Since the seams are not straight in cylindrical coordinates, the relative shift between bubble and triple strip is determined by the positions of the seams having to match.  
Note that these figures illustrate the domains of the respective maps, not their images.
\label{fig:8gluing}
}
\end{figure}

\begin{itemlist}
\item The configuration of a {\bf squashed eight bubble} with seam in \(L_{01} \times_{M_1} L_{12}\) attached to a \(\delta=0\) quilted Floer trajectory has corner index $2$ since it can be glued with two independent parameters.
Indeed, the pre-gluing construction is to first widen the seam in both the base and the bubble to strips of independent widths $\delta \geq 0$ and $\eps \geq 0$
(turning the squashed eight into a figure eight in case \(\eps>0\)), and to then pre-glue the resulting bubble into the quilt with a gluing parameter \(R\in (R_0,\infty]\).
Here the strip width \(\delta\) is determined by \((R,\eps)\) as follows:
Pre-gluing with \(R=\infty, \eps>0\) yields a (only approximately holomorphic) figure eight attached to a middle strip of width \(\delta(\infty,\eps)=0\) whereas \(R<\infty, \eps=0\) produces a (approximately holomorphic) quilted Floer trajectory with middle strip width \(\delta(R,0)=0\).
Positive strip width \(\delta(R,\eps)>0\) is achieved only with \(R<\infty, \eps>0\), providing the interior of the chart.

\item
Similarly, a {\bf disk bubble}\footnote{Here we identify quilted spheres with two patches in \(M_k\) and \(M_{k+1}\) with disks in \(M_k^- \times M_{k+1}\);  see footnote~\ref{foot:fold}.
}
with boundary in \(L_{01}\) or \(L_{12}\) attached to a \(\delta=0\) quilted Floer trajectory has corner index 2 since the length of the gluing neck is independent of the width \(\delta\ge 0\).
In fact, in our tree setup there would be a constant figure eight between the disk and Floer trajectory, so that the gluing parameter is used to pre-glue the disk into the figure eight, and the width parameter pre-glues the resulting figure eight into the Floer trajectory.
\end{itemlist}

\begin{remark}[\bf Boundary stratification of strip-shrinking moduli spaces] \label{rmk:wtc}
The gluing construction for squashed eights above indicates that the closures of the two top boundary strata given by \(\delta = 0\) quilts with one figure eight bubble (i.e.\ \(R = \infty, \eps > 0\)) and by \(\delta = 0\) quilts with no bubbles (i.e.\ \(R < \infty, \eps = 0\)) intersect in a corner index 2 stratum consisting of \(\delta=0\) quilts with one squashed eight bubble (i.e.\ \(R = \infty, \eps = 0\)).  
To see how a sequence of \(\delta=0\) quilts with one figure eight bubble can converge to a \(\delta = 0\) quilt with one squashed eight bubble, note that the moduli space of figure eight bubbles has a boundary stratum in which all energy concentrates at the singularity, so that rescaling yields a
squashed eight bubble.

A corollary of this reasoning is that in the moduli space of figure eight bubbles, squashed eights form a codimension-1 boundary stratum.
\end{remark}

\subsection{Strip shrinking in quilted Floer theory for cleanly-immersed geometric composition}  \label{ss:HF}

With this framework in place, we now analyze the boundary stratification of a specific strip-shrinking moduli space, from which we will obtain specific algebraic predictions in Section~\ref{ss:algebra1}, 
The isomorphism between quilted Floer homologies \eqref{eq:HFiso} under monotone, embedded composition is proven in \cite{isom} by applying the cobordism argument in Remark~\ref{rmk:codim} to a moduli space of quilted Floer trajectories with varying width $\delta\in[0,1]$ of the strip mapping to $M_1$,
\begin{align*}
\mathcal{M} := \left\{\underline u = \left( \begin{aligned}
 u_0&\colon\R \times [-1, 0] \to M_0 \\
 u_1&\colon\R \times [0, \delta] \to M_1 \\
 u_2&\colon\R \times [\delta, 1+\delta] \to M_2
 \end{aligned} \right)
\left| 
\begin{aligned}
 \delta\in[0,1], \quad \partial_s u_\ell + & J_\ell(u_\ell) \partial_t u_\ell = 0  \quad \text{for}\; \ell=0,1,2, \\
\quad u_0(\R\times\{-1\})& \subset L_0,  \quad  u_2(\R\times\{1+\delta\})\subset L_2, \\
(u_0,u_1)(\R\times\{0\}) &\subset L_{01},  \;
(u_1,u_2)(\R\times\{\delta\}) \subset L_{12} , \\
\tint u_0^* \om_0 + \tint u_1^* \om_1 & + \tint u_2^* \om_2 <\infty , \quad {\rm ind}\,{\rm D}_{\underline u} = 1
 \end{aligned}
 \right. \right\} \!/ \R . 
\end{align*}
Its boundary arises from the strip widths $\delta=0$ and $\delta=1$, since other bubbling or breaking is excluded by the monotonicity assumption and restriction to Fredholm index $1$. 
Recall however that this bubble exclusion fails even in monotone cases as soon as the geometric composition is a multiple cover of a smooth Lagrangian (as in many examples of interest, e.g.\ \cite{w:chekanov}).
In order to obtain a result that allows for general symplectic manifolds and Lagrangians and cleanly-immersed composition \(L_{01} \circ L_{12}\), we need to study the boundary strata of the polyfold which provides an ambient space for a general compactified moduli space $\ol\M$ of quilted Floer trajectories with varying width.
In addition to breaking and bubbling (of spheres, disks, squashed eights, and figure eights), the ends of the interval $[0,1]$ contribute to its corner index.
Based on the previous analysis of gluing parameters, we predict that the 
{\bf top boundary strata of the Gromov-compactified strip-shrinking moduli space} $\ol\M$ (i.e.\ the strata of $\partial \overline{\mathcal M}$ with corner index $1$) consist of
\label{Bs}
\begin{itemize}
\item[(B1)]
quilted Floer trajectories for $\delta=1$;
\item[(B2)]
once-broken quilted Floer trajectories for $\delta \in (0,1)$;
\item[(B3)]
quilted Floer trajectories with one disk bubble on a seam for $\delta \in (0,1)$;
\item[(B4)]
quilted Floer trajectories for $\delta=0$ with generalized seam condition;\footnote{
Since the Lagrangian \(L_{01} \circ L_{12}\) is in general just a clean immersion, we require not only that the corresponding seam gets mapped to the composed Lagrangian, but we require this map to lift continuously to \(L_{01} \times_{M_1} L_{12}\), and include the lift as data of the Floer trajectory. 
}
\item[(B5)] quilted Floer trajectories for $\delta=0$ with any positive number of figure eight bubbles.\footnote{
In this case, the generalized seam condition requires a lift to \(L_{01} \times_{M_1} L_{12}\) that is continuous (and hence smooth) on the complement of the bubbling points, and at each bubbling point is possibly discontinuous in a way that matches with the limits $\lim_{s\to\pm\infty}w_1(s, \cdot)$ of the figure eight.
}
\end{itemize} 
Within each such boundary component we may also find curves that include trees of sphere bubbles.
Furthermore, contributions from (B2) and (B3) necessarily involve curves that for fixed $\delta$ are not cut out transversely, i.e.\ these contributions come from a finite set of singular values of $\delta\in (0,1)$.

To argue for our prediction, in particular the necessity of allowing figure eight bubbles in (B5), from a more geometric perspective, let us go through the rather silly example of shrinking the strip in standard Floer theory for a pair of Lagrangians $L_{01}\subset \pt \times M_1$ and $L_{12}\subset M_1^- \times \pt$. 
In this case, the boundary component (B1) represents the Floer differential. The boundary components (B2) and (B3) will be empty, since the Cauchy--Riemann operator for each $\delta>0$ is just a rescaling of that for $\delta=1$, and hence all can be made regular simultaneously.
Hence the Floer differential must coincide with the algebraic contributions from (B4--5). Indeed, each Floer trajectory can be viewed as a figure eight bubble by Example~\ref{ex:mickeymouse}, and in this case is attached to the constant Floer trajectory in $\pt\times\pt$.
Broken Floer trajectories are excluded for index reasons.
More evidence for the necessity of (B5) are the Floer homology calculations in \cite{w:chekanov} between Clifford tori and $\RP^n\subset \CP^n$ resp.\ the Chekanov torus in $S^2\times S^2$ using strip shrinking for multiply covered geometric composition, where bubbling can only be excluded for classes of Floer trajectories whose limits are not self-connecting.
Nonvanishing results for the corresponding entries of the differential from other calculation methods then indirectly show nontrivial figure eight contributions.

\medskip 

More generally, the Gromov-compactification $\ol\M$ of a moduli space of pseudoholomorphic quilts involving strip-shrinking, such as $\M$ above, arises by including all combinations of

\begin{itemize}
\item 
breaking at striplike ends (each adding 1 to the corner index), 
\item
sphere bubbles attached at interior points (not contributing to the corner index),
\item
disk bubbles attached at boundaries and seams (each adding 1 to the corner index),
\item
squashed eight bubbles developing at the seam resulting from strip-shrinking (each adding 1 to the corner index, in addition to the 1 corner index arising from stripwidth $0$),
\item
figure eight bubbles developing at the seam resulting from strip-shrinking (not contributing to the corner index, apart for the 1 corner index arising from stripwidth $0$). 
\end{itemize}

Combinations of these effects yield -- as described in e.g.\ \cite{ms:jh} and \cite{frauenfelder} -- trees of sphere bubbles attached at interior points, and trees of disk and sphere bubbles attached at boundaries and seams. At the new seam arising from strip-shrinking, we moreover have to include squashed eights and figure eights into the bubble trees. The resulting Gromov compactification is summarized in Remark~\ref{rmk:8bubbletree}. Since sphere bubbles do not contribute to the boundary stratification and thus the algebra, we will not include them in the following. 
Then a bubble tree involving squashed eights, figure eights, and disk bubbles can be described in terms of the following combinatorial structure, as illustrated in Figure~\ref{fig:tree-pair}:

\begin{itemize}
\item 
Every component $C$ of the bubble tree is represented by a vertex $v_C$, and the main component of the bubble tree corresponds to the root.
These vertices form the set of ``component vertices'' $V_{\compnt}=V_{\compnt,1}\cup V_{\compnt,2}$, which decomposes into the set $V_{\compnt,1}$ of vertices (representing squashed eights and disk bubbles) whose components have one seam, and the $V_{\compnt,2}$ of vertices (representing figure eights) whose components have two seams. 

\item 
Every seam $S$ on a component $C$ is represented by a vertex $v_S$, which is connected to $v_C$ by a solid edge. These vertices form the set of ``seam vertices'' $V_\seam$. 

\item
If a component $C'$ is a bubble which is attached to a seam $S$ on another component $C$, then $v_{C'}$ is connected to $v_S$ by a dashed edge.

\item 
Every marked point $P$ on a seam $S$ is represented by a leaf vertex $v_P$ and is connected to $v_S$ by a dashed edge.  These vertices form the set of ``marked point vertices'' $V_{\text{mark}}$. 

\item
The resulting tree structure forms an $r=2$ tree-pair as in Definition~\ref{def:tree-pair}. 
\end{itemize}

\begin{figure}
\centering
\def\svgwidth{0.6\columnwidth}
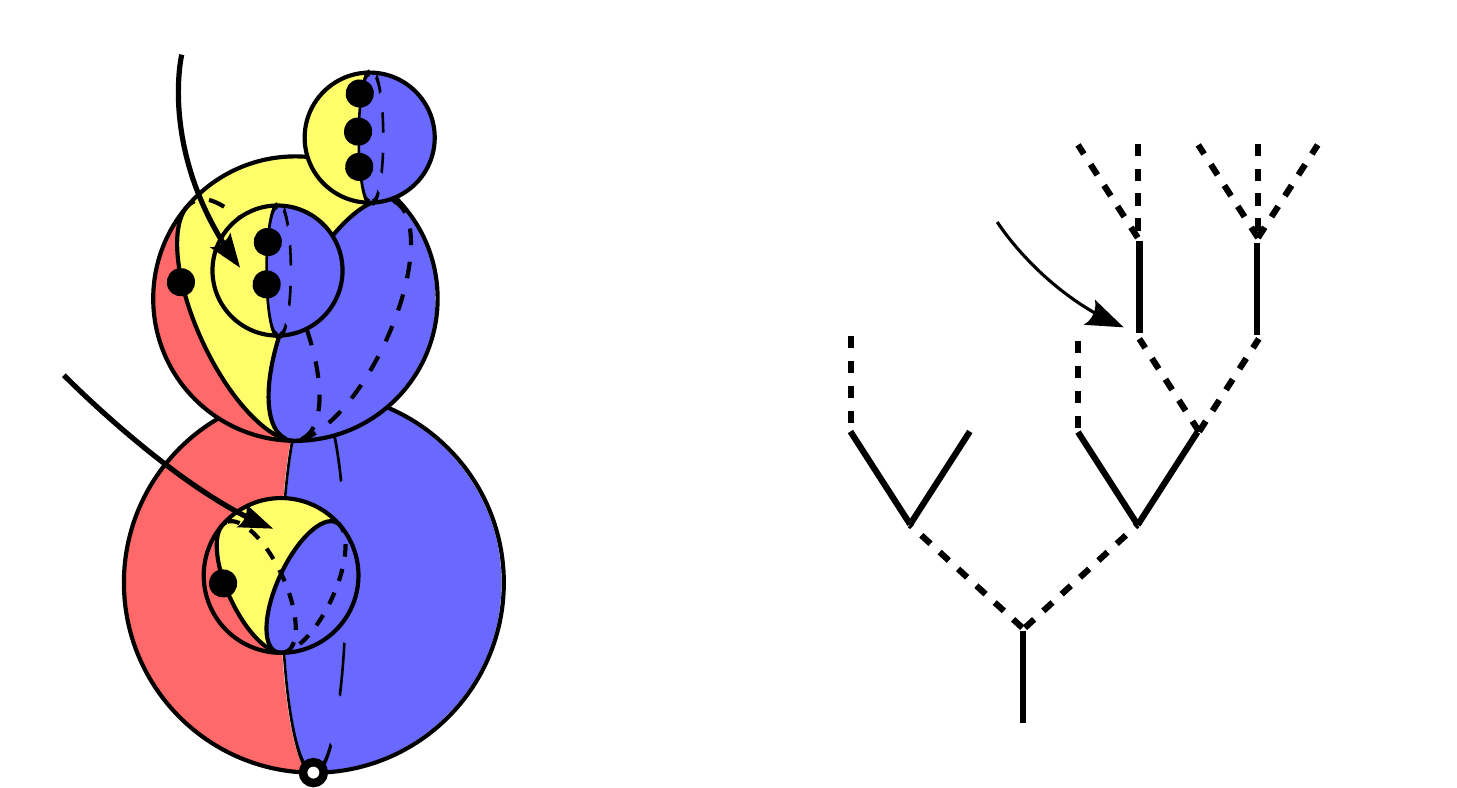
\caption{A figure eight bubble tree and the corresponding $r=2$ tree-pair.
\label{fig:tree-pair}}
\end{figure}

This combinatorial structure, called $r=2$ tree-pair, is developed and discussed for general $r\geq 1$ (indicating the number of seams on a generalized figure eight) in \cite{b:2ass}. Since the combinatorics is quite involved, we only define the $r=2$ case that is relevant to figure eights (though it does not serve to explain the origin of the term ``tree-pair'').
We also define a ``metric'' version of this notion, which we will use later to introduce Morse trajectories into our moduli spaces.

\begin{definition}
\label{def:tree-pair}
An {\bf{$\mathbf{r=2}$ tree-pair}} is a rooted ribbon tree $T$ (oriented toward the root) whose edges are either solid or dashed, which satisfies the following properties:
\begin{itemize}
\item The vertices of $T$ can be partitioned as $V(T) = V_{\compnt,1} \sqcup V_{\compnt,2} \sqcup V_\seam \sqcup V_{\text{mark}}$, where:
\begin{itemize}
\item every $\alpha \in V_{\compnt,k}$ has $k$ solid incoming edges, no dashed incoming edges, and either a dashed or -- when $\alpha$ is the root -- no outgoing edge;
\item every $\alpha \in V_\seam$ has $\geq 0$ dashed incoming edges, no solid incoming edges, and a solid outgoing edge;
\item every $\alpha \in V_{\text{mark}}$ is a leaf (i.e.\ has 0 incoming edges) with a dashed outgoing edge.
\end{itemize}
\item
The path from a leaf $\alpha\in V_\seam$ of $T$ to the root contains at most one element of $V_{\compnt,2}$.
For a leaf $\alpha \in V_{\text{mark}}$, the path to the root contains exactly one element of $V_{\compnt,2}$.

\end{itemize}
An {\bf $\mathbf{r=2}$ metric tree-pair} is an $r=2$ tree-pair together with a length function $\lambda\colon E^\node \to [0,\infty]$ such that:
\begin{itemize}
\item the sum
$h$ of lengths of edges between a vertex in $V_{\compnt,2}$ and the root vertex is the same for all vertices in $V_{\compnt,2}$; and

\item 
if $V_{\compnt,2}$ is nonempty, and if $v \in V_{\compnt,1}$ can be connected to the root by a path that does not pass through any element of $V_{\compnt,2}$, then the sum of lengths of edges between $v$ and the root is $\leq h$.
\end{itemize}
Here the {\bf nodal edges} $E^\node$ are those dashed edges connecting a vertex in $V_\seam$ to a vertex in $V_{\compnt,1}\sqcup V_{\compnt,2}$.
The {\bf figure eight height $\mathbf h$} of such a metric tree-pair is the sum of lengths of edges between each vertex in $V_{\compnt,2}$ and the root, or in case $V_{\compnt,2}=\emptyset$ it is the maximal sum of lengths of edges between a vertex in $V_{\compnt,1}$ and the root.\footnote{Equivalently, the figure eight height of an $r=2$ metric tree-pair is the maximal length of a chain of edges attached to the root which does not cross a $V_{\compnt,2}$ vertex.}
\end{definition}

\begin{remark}
\label{rmk:gen_tree_pairs}
More general tree-pairs of type $(n_1,\ldots,n_r)$ arise from a compactification of the space of generalized marked figure eight domains with $r\geq 1$ seams and $n_k\geq 0$ marked points on each seam.
The first author introduced tree-pairs in \cite{b:2ass} and showed that they give rise to abstract polytopes called 2-associahedra.
He constructed topological realizations of the 2-associahedra in \cite{b:realize} in terms of witch balls -- a notion of quilted marked surfaces that generalizes the domain of figure eight bubbles by allowing for further seams and marked points.
He will moreover use tree-pairs in \cite{b:2cat} to motivate the definition of a notion of $A_\infty$ 2-categories (see Remark~\ref{rmk:2cat}).
This will generalize the case $r=1$ in which moduli spaces of marked points on the boundary of a disk (viewed as a sphere with equatorial seam) realize the classical Stasheff associahedra, which encode the algebraic structures and relations in $A_\infty$ categories.

\begin{figure}
\centering
\def\svgwidth{0.8\columnwidth}
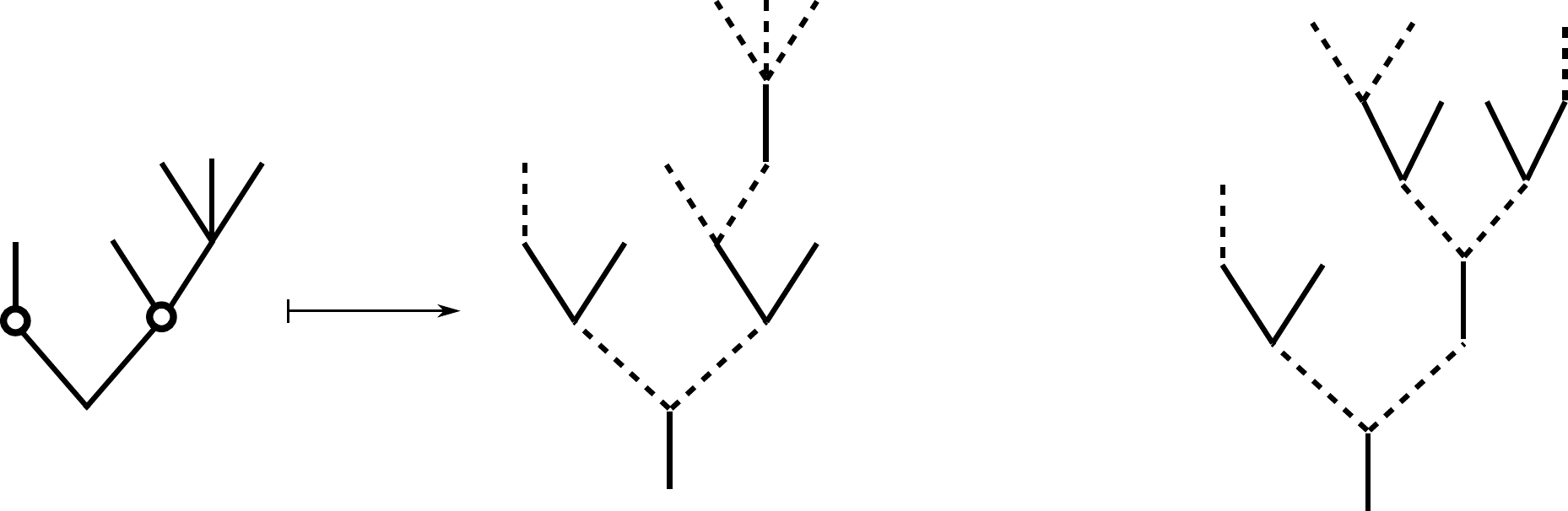
\caption{On the left is a colored rooted ribbon tree and the corresponding $r=2$ tree-pair.
On the right is an $r=2$ metric tree-pair, with nodal edges labeled by edge-length; the requirement that each vertex in $V_{\compnt,2}$ be the same distance from the root translates into the equations $a=b+c=b+d$.
\label{fig:colored_tree}}
\end{figure}

For example, the Stasheff multiplihedra, which encode $A_\infty$ functors, are shown in \cite{mw} to be realized by colored rooted ribbon trees, which in turn are used in \cite{mww} to associate $A_\infty$ functors to Lagrangian correspondences.
To see that this should become a natural (small) part of a symplectic $A_\infty$ 2-category, note that colored rooted ribbon trees can be identified with the $r=2$ tree-pairs of type $(n,0)$, corresponding to figure eight bubble trees in which marked points occur only on the 01-seam.
Indeed, given any $r=2$ tree pair we obtain a colored rooted ribbon tree by marking the vertices in $V_{\compnt,2}$ as colored and then collapsing all solid edges. 
Conversely, given a colored rooted ribbon tree, one obtains a specific type of tree-pair as illustrated in Figure~\ref{fig:colored_tree} by (1) converting edges to dashed edges; (2) converting noncolored leaves to vertices in $V_{\text{mark}}$; (3) replacing every noncolored interior vertex by a solid edge from a new vertex in $V_\seam$ to a new vertex in $V_{\compnt,1}$; (4) replacing every colored vertex $v$ by two solid edges from new vertices in $V_\seam$ to a new vertex in $V_{\compnt,2}$, and attaching the incoming subtree of $v$ to the left\footnote{The alternative convention of attaching the incoming subtree to the right ``seam vertex'' yields an analogous specialization of $r=2$ tree-pairs of type $(0,n)$, corresponding to bubble trees with marked points only on the 12-seam.} vertex in $V_\seam$.
\end{remark}

At this point, a reader eager to see algebraic consequences could skip to Section~\ref{ss:algebra1}, under the assumption that evaluation maps from the strip-shrinking moduli spaces satisfy sufficient transversality conditions to allow for pullbacks and pushforwards on spaces of chains on the Lagrangian correspondences. 
However, to obtain a well defined theory in general settings, we propose to construct these algebraic structures from the following enlarged moduli spaces that will simplify both our analytic and algebraic work, and also serve to further solidify our prediction of boundary stratifications.

\subsection{Morse bubble trees arising from strip-shrinking moduli spaces}
\label{ss:Morse}
To capture the algebraic effect of figure eight bubbling in terms of Morse chains on $L_{01}\times_{M_1} L_{12}$, we will extend strip-shrinking moduli spaces such as $\ol\M$ discussed in \S\ref{ss:HF} by allowing Morse flow lines between the quilted strip and figure eight, squashed eight, and disk bubbles.

For that purpose we start from the description of bubble trees appearing in any codimension, given in \S\ref{ss:HF} in terms of $r=2$ tree-pairs.
Apart from this more complicated combinatorial tree structure, the extension of strip-shrinking moduli spaces by allowing Morse flow lines is directly analogous to constructing the $A_\infty$-algebra of a single Lagrangian submanifold via trees of disk vertices and Morse edges in \cite{lw:morsetrees}, which we summarize here.\footnote{
This approach has been a partially realized vision in the field for a while.
Formally, it follows the $A_\infty$ perturbation lemma \cite[Prop.\ 1.12]{se:bo} for transferring an $A_\infty$-structure on a space of differential chains to the space of Morse chains.
A related moduli space setup was proposed in \cite{CL} but with a different algebraic goal.}

\begin{remark}[\bf Polyfold setup and boundary stratification for trees of disks with Morse edges]  \label{rmk:disks}
Consider a single disk bubble attached by a marked point to e.g.\ a Floer trajectory.
If we enlarge the moduli space by Morse flow lines between nodal pairs of boundary marked points, then boundary strata with length $0$ flow lines cancel strata with boundary nodes.
A point in this extended compactified moduli space consists of a metric rooted ribbon tree, together with a pseudoholomorphic disk (modulo appropriate reparametrizations) for every vertex and a length-$\ell$ generalized Morse trajectory (including broken trajectories that compactify the space of finite length Morse trajectories) for every length-$\ell$ edge.

In \cite{lw:morsetrees}, assuming the absence of sphere bubbling, this moduli space is described as the zero set of a Fredholm section in an M-polyfold bundle.
This section is given by the Cauchy--Riemann operators on each vertex together with the matching conditions for each edge between the endpoints of the Morse trajectory and corresponding marked point evaluations of the disk maps.
The ambient M-polyfold is the space of trees in which vertices are represented by (reparametrization equivalence classes of) not necessarily pseudoholomorphic maps and edges are represented by generalized Morse trajectories.
Nodal configurations with edge length $0$ are interior points of this space by pre-gluing the nodal disks at a single vertex (this is made rigorous in terms of M-polyfold charts arising from the pre-gluing construction).
Hence the boundary stratification of this space is induced by the compactified space of Morse trajectories -- which was given a smooth structure in \cite{w:morse}, with corner index equal to the number of critical points at which a trajectory breaks.

Now the arguments of Remark~\ref{rmk:codim} yield an algebraic structure from counting isolated solutions whose relations are given by summing over the top boundary strata (those with corner index 1). In this case, adding incoming Morse edges from input critical points yields a curved $A_\infty$-algebra because the top boundary strata -- configurations with exactly one broken trajectory, i.e.\ edge of length $\infty$ -- correspond to the top boundary strata of a space of metric rooted ribbon trees. 
The stable trees in the latter realize Stasheff's associahedra, so that the boundary strata yield the $A_\infty$-relations with the exception of terms involving $\mu^1$ or $\mu^0$.
These additional terms arise from breaking into two subtrees of which one is unstable with zero or one incoming Morse edge. 
Similarly, considering Floer trajectories for pairs of Lagrangians (or quilted Floer trajectories) with several bubble trees (on each boundary component resp.\ seam) yields the relations for the Floer differential coupled with the $A_\infty$-algebras of the Lagrangians.

Sphere bubbling can be included here by extending the ambient space of disk maps and Morse edges to allow for trees of spheres attached to the maps.
This introduces the additional complication of isotropy, turning the ambient space into a polyfold (M-polyfolds are a special case with trivial isotropy) and forcing the use of multivalued perturbations, thus yielding rational counts.
However, as discussed in Remark~\ref{rmk:codim}, this does not affect the boundary stratification and algebraic consequences.
\end{remark}

Next, we apply the same philosophy of enlarging moduli spaces by incorporating Morse edges to the figure eight bubble trees introduced at the beginning of this section. 

\begin{definition}
A {\bf Morsified figure eight bubble tree} for Lagrangian correspondences $L_{01}\subset M_0^-\times M_1, L_{12}\subset M_1^-\times M_2$ and fixed almost complex structures consists of the following data:
\begin{itemize}
\item an $r=2$ metric tree-pair $T$;

\item quilted pseudoholomophic spheres $\underline{w}_v$ for every component vertex $v \in V_{\compnt,1}\cup V_{\compnt,2}$:
\begin{itemize}
\item if $v \in V_{\compnt,2}$, then $\underline{w}_v$ is a figure eight bubble between \(L_{01}\) and \(L_{12}\) as in Definition~\ref{def:8};
\item if the path from $v \in V_{\compnt,1}$ to the root runs through a ``figure eight vertex'' $v'\in V_{\compnt,2}$, then $\underline{w}_v$ is a disk bubble (which can be viewed as quilted sphere as in Remark~\ref{rmk:stereographic}) with boundary on $L_{01}$ resp.\ $L_{12}$ according to whether the path from $v$ to $v'$ passes through the left resp.\ the right incoming edge of $v'$;
\item if $v\in V_{\compnt,1}$ is connected to the root without crossing a figure eight vertex in $V_{\compnt,2}$, then $\underline{w}_v$ is a squashed eight bubble with seam in \(L_{01} \times_{M_1} L_{12}\) as in Definition~\ref{def:8};
\end{itemize}

\item marked points $z^\pm_e$ for every dashed edge $e$ from a vertex $v\in V_{\compnt,1}\cup V_{\compnt,2}\cup V_\marked$ to a vertex
$v_S\in V_\seam$ as follows:
\begin{itemize}
\item
$z^+_e\in S$ is an incoming marked point on the domain of the quilt $\ul w_{v'}$
associated to the vertex $v' \in V_{\compnt,1}\cup V_{\compnt,2}$ to which $v_S$ is connected by a solid edge, and it lies on the seam $S\subset S^2$ corresponding to $v_S$;
\item
if $v\in V_{\compnt,1}\cup V_{\compnt,2}$ then $z^-_e\in S^2$ is the ``outgoing marked point'' on the domain of $\underline{w}_v$, which for a disk bubble lies on the boundary (seam) and for a figure eight or squashed eight bubble lies at the singularity of the quilted domain;
\item
for $v\in V_\marked$ there is no marked point $z^-_e$;
\end{itemize}

\item generalized Morse trajectories $\underline{\gamma}_e$ of length $\lambda(e)$ for every dashed edge $e$ from a vertex $v\in V_{\compnt,1}\cup V_{\compnt,2}\cup V_\marked$ to a vertex in $v_S\in V_\seam$ as follows:
\begin{itemize}
\item
$\underline{\gamma}_e$ is a trajectory on the Lagrangian by which the seam $S$ is labeled, 
and ends at $\underline{w}_{v'}(z^+_e)$ where $v'\in V_{\compnt,1}\cup V_{\compnt,2}$ is the vertex connected to $v_S$ by a solid edge; 
\item
if $v\in V_{\compnt,1}\cup V_{\compnt,2}$, then $\underline{\gamma}_e$ starts at $\underline{w}_v(z^-_e)$;
\item
if $v\in V_\marked$, then $\underline{\gamma}_e$ starts at a Morse critical point.
\end{itemize}
\end{itemize}
In addition, the component corresponding to the root of $T$ is equipped with an ``output'' marked point -- for a figure eight or squashed eight bubble this is the singularity of the quilted domain; for a disk bubble it can be any point on the boundary. 
All these marked points are required to be disjoint. 
Finally, we impose the {\bf stability condition} that constant squashed eights and disk bubbles must have at least 3 marked points (with interior marked points counting for 2, in situations where these are incorporated), while constant figure eight bubbles must have at least 2 marked points (including the singularity where the two seams intersect). 
\end{definition}

With this background, we can now introduce Morse flow lines into strip-shrinking moduli spaces. 
We will illustrate this in the case of the compactified moduli space of triple strips $\ol\M$ from \S\ref{ss:HF}, which is a representative example.
For $\delta \in [0,1]$, we will denote by $\ol\M_\delta$ those elements of $\ol\M$ whose middle strip has width $\delta$; hence $\ol\M = \bigcup_{\delta\in[0,1]} \ol\M_\delta$.
Recall that $\ol\M$ has five types of codimension-1 boundary strata, (B1--5), where
(B3) and (B5) are the two types involving bubbles: (B3) consists of trajectories with one disk bubble on a seam for $\delta \in (0,1)$, while (B5) consists of $\delta=0$ trajectories with any positive number of figure eight bubbles attached to the middle seam.
Our goal is to enlarge $\ol\M$ to another compact moduli space $\ol\M^\ext$ by attaching codimension-0 strata at the boundary strata of fiber product type (B3), (B5). We will do so by introducing Morse flow lines, and thus obtain new boundary strata (B3'), (B3''), (B5') from Morse breaking.
However, the other boundary strata (B1), (B2), (B4) will also be modified in the process, as will be specified in (B1'--5') below. 
More precisely, we construct the extended moduli space $\ol\M^\ext:=\bigcup_{\delta \in [-\infty,1]} \ol\M^\ext_\delta$ as follows:

\begin{itemize}
\item 
An element of the extended moduli space $\ol\M^\ext_\delta$ for $\delta \in (0,1]$ consists of a triple strip $\ul u \in \ol\M_\delta$ together with a finite number of trees of disk bubbles with (possibly infinite) Morse edges between them, attached -- via Morse edges and marked points -- to the seams and boundary components of $\ul u$.
In addition, trees of sphere bubbles can be attached at finitely many interior points of these domains.
The development of a sphere tree at a boundary or seam would be described by attaching it -- via interior marked points -- to a constant disk bubble.
This yields the stability requirement that disk components are constant only if they have at least 3 marked boundary points -- with interior marked points counting for 2 marked boundary points. 
As in Remark~\ref{rmk:disks} this captures disk bubbling on seams/boundaries in an algebraic coupling with the curved $A_\infty$-algebras generated by Morse chains on the Lagrangians. 

\item
An element of $\ol\M^\ext_\delta$ for $\delta \in [-\infty,0]$ consists of a double strip $\ul u \in \ol M_0$ together with Morsified figure eight bubble trees attached via Morse edges to the middle seam, such that for each one of these bubble trees, the figure eight height and the length of the Morse edge attaching the tree to the seam sum to $-\delta$.
Moreover, trees of sphere bubbles can be attached at finitely many interior point of these domains.
The development of a sphere tree at a boundary component or the middle sseam would be described by attaching it to a constant disk, squashed eight, or figure eight bubble.
\end{itemize}

Later on we further generalize these moduli spaces to obtain algebraic coupling with the Morse chain complexes on \(L_{01}\) and \(L_{12}\).
For that purpose we allow leaf vertices $v\in V_\text{mark}$ that are represented by Morse critical points $x^-_v$, with the associated edges represented by generalized Morse trajectories in the compactifications $\overline{\M}(x_v^-, L_{ij})$ of the unstable manifolds of $x_v^-$.

\begin{remark}[\bf Motivation and boundary strata of extended moduli space]  
The construction of the extended moduli space $\ol\M^\ext$ is guided by the goal of removing boundary strata involving nodes (connecting bubbles to the main strip or to each other).
Since a zero width strip with any number of figure eight bubbles is a corner index $1$ boundary point, we need to remove such boundary strata by extending with a single normal parameter, which is the reason for requiring the Morse length between any figure eight and the double strip to have length $-\delta$.
This extends the strip width parameter from $\delta\in[0,1]$ to \(\delta<0\).
Similarly, we cancel the boundary components involving squashed eights forming on the middle seam by introducing Morse flow lines on $L_{01}\times_{M_1} L_{12}$, whose length can now vary individually.
However, since squashed eights also lie on the boundary of moduli spaces of figure eight bubbles (when all energy concentrates at the seam tangency), the length of these Morse edges has to be bounded by the figure eight height.

From here we can note that any breaking of trajectories between squashed eights or figure eights and the main component corresponds to Morse height $-\delta\to\infty$ and thus forces at least one breaking between each figure eight vertex and the root.
If there is just one breaking for each figure eight, then the result lies in the top boundary stratum, but any additional breaking of an edge adds to the corner index individually.
Indeed, if there are $N$ figure eight vertices, then this effect is analogous to the breaking of finite length Morse trajectories in an $N$-fold Cartesian product: 
The first breaking has to happen simultaneously in all components since their lengths are coupled; further breakings are independent since trajectories can be constant in various components.
\end{remark}

As in Remark~\ref{rmk:disks}, we expect to obtain an ambient polyfold (or M-polyfold if sphere bubbling can be a priori excluded) for the compactified extended moduli space $\ol\M^\ext$ by replacing the pseudoholomorphic curves and quilts with appropriate spaces of not necessarily pseudoholomorphic maps modulo reparametrization.
Making this rigorous will require a precise setup of pre-gluing constructions for squashed eights and figure eights as M-polyfold charts from \cite{b:thesis}, which will also make the predicted boundary stratification rigorous.
Further steps in the program are the Fredholm property of the Cauchy--Riemann operator, and formal setups for the construction of gluing-coherent perturbations and orientations.
However, we can already see that the boundary stratification of this polyfold is -- apart from strata of types (B1), (B2), and (B4)
on page~\pageref{Bs} -- induced by the boundary structure of the compactified Morse trajectory spaces from \cite{w:morse}.
These boundary components correspond to the boundary strata of the space of metric tree-pairs.
So we expect to obtain algebraic relations from the {\bf top boundary strata of the compactified strip-shrinking moduli space with Morsified figure eight bubble trees
} $\overline{\mathcal M}^{\rm ext}$, which replace our previous list (B1--5) as follows (ignoring sphere bubbling, which is of codimension 2 in each boundary stratum):

\begin{figure}
\centering
\def\svgwidth{\columnwidth}
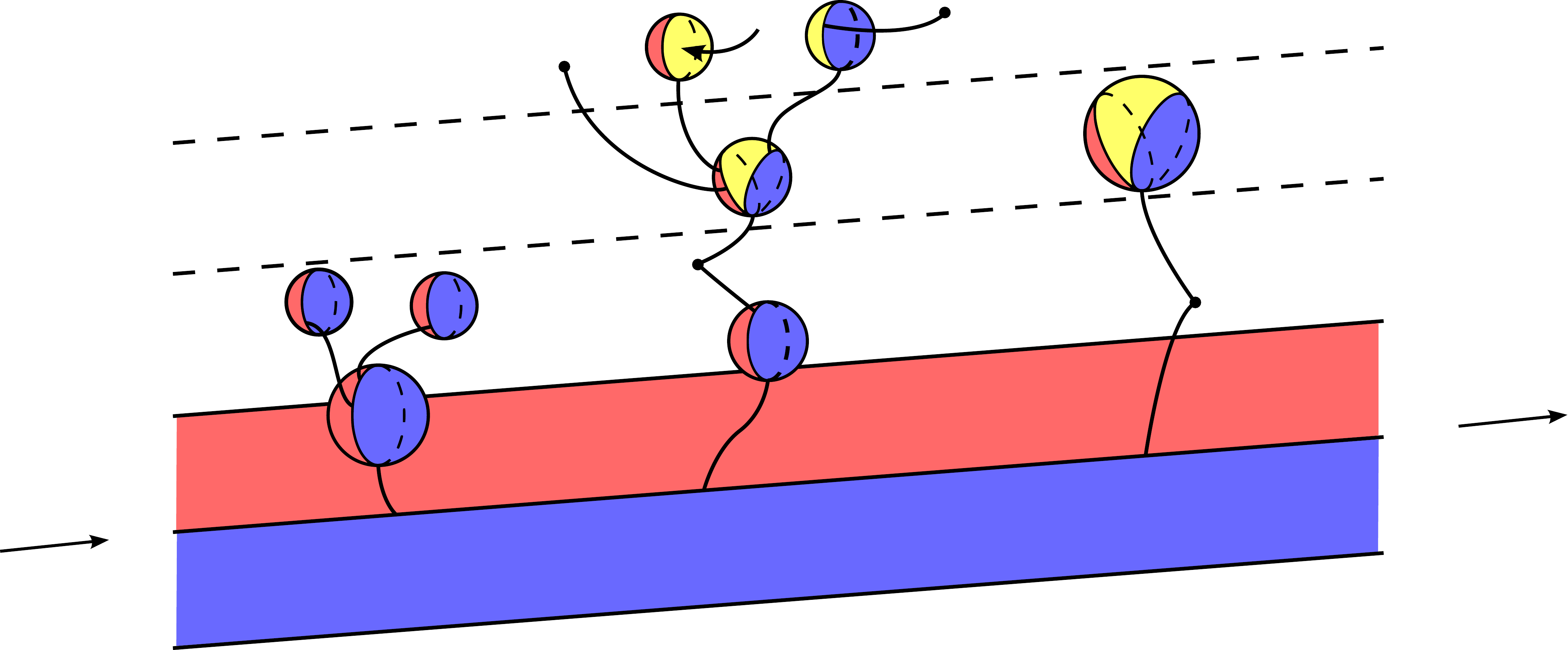
\caption{A contribution to the differential on \(CF( L_0, (L_{01}\circ L_{12}, \sum_{k,l\geq0} b_{02}^{k|l}), L_2 )\).
The two subtrees above the Morse critical points contribute to \(b_{02}^{1|1}\) and \(b_{02}^{0|0}\), respectively.
The dashed lines indicate the level structure of the metric tree-pair.
Additional marked leaf vertices are labeled with Morse chains $b_{01}$ or $b_{12}$, indicating a formal sum of trees whose edges from leaves are represented by half-infinite Morse trajectories starting at the Morse critical points that represent the chains.
\label{fig:prediction}
}
\end{figure}

\label{BBs}
\begin{enumerate}
\item[(B1')]
quilted Floer trajectories for $\delta=1$ may now also include trees of disk bubbles with finite-length Morse edges;
\item[(B2')]
once-broken quilted Floer trajectories for $\delta \in (0,1)$ may now also include trees of disk bubbles with finite-length Morse edges;
\item[(B2'')]
once-broken quilted Floer trajectories can also appear for $\delta \in (-\infty,0)$, and consist of Floer trajectories of width $0$ with a positive number of Morsified figure eight bubble trees of figure eight height $-\delta$, and possibly further trees of disk bubbles with finite-length Morse edges;  
%
\item[(B3')]
quilted Floer trajectories for $\delta \in (0,1)$ with one disk bubble on a seam or boundary are canceled as boundary, but new boundary components contain quilted Floer trajectories of width $\delta \in (0,1)$ with trees of disk bubbles, in which one Morse edge is broken once;
\item[(B3'')]
quilted Floer trajectories of width $0$ with a single broken Morse edge can also appear for $\delta \in (-\infty,0)$, i.e.\ in the presence of one or several Morsified figure eight bubble trees of figure eight height $-\delta$, with the broken edge occurring either above the figure eight height or in a disk bubble tree that is attached to a boundary component;
\item[(B4')]
quilted Floer trajectories for $\delta=0$ may now include trees of disk bubbles with finite length Morse edges attached to the boundary components; (however,when the width goes to zero in the presence of a tree of disk bubbles on $L_{01}$ or $L_{12}$, this is viewed as constant figure eight to which this tree is attached, and is canceled like other strata of type (B5));
\item[(B5')]
quilted Floer trajectories for $\delta=0$ with figure eight bubbles are canceled as boundary components, but new boundary components contain quilted Floer trajectories of width $0$ with one or several Morsified figure eight bubble trees of figure eight height $-\delta=\infty$ attached to the middle seam; that is between each figure eight -- of which there is at least one, otherwise this component is of codimension $\geq 2$ as a consequence of Remark~\ref{rmk:wtc}
-- and main quilt there is exactly one once-broken Morse trajectory, and all other edges have finite length.
\end{enumerate}

An example of a boundary point of type (B5') is given in Figure~\ref{fig:prediction}, where the dashed lines indicate the level structure of the metric tree-pair resulting from strip shrinking:
In the first level above the root quilt, all vertices are represented by squashed eights, whereas in the third level the vertices are represented by disk bubbles with boundary on $L_{01}$ or $L_{12}$.
The second level provides the division since between each marked point
and the root there is exactly one vertex represented by a figure eight.
The figure eight height of the pictured tree is $\infty$, reflected by at least one broken trajectory below each figure eight.
The corner index is $1$ since there is exactly one breaking for each figure eight.

\begin{remark}[\bf Analytic description of extended moduli space]  \label{rmk:compactness-for-Morse}
Introducing Morse trajectories into the strip-shrinking moduli spaces will not complicate the compactness analysis.
Since the new moduli spaces can be regarded as a fiber product of spaces of finite/semi-infinite/infinite Morse trajectories (whose structure have been thoroughly worked out, e.g.\ in \cite{w:morse}) with moduli spaces of pseudoholomorphic quilts (with Morse trajectories replaced by marked points), Gromov compactness is no more difficult than Gromov compactness for the strip-shrinking moduli spaces without Morse trajectories (see \S\ref{ss:overview}).
Furthermore, this fiber product structure immediately suggests how to define the ambient polyfold of maps, from which these moduli spaces will be cut out by a polyfold Fredholm section.
\end{remark}

\subsection{Floer homology isomorphism for general cleanly-immersed geometric composition} \label{ss:algebra1}
To generalize the isomorphism between quilted Floer homologies \eqref{eq:HFiso} under monotone, embedded composition to general symplectic manifolds and Lagrangians and cleanly-immersed composition \(L_{01} \circ L_{12}\), we analyzed in Section~\ref{ss:boundary} the boundary strata of the polyfold which provides an ambient space for a general compactified moduli space of quilted Floer trajectories with varying width, such as $\overline{\mathcal M}$ in \S\ref{ss:HF}.
The cobordism argument outlined in Remark~\ref{rmk:codim} then predicts an algebraic identity from summing over the boundary strata (B1--5) of $\overline{\mathcal M}$ on page~\pageref{Bs} resp.\ the refined boundary strata (B1'--B5') on page~\pageref{BBs} of the compactified extended moduli space $\overline{\mathcal M}^{\rm ext}$.

We expect the strata of types (B2), (B3) resp.\ (B2'), (B2''), (B3'), (B3'') to appear only at finitely many singular values of strip width $\delta\in (0,1)$ or figure eight height $\delta\in (-\infty,0)$ and to provide a chain homotopy equivalence between two Floer complexes:
The first is in both frameworks defined from the regular strip width $\delta = 1$, with the differential given by counting solutions of type (B1) resp.\ (B1'). In the Morse framework, the second Floer complex is defined from counting regular solutions of types (B4') and (B5').
In the framework of page~\pageref{Bs}, the second complex should arise from solutions of types (B4) and (B5) at $\delta=0$. 
Up to such a chain homotopy equivalence, or assuming there are no singular values in $(-\infty,1)$, we obtain the following identity relating the Floer differential \(\mu^1_{(\delta = 1)}\) arising from strip width $\delta=1$ and the Floer differential \(\mu^{1|0}_{(\delta=0)}\) arising from strip width $\delta=0$ with generalized seam condition in \(L_{01} \times_{M_1} L_{12}\):\footnote{
Note that the moduli spaces with generalized seam condition involve a choice of lift of the seam values to $L_{01}\times_{M_1}L_{12}$. So in case $L_{01}\circ L_{12}$ is a smooth Lagrangian correspondence albeit multiply covered by $L_{01}\times_{M_1}L_{12}$, this Floer complex is generated by lifts of intersection points.
The differential  \(\mu^{1|0}_{(\delta=0)}\) only counts Floer trajectories with smooth seam lift, whereas the terms \(\mu^{1|k}_{(\delta=0)}\) for $k\ge1$ will allow for jumps in the seam lift at $k$ marked points.
}
\begin{align} \label{eq:mu1relation}
\mu^1_{(\delta = 1)}(-) \; = \; \textstyle \sum_{k\ge 0}  \mu^{1 | k}_{(\delta = 0)} (\, - \, |\,  b_{02},\ldots, b_{02}).
\end{align}
In the Morse framework, the moduli spaces defining the differentials \(\mu^1_{(\delta = 1)}\) and \(\mu^{1|0}_{(\delta=0)}\) both allow for trees of disks with finite Morse edges (including trees of squashed eights attached to the seam obtained from strip shrinking).
Figure eight bubbling is in both frameworks encoded in the higher operations \(\mu^{1|k}_{(\delta=0)}\) for \(k \geq 1\). 
In the framework of (B5), this operation should be defined from quilted Floer trajectories of middle strip width $0$ with $k$ incoming marked points on the seam labeled by the immersion $L_{01}\circ L_{12}$, and \(b_{02}\) should be a chain obtained from a moduli space of figure eight bubbles by evaluation at the singularity.
In order to avoid the challenge of arranging transversality of evaluation maps to an infinitely-generated (co)chain complex, we will construct the operations
$$
\mu^{1|k}_{(\delta=0)} : \;
CF(L_0, L_{01}\circ L_{12}, L_2) \otimes CM(L_{01}\times_{M_1} L_{12})^{\otimes k} \;\longrightarrow\; CF(L_0, L_{01}\circ L_{12}, L_2)
$$
on (finitely-generated) Morse complexes.
These operations will be defined by counting quilted Floer trajectories of strip width $0$ with \(k\) incoming Morse edges (marked point leaves whose edges are represented by half-infinite trajectories in $L_{01}\times_{M_1} L_{12}$ starting at a Morse critical point) attached (possibly via trees of squashed eights and finite Morse edges) to the middle seam.
Then (B5') indicates that the Morse chain
\begin{equation}\label{b02}
b_{02} \in CF(L_{01}\circ L_{12}, L_{01}\circ L_{12}) \coloneqq CM(L_{01}\times_{M_1} L_{12})
\end{equation}
should be defined by counting regular isolated Morsified figure eight bubble trees
as in \S\ref{ss:Morse}.

Once such operations are defined, \eqref{eq:mu1relation} identifies (up to chain homotopy equivalence) the quilted Floer chain complexes
\begin{align*}
 CF\bigl(L_0, L_{01}, L_{12}, L_2 \bigr)  
\;\simeq\;
CF\bigl(L_0, (L_{01} \circ L_{12}, b_{02}), L_2 \bigr),
\end{align*}
where the differential on the left hand side is $\mu^1_{(\delta = 1)}$ and the differential on the right hand side is the twisted differential
$\partial_{b_{02}}\coloneqq \sum_{k\ge 0}  \mu^{1 | k}_{(\delta = 0)} (\, - \, |\,  b_{02},\ldots, b_{02})$.
Here the right hand side treats $L_{01} \circ L_{12}$ as an immersion.
If this is an embedding then the right hand side is the Floer chain complex of the Lagrangian $L_{01} \circ L_{12}\subset M_0^-\times M_2$ twisted by the Morse chain $b_{02}$.
An example of a contribution to the twisted Floer differential is Figure~\ref{fig:prediction} without the middle tree.
This result is meaningful if the cyclic Lagrangian correspondence \(L_0, L_{01}, L_{12}, L_2\) is naturally unobstructed in the sense that the differential $\partial =\mu^1_{(\delta = 1)}$ satisfies $\partial^2= 0$.
In particular, it asserts that the twisted differential on the right hand side satisfies \(\partial_{b_{02}}^2 = 0\).
To understand more intrinsically why the twisted differential squares to zero, we need to go into the $A_\infty$ algebra.

\medskip
\noindent
{\bf Remark on $\mathbf{A_\infty}$ 
terminology:} {\it  In the upcoming sections, we will denote by $\bigl(\mu^d_{01}\bigr)_{d\ge 0}$, $\bigl(\mu^d_{12}\bigr)_{d\ge 0}$, resp.\ $\bigl(\mu^d_{02}\bigr)_{d\ge 0}$ the curved $A_\infty$-algebras associated to $L_{01}$, $L_{12}$, resp.\  $L_{01}\circ L_{12}$, constructed on Morse chain complexes as outlined in Remark~\ref{rmk:disks}.
If working with the latter, we will usually assume that $L_{01}\circ L_{12}$ is embedded, though there are extensions to multiply covered and even cleanly immersed cases, as outlined in Remark~\ref{rem:immfuk}.

Moreover, we will call \(b \in CF(L,L)\) a \emph{bounding cochain for the Lagrangian \(L\)} if it satisfies the Maurer--Cartan equation \(\sum_{d\geq 0} \mu^d(b,\ldots,b) = 0\).
When a quilted Floer differential is twisted by bounding cochains for each Lagrangian correspondence, it will square to zero.
It is however also possible that twisting with more general cochains yields a chain complex.
}
\medskip

\begin{figure}
\centering
\includegraphics[width=\columnwidth]{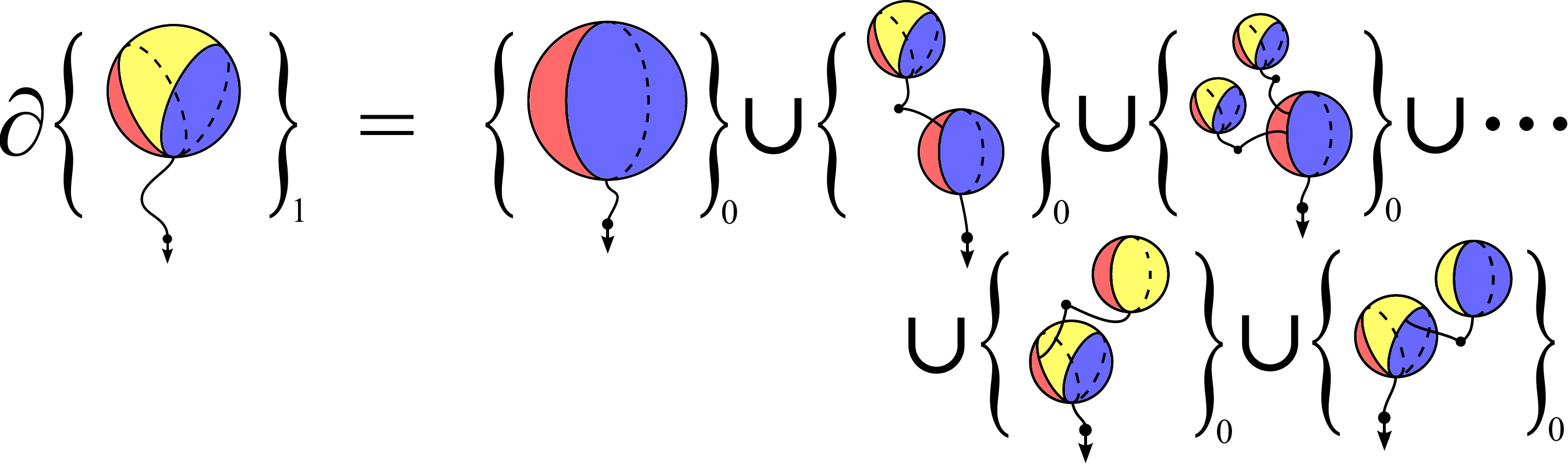}
\caption{
The expected boundary strata of a Morsified figure eight bubble tree
moduli space explain various algebraic identities in Remark~\ref{rmk:b02}.
If contributions from disk bubbles on $L_{01},L_{12}$ can be excluded (i.e.\ \(\mu_{01}^0 = \mu_{12}^0 = 0\)) then the element \(b_{02} \in CM(L_{01}\times_{M_1} L_{12})\) should solve the Maurer--Cartan equation for \(L_{01} \circ L_{12}\).
For monotone, embedded composition, the figure eight bubbles can be excluded to explain the identity $n_{L_{01}} + n_{L_{12}} = n_{L_{01}\circ L_{12}}$ between disk counts.
\label{fig:MC}
}
\end{figure}

\begin{remark} \label{rmk:b02}
The vanishing \(\partial_{b_{02}}^2 = 0\) generally follows from the identification of differentials $\partial=\partial_{b_{02}}$ in \eqref{eq:mu1relation} together with the assumption $\partial^2=0$. 
In more special cases we expect this to be a consequence of \(b_{02} \in CF(L_{01}\circ L_{12}, L_{01}\circ L_{12})\) being a bounding cochain, i.e.\ satisfying the Maurer--Cartan equation \(\sum_{d=0}^\infty \mu_{02}^d(b_{02},\ldots,b_{02}) = 0\).
(Here we assume \(L_{01} \circ L_{12}\) to be an embedded composition, though this remark should extend to the cleanly-immersed setting.)
This should follow from a cobordism argument illustrated in Figure~\ref{fig:MC}: Consider the 1-dimensional moduli space of Morsified figure eight bubble trees
between \(L_{01}\) and \(L_{12}\), with a half-infinite outgoing Morse trajectory on \(L_{01}\times_{M_1}L_{12}\) attached to its singularity.
Extrapolating from the boundary analysis in \S\S\ref{ss:HF}--\ref{ss:Morse}, we expect the 0-dimensional boundary strata to come in two types (where we simplify somewhat for the sake of clarity -- for instance, we do not mention trees of disk bubbles in which all Morse edges are unbroken): 
\begin{itemize}
\item 
A sequence of strata for each $d\geq 0$ is given by squashed eights with seam in \(L_{01}\times_{M_1} L_{12}\), with one outgoing half-infinite Morse trajectory on \(L_{01}\times_{M_1}L_{12}\) attached to the singularity, and \(d \geq 0\) figure eight bubbles attached to the seam via once-broken Morse trajectories on \(L_{01} \circ L_{12}\).
The formal sum over the limiting critical points of the outgoing trajectories of such isolated solutions yields \(\mu_{02}^d(b_{02},\ldots,b_{02})\).

\item The remaining strata are given by figure eights with one outgoing half-infinite Morse trajectory attached to the singularity, and a disk bubble mapping to \((M_k^-\times M_{k+1}, L_{k(k+1)})\) for either $k=0$ or $k=1$ attached to the \(L_{k(k+1)}\)-seam via a once-broken Morse trajectory on \(L_{k(k+1)}\).
The formal sum over the limiting critical points of such isolated solutions yields \(C^2(\:|\: \mu_{01}^0)\) resp.\ \(C^2(\mu_{12}^0 \:|\:)\) when \(k = 0\) resp.\ \(k=1\), where \(C^2\) is the curved \(A_\infty\)-bifunctor whose blueprint we sketch in \S\ref{ss:algebra2}.
\end{itemize}

As boundaries of a $1$-dimensional moduli space, the algebraic contributions of these boundary strata should sum to zero.
(In fact, this equation is one of the curved $A_\infty$-bifunctor relations satisfied by $C^2$, and the cobordism argument we have just made is a special case of the argument made in \S\ref{ss:algebra2}.)
In the special case \(\mu_{01}^0 = \mu_{12}^0 = 0\) (i.e.\ when there are no disk bubbles on $L_{01}$ or $L_{12}$, or their contributions cancel) this yields the expected Maurer--Cartan equation \(\sum_{d\geq 0} \mu_{02}^d(b_{02},\ldots,b_{02}) = 0\).  

This also illuminates an identity between disk counts noted in \cite[Remark~2.2.3]{isom}: Working with monotone orientable Lagrangians and embedded composition, one expects both differentials $\partial_{\delta}\coloneqq\mu^1_{(\delta>0)}$ and $\partial_{0}\coloneqq\mu^1_{(\delta=0)}$ to square to multiples of the identity, $\partial_{\delta}^2 = w_\delta {\rm id}$ resp.\ $\partial_{0}^2 = w_0 {\rm id}$, with 
$w_\delta= n_{L_0} + n_{L_{01}} + n_{L_{12}} + n_{L_2}$ resp.\ $w_0= n_{L_0} + n_{L_{01}\circ L_{12}} + n_{L_2}$ given by sums of counts $n_L$ of Maslov index $2$ disks through a generic point on the Lagrangian $L$.
Arguing by strip shrinking identifying the differentials, \cite{isom} concluded $w_\delta=w_0$ and hence $n_{L_{01}} + n_{L_{12}} = n_{L_{01}\circ L_{12}}$.
This identity can now also be seen directly from the above cobordism argument: Monotonicity excludes nonconstant figure eight bubbles, which reduces the boundary strata on the right hand side of Figure~\ref{fig:MC} to the first and last two types, corresponding to $n_{L_{01}\circ L_{12}}$ and $n_{L_{01}}$, $n_{L_{12}}$, respectively.
\end{remark}

Next, we relax the unobstructedness to the assumption that the Lagrangians \(L_0, L_{01}, L_{12}, L_2\) are equipped with bounding cochains \(\underline{b}=(b_0, b_{01}, b_{12}, b_2)\) so that the twisted differential $\partial_{\underline{b}}$ (which arises from adding marked points labeled with $b_0$, $b_{01}$, $b_{12}$, resp.\ $b_2$  to the $\delta=1$ quilted Floer trajectories) satisfies $\partial_{\underline{b}}^2=0$.
Then we may add these bounding cochains to the previous strip-shrinking moduli space as incoming Morse edges whose starting points represent $b_0, b_{01}, b_{12}$, resp.\ $b_2$ and whose endpoints correspond to marked points on the seams labeled $L_0, L_{01}, L_{12}$, resp.\ $L_2$ anywhere on the quilted Floer trajectory or the attached bubble trees.
Then an analogous cobordism argument yields a chain homotopy equivalence
\begin{align*}
CF\bigl( (L_0,b_0), (L_{01},b_{01}), (L_{12},b_{12}), (L_2,b_2) \bigr)
\; \simeq\; 
CF\bigl( (L_0,b_0), \bigl(L_{01}\circ L_{12}, {\textstyle \sum_{k,\ell\geq0}} b_{02}^{k|\ell}\bigr), (L_2,b_2) \bigr),
\end{align*}
where the Morse chains \(b_{02}^{k|\ell} \in CF(L_{01}\circ L_{12},L_{01}\circ L_{12})\) are obtained by adding incoming Morse edges to the figure eight bubble trees that define \(b_{02}\eqqcolon b_{02}^{0|0}\). 
More precisely, we allow more general underlying metric tree-pairs, which may now have leaves with dashed outgoing edges; the Morse bubble trees are modified by attaching
\(k\) incoming Morse edges representing $b_{01} \in CF(L_{01},L_{01})$ to the $L_{01}$ seams
anywhere on the bubble tree, and we attach \(\ell\) incoming Morse edges representing $b_{12}$ to the $L_{12}$ seams.
Figure~\ref{fig:prediction} provides an example of Morsified figure eight contributions to the twisted Floer differential for the composed Lagrangian correspondences on the right-hand side of the equivalence.  

This demonstrates, as advertised in the introduction, that the isomorphism of quilted Floer homologies \eqref{eq:HFiso} should generalize in a straightforward fashion to the nonmonotone, cleanly-immersed case as isomorphism of quilted Floer homologies with twisted differentials,
\begin{align*}
HF\bigl( \ldots , (L_{01},b_{01}), (L_{12},b_{12}), \ldots \bigr)
\; \simeq\; 
HF\bigl( \ldots , (L_{01}\circ L_{12}, 8(b_{01},b_{12}) ), \ldots \bigr),
\end{align*}
in which the chain $8(b_{01},b_{12})$ for the composed Lagrangian is obtained from moduli spaces of
Morse bubble trees with inputs $b_{01}$ and $b_{12}$, as described above.
In particular, the chain $8(0,0)=b_{02}$ in \eqref{b02} for vanishing inputs is a generally nonzero count of Morsified figure eight bubble trees.

\begin{remark}
The cobordism argument in Remark~\ref{rmk:b02} that $b_{02}$ satisfies the Maurer--Cartan equation
can be adapated to the situation that \(L_{01}, L_{12}\) are equipped with bounding cochains \(b_{01} \in CF(L_{01}, L_{01})\), \(b_{12} \in CF(L_{12},L_{12})\):
This time, for every \(k, \ell \geq 0\) we consider 1-dimensional moduli spaces of Morsified figure eight bubble trees with one outgoing half-infinite Morse trajectory attached to the singularity, and \(k\) resp.\ \(\ell\) incoming half-infinite Morse trajectories attached to the \(L_{01}\)-seam resp.\ the \(L_{12}\)-seam.
The algebraic contributions of the boundary strata with incoming Morse critical points representing \(b_{01}\) and \(b_{12}\) should sum to zero. 
Summing over all \(k, \ell \geq 0\), we obtain the expected equation \begin{align*}
&\sum_{k, \ell \geq 0}\sum_{k_1+\cdots+k_d=k,\atop
\ell_1+\cdots+\ell_d=\ell} \mu_{02}^d\bigl(b_{02}^{k_d|\ell_d}, \ldots, b_{02}^{k_1|\ell_1}\bigr) \\
&\hspace{1in} = \sum_{k,\ell\geq 0}\sum_{a+d\leq k} C^2\Bigl(\underbrace{b_{12}, \ldots, b_{12}}_\ell \:|\: \underbrace{b_{01}, \ldots, b_{01}}_{k-a-d}, \mu_{01}^d(b_{01}, \ldots, b_{01}), \underbrace{b_{01}, \ldots, b_{01}}_a\Bigr) \\
&\hspace{1.5in} + \sum_{k,\ell\geq 0}\sum_{a+d\leq \ell} C^2\Bigl(\underbrace{b_{12}, \ldots, b_{12}}_{\ell-a-d},\mu_{12}^d(b_{12},\ldots,b_{12}),\underbrace{b_{12},\ldots,b_{12}}_a \:|\: \underbrace{b_{01}, \ldots, b_{01}}_k\Bigr).
\end{align*}
The right side vanishes, after a reorganization, by the Maurer--Cartan equations for $b_{01}$ and $b_{12}$.
Then a reorganization of the left side yields the expected Maurer--Cartan equation for \(L_{01}\circ L_{12}\),
\begin{align*}
\textstyle\sum_{d\geq 0} \mu_{02}^d\Bigl( {\textstyle\sum_{k,\ell\geq0}} b_{02}^{k|\ell}, \ldots, {\textstyle\sum_{k,\ell\geq0}} b_{02}^{k|\ell}\Bigr) = 0.
\end{align*}
\end{remark}

\subsection{Conjectural categorical framework for geometric composition} \label{ss:algebra2}

\begin{figure}
\centering
\def\svgwidth{6in}
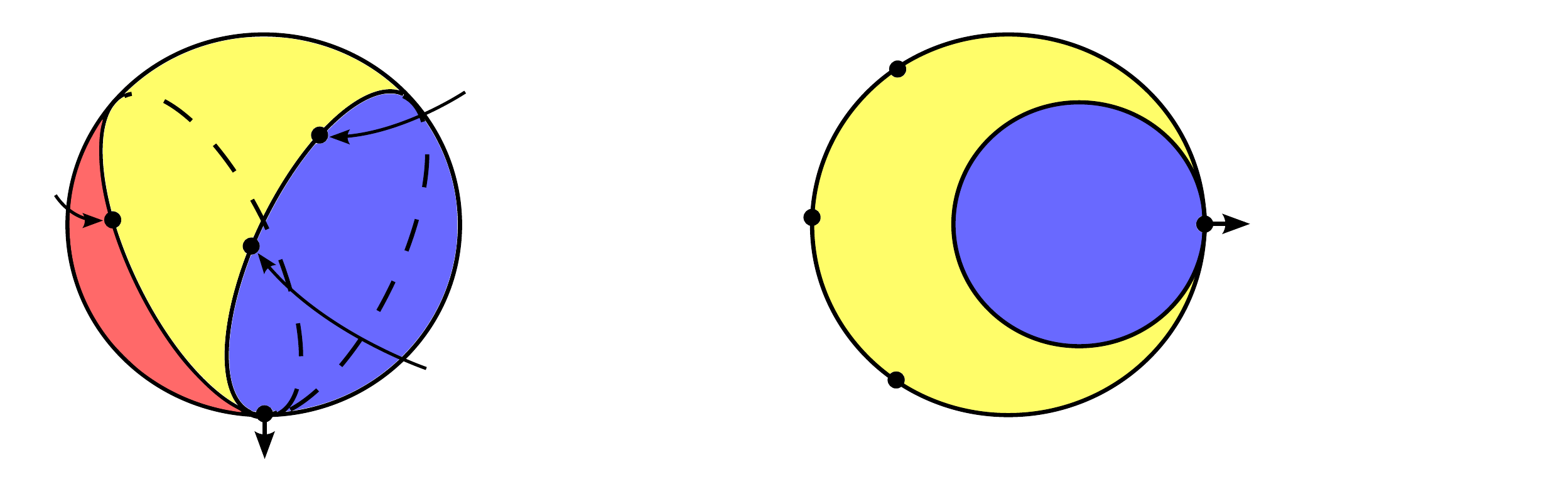
\caption{The \(A_\infty\)-bifunctor \(\comp\) will be defined by counting figure eight quilts with input marked points on the seams as on the left of this figure. 
The \(A_\infty\)-functor \(\comp_{L_{01}}\) is then obtained by setting \(M_2\coloneqq\pt\), which amounts to eliminating the \(M_2\)-patch and replacing the Lagrangian correspondences \(L_{12}^i\) with Lagrangians
\(L_1^i \subset M_1^-\).
So \(\comp_{L_{01}}\) is defined by counting quilted disks as on the right of this figure.
\label{fig:functordef}
}
\end{figure}

The bounding cochains \(b_{02}^{k|\ell}\) in the previous section can be viewed as the result of a map $CF(L_{12},L_{12})^{\otimes k} \otimes CF(L_{01},L_{01})^{\otimes\ell} \to CF(L_{01}\circ L_{12},L_{01}\circ L_{12})$ applied to $ b_{12}^{\otimes k} \otimes b_{01}^{\otimes \ell}$.
More generally, we propose that the figure eight should not be viewed as an undesirable obstruction, but as a geometric object whose raison d'\^{e}tre is to give rise to maps between Floer chain groups involving geometric composition.  

For that purpose note that the moduli spaces of quilted disks constructed in \cite{mw} to represent Stasheff's multiplihedra can be viewed as configuration spaces of marked points on one of the two seams of the domain of figure eight bubbles.
Allowing for marked points on both seams is analogous to the construction of ``biassociahedra'' in \cite{mww}, and will again yield families of marked quilted surfaces parametrized by polyhedra.
We can then build moduli spaces of pseudoholomorphic quilts whose domains are given by points in a biassociahedron. 
More precisely, we modify the figure eight bubble by placing \(d \geq 0\) input marked points on the \(12\)-seam (between the \(M_1\) patch and the \(M_2\) patch), \(e \geq 0\) input marked points on the \(01\)-seam, and one output marked point at the singular point of the quilt (the left half of Figure~\ref{fig:functordef} is the \(d=1, e=2\) case).
The \(12\)-seam is now divided into \(d+1\) segments which we label by Lagrangians \(L_{12}^0, \ldots, L_{12}^d \subset M_1^- \times M_2\).
Similarly, we label the segments of the \(01\)-seam by \(L_{01}^0, \ldots, L_{01}^e \subset M_0^- \times M_1\).
Given a finite-energy pseudoholomorphic quilt with this domain, and assuming that \(L_{01}^0 \circ L_{12}^0\) and \(L_{01}^e \circ L_{12}^d\) are cleanly immersed and intersect each other cleanly, it follows\footnote{
\cite{b:singularity} considers figure eight bubbles with no input marked points (i.e.\ with seams mapping to single Lagrangians \(L_{01}, L_{12}\)).
However, the proof is local at the singularity, so it also applies to figure eights with marked points.} from 
\cite[Removal of Singularity Thm.~2.2]{b:singularity} that the limit of the three maps at the output marked point is a generator of \(CF(L_{01}^0 \circ L_{12}^0, L_{01}^e \circ L_{12}^d)\).
The 0-dimensional moduli space of such quilts should therefore define a map
\begin{figure}
\centering
\def\svgwidth{6in}
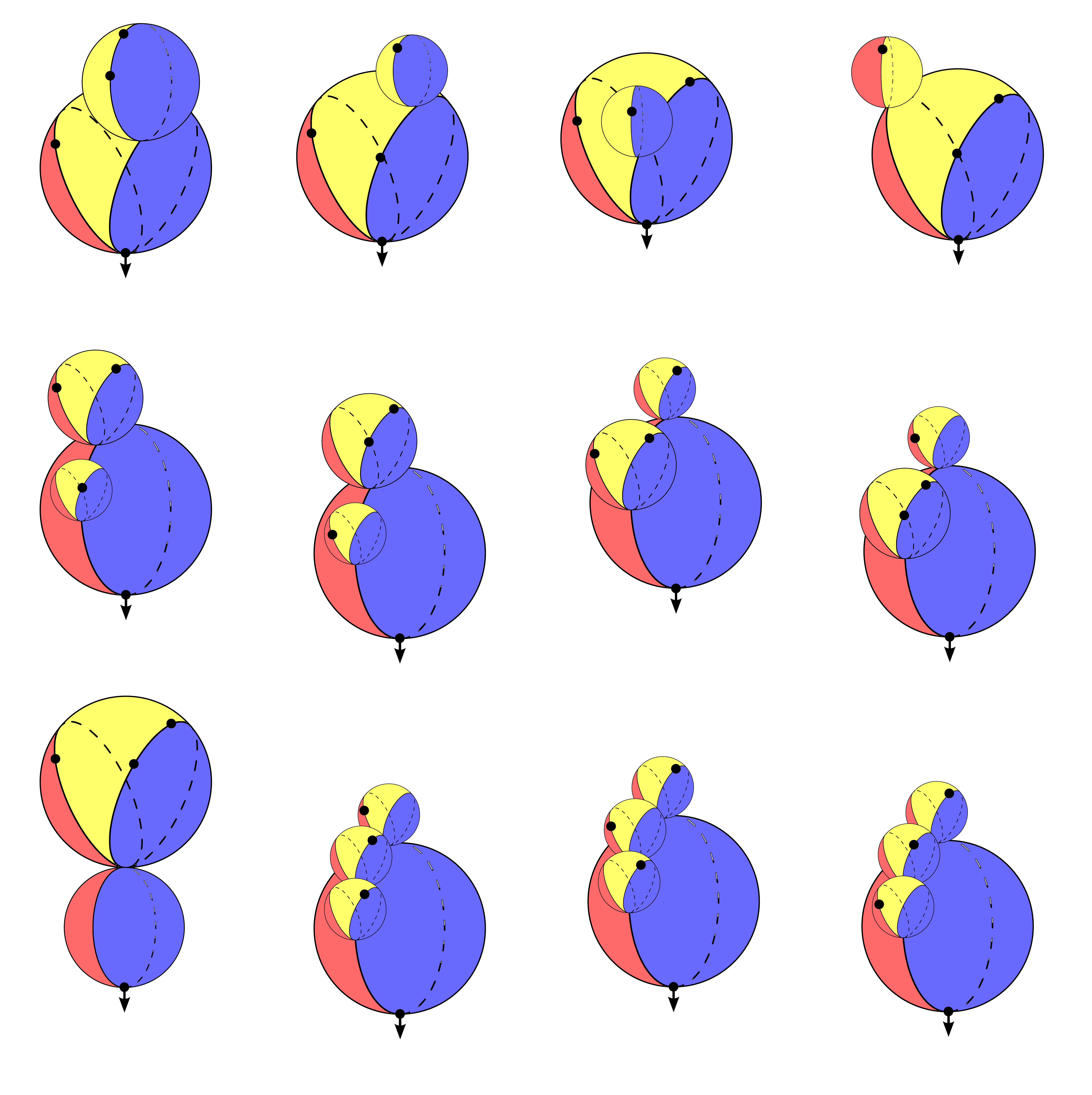
\caption{
These quilted surfaces represent on the one hand the algebraic expressions in the bifunctor relation of Conjecture~\ref{conj:bifunctor} (with the exception of curvature terms), and on the other hand the expected boundary strata of the 1-dimensional moduli space of figure eight bubbles with one marked point on the \(01\)-seam and two on the \(12\)-seam (with the exception of bubbling that does not involve marked points).
\label{fig:bifunctorrel}
}
\end{figure}
\begin{align}
& \comp(- \:|\: -) \; \colon \;  
CF(L_{12}^{d-1}, L_{12}^d) \otimes \cdots \otimes CF(L_{12}^0, L_{12}^1)  \nonumber \\
&\hspace{1in} 
\otimes CF(L_{01}^{e-1}, L_{01}^e) \otimes \cdots \otimes CF(L_{01}^0, L_{01}^1)
\;\;\lra\;\; CF(L_{01}^0 \circ L_{12}^0, L_{01}^e \circ L_{12}^d). \label{eq:bifunctor}
\end{align}
The boundaries of 1-dimensional moduli spaces of such quilts will then give rise to a collection of relations among these maps. These boundary components will arise from several effects.
Firstly, the underlying biassociahedron of quilted surfaces has boundary.
In the example of Figure~\ref{fig:bifunctorrel}, its boundary strata correspond to the eight algebraic terms not involving $\mu^1$-terms.
Secondly, Floer trajectories can break off at any input marked point.
In the example of Figure~\ref{fig:bifunctorrel}, this corresponds to the three algebraic terms in the first row involving pre-composition with $\mu^1_{01}$ or $\mu^1_{12}$.
Moreover, energy concentrating at the outgoing marked point (where in cylindrical coordinates two pairs of $01$- and $12$-seams approach each other asymptotically) can be captured geometrically as a Floer trajectory for the composed Lagrangians breaking off.
In the example of Figure~\ref{fig:bifunctorrel}, this corresponds to the algebraic term in the bottom left corner involving post-composition with $\mu^1_{02}$.
Together, these algebraic terms capture the relations describing an \(A_\infty\)-bifunctor.

Finally, energy can concentrate without marked points being involved, yielding sphere, disk, or figure eight bubbles.
Spheres will be interior points of the ambient polyfold, hence do not contribute to the algebraic relation.
Disk bubbling can appear on a $01$- or $12$-seam, yielding algebraic terms involving pre-composition with $\mu_{01}^0$ or $\mu_{12}^0$, which reflect curvature of an \(A_\infty\)-algebra associated to a Lagrangian $L_{01}^i$ or $L_{12}^i$.
Since figure eight bubbling does not add to the corner index, we expect additional boundary faces arising from adding any number of figure eight bubbles without marked points to the $02$-seams of these configurations.
Algebraically, this will be reflected by \(\comp( \: | \: )\) terms (with $d=e=0$) in any number of entries of \(\mu_{02}^k\), meaning that the \(A_\infty\)-bifunctor itself is curved.
More precisely, we expect for each \(d, e \geq 0\) and fixed $(y_d,\ldots,y_1), (x_e,\ldots,x_1)$ the following relation:
\begin{align} \label{eq:Aoobifun}
& \sum
\comp\bigl(y_d, \ldots, y_{k+\ell+1}, \mu_{12}^\ell(y_{k+\ell},\ldots, y_{k+1}), y_k, \ldots, y_1 \:|\: x_e, \ldots, x_1\bigr) \:\:\: + \nonumber \\
&\hspace{0.15in} + \: \sum
\comp\bigl(y_d, \ldots, y_1 \:|\: x_e, \ldots, x_{k+\ell+1}, \mu_{01}^\ell(x_{k+\ell}, \ldots, x_{k+1}), x_k, \ldots, x_1\bigr) \nonumber \\
&\hspace{0.3in} = \sum \mu_{02}^n\bigl(
\comp(y_{j_n}, \ldots, y_{j_{n-1}+1} \:|\: x_{i_n}, \ldots, x_{i_{n-1}+ 1}), \ldots,
\comp(y_{j_1}, \ldots, y_{j_0+1} \:|\: x_{i_1}, \ldots, x_{i_0+1}) \bigr).
\end{align}
Here the first sum is over $0\le k \le d, 0\le \ell\le d-k$, the second sum is over $0\le k \le e, 0\le \ell\le e-k$, and the right hand sum is over $n\ge 1$ and order respecting partitions into $n$ (possibly empty) tuples $(x_e,\ldots,x_1)=(x_{i_n},\ldots,x_{i_{n-1}+1}) \cup \ldots \cup (x_{i_1},\ldots,x_{i_0+1})$, and
$(y_d,\ldots,y_1)=(y_{j_n},\ldots,y_{j_{n-1}+1}) \cup \ldots \cup (y_{j_1},\ldots,y_{j_0+1})$.
For example, the $(d,e)=(0,0)$ relation is
\begin{align*}
\comp(\mu_{12}^0 \:|\: )  + \comp( \:|\: \mu_{01}^0)  =
{\textstyle \sum_{n=1}^\infty} \; \mu_{02}^n\bigl(\comp( \:|\: ), \ldots, \comp( \:|\: )\bigr).
\end{align*}
The twelve summands not involving curvature terms in the \(d = 1, e = 2\) case, together with their corresponding boundary strata, are shown in Figure~\ref{fig:bifunctorrel}.
These relations are exactly what is required of a curved \(A_\infty\)-bifunctor \cite[Def.~8.8]{pretriangulated}, so we are led to the following conjecture.

\begin{conjecture} \label{conj:bifunctor}
For compact symplectic manifolds \(M_0, M_1, M_2\) there is a curved \(A_\infty\)-bifunctor \begin{align*}
\comp\colon (\Fuk(M_1^- \times M_2), \Fuk(M_0^- \times M_1)) \to \Fuk(M_0^- \times M_2)
\end{align*}
between Fukaya categories of cleanly-immersed Lagrangians that associates to a pair of Lagrangians \((L_{12}, L_{01})\) its geometric composition \(L_{01} \circ L_{12}\) -- if cleanly-immersed -- and is defined on the morphism level by the maps \(\comp(- \:|\: -)\) in \eqref{eq:bifunctor} if all composed Lagrangians have clean intersections.
\end{conjecture}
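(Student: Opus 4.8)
The plan is to realize the maps $\comp(-\:|\:-)$ from \eqref{eq:bifunctor} as operations counting isolated points in regularized moduli spaces of pseudoholomorphic figure eight quilts with input marked points on the two seams and an output marked point at the singularity, and to deduce the bifunctor relations \eqref{eq:Aoobifun} from the cobordism principle of Remark~\ref{rmk:codim}. First I would fix, for each $d,e\ge 0$, the biassociahedron-parametrized family of quilted surfaces obtained by placing $d$ input marked points on the $12$-seam and $e$ on the $01$-seam of the figure eight domain of Figure~\ref{fig:viewsof8}; these families and their degenerations should be the ones realizing the ``biassociahedra'' of \cite{mww}, generalizing the multiplihedra of \cite{mw}. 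Next I would build an ambient polyfold whose points are tuples of (not necessarily pseudoholomorphic) quilted maps together with Morse edges, using the pre-gluing constructions sketched in \S\ref{ss:boundary} and the colored metric ribbon tree bookkeeping of \S\ref{ss:Morse}; the key structural input is that gluing a figure eight is governed by a single strip-width parameter $\delta$, while disk and squashed-eight gluings contribute independent neck parameters, so that the corner structure of the polyfold matches the predicted strata on page~\pageref{BBs}. The quilted Cauchy--Riemann operator together with the Morse matching conditions then defines a Fredholm section whose zero set is the compactified moduli space.

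For compactness of these zero sets I would invoke the Gromov Compactness Theorem~\ref{thm:rescale}, which in the limit of strip shrinking already produces figure eight and squashed eight bubbles and, via the energy quantization of Lemma~\ref{lem:hbar}, bounds the number of bubbling points, combined with the continuous removal of singularity for figure eight and squashed eight bubbles from \cite[Thm.\ 2.2]{b:singularity} --- valid under the cleanly-immersed hypothesis --- to turn the limiting punctured quilts into genuine stable tree configurations with boundary values in the composed Lagrangians; the ``bubbles connect'' arguments of \cite{b:thesis} for long cylinders then give the full tree compactification. With a compact Fredholm setup in hand, the universal regularization theory of \cite{hwz:fred2} supplies transverse (multivalued) perturbations compatible with the corner index, so the index-$0$ part of each perturbed moduli space is a finite weighted branched point set defining $\comp(-\:|\:-)$, while the index-$1$ part has boundary equal to the union of its corner-index-$1$ strata. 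Reading these strata off from the gluing parameters --- Floer breakings at input marked points give the $\mu^1_{01},\mu^1_{12}$ terms; energy concentration at the output marked point gives post-composition with $\mu^1_{02}$; the biassociahedron boundary gives the composite $\mu^n_{02}(\comp,\ldots,\comp)$ terms; disk bubbling on the $01$- or $12$-seams gives the $\mu^0_{01},\mu^0_{12}$ curvature terms; and figure eight bubbles, which do not raise the corner index, produce the curvature entries $\comp(\:|\:)$ inside the $\mu^n_{02}$ --- yields exactly \eqref{eq:Aoobifun}, i.e.\ the axioms of a curved $A_\infty$-bifunctor in the sense of \cite[Def.~8.8]{pretriangulated}.

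The hard part will be the analytic core: establishing the nonlinear Fredholm property of the quilted Cauchy--Riemann operator in the polyfold chart near a figure eight (or squashed eight) bubble. Because the two seams meet tangentially at the singularity after stereographic compactification --- rather than transversally, as would be the case for an honest striplike end --- the linearized operator is not modeled on a standard cylindrical end, and one must instead use the width-independent elliptic estimates of \cite{b:singularity} (restated here as Theorem~\ref{thm:nonfoldedstripshrink}) together with the squiggly-strip reparametrizations of \S\ref{sec:rescale} to see that pre-gluing in the strip-width parameter produces charts in which the Cauchy--Riemann operator is, up to the usual quadratic error, a Fredholm-stable perturbation of the pair consisting of the shrinking-strip operator and the figure eight operator; the required quadratic estimates should follow by combining the gluing analysis of \cite{isom} with \cite{b:singularity}, but making this precise --- and checking it interacts correctly with the Morse-edge and disk/sphere-bubble strata and with the isotropy forced by sphere bubbling --- is the substantial work, to be carried out in \cite{b:thesis,bw:bigkahuna}. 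A secondary difficulty is combinatorial: verifying that the colored metric ribbon trees with ghost-eight leaves of \S\ref{ss:Morse} realize Stasheff's multiplihedra/biassociahedra with the correct face poset, so that the sum over corner-index-$1$ faces reproduces \eqref{eq:Aoobifun} exactly rather than some truncation.
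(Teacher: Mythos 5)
Your proposal reconstructs essentially the same semi-rigorous program as the paper's own discussion in \S\ref{ss:algebra2}: biassociahedron-parametrized figure eight domains defining $\comp(-\:|\:-)$, a polyfold setup whose corner structure is read off the pre-gluing parameters of \S\ref{ss:boundary}--\S\ref{ss:Morse}, compactness from Theorem~\ref{thm:rescale} together with \cite{b:singularity}, regularization via \cite{hwz:fred2}, and the bifunctor relation \eqref{eq:Aoobifun} obtained by summing over the corner-index-$1$ strata of Figure~\ref{fig:bifunctorrel}. You correctly identify both the central structural point (figure eight gluing consumes the strip-width parameter rather than an independent neck length, so figure eights contribute curvature rather than codimension) and the deferred analytic core (the Fredholm property at the tangential quilt singularity), matching the paper's own framing of this as a conjecture whose proof is deferred to \cite{b:thesis,bw:bigkahuna}.
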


Morse edges can be incorporated into the above discussion of $C^2$ to overcome the differential-topological challenges of constructing these structures, but they would not have contributed to the clarity of the algebraic exposition.

\medskip

\begin{remark}[Immersed Fukaya categories] \label{rem:immfuk}
Conjecture~\ref{conj:bifunctor} is naturally stated in the immersed setting: Even if the source Fukaya categories were chosen to contain only embedded Lagrangians as objects, the target Fukaya category would still need to contain immersed Lagrangians, since Hamiltonian perturbation of Lagrangian correspondences \(L_{01}, L_{12}\) can always achieve cleanly-immersed but generally not embedded composition.
The Fukaya categories in Conjecture~\ref{conj:bifunctor} will have as objects immersed Lagrangians \(\varphi\colon L \to M\) with clean self-intersections; boundary conditions in \(L\) for a map \(u\colon \Sigma \to M\) on \(\gamma \subset \partial \Sigma\) will require the data of a continuous lift of \(u|_\gamma\) to \(L\).
Cleanly-immersed Lagrangian boundary conditions have been discussed in various settings before, see e.g.\ \cite{a:immersed, aj, chan, cel:switch, se:genus2}, but \cite{bw:bigkahuna} will develop both analysis and algebra from scratch -- the first since we need an abstract regularization framework (such as polyfold theory) to work in general symplectic manifolds, 
and the second since our version of the immersed Fukaya category requires control of sheet-switching at self-intersection points in terms of chain labels which encode contributions from sheet-switching figure eight bubbles.
\end{remark}

\begin{remark}
Special cases of Conjecture~\ref{conj:bifunctor} will yield \(A_\infty\)-functors similar to the ones constructed in \cite{mww}.
The main differences are that \cite{mww} works with \emph{extended} Fukaya categories (whose objects are composable sequences of embedded Lagrangian correspondences) and is necessarily limited to settings (such as monotonicity) in which figure eight bubbling is excluded.  

\begin{enumlist}
\item[(i)]
For \(M_2=\pt\) the restriction of \(\comp\) to a fixed unobstructed object \(L_{01}\in \Fuk(M_0^-\times M_1)\) yields a curved \(A_\infty\)-functor \(\comp_{L_{01}}\colon \Fuk(M_1^-) \to \Fuk(M_0^-)\).
On the object level, this functor sends \(L_1 \subset M_1^-\) to \(L_{01} \circ L_1 \subset M_0^-\) if this composition is cleanly immersed; on the morphism level, this functor is defined by the maps \(\comp( - \:|\: )\) arising from the moduli spaces of quilted disks with a nonnegative number of marked points on the boundary circle, as on the right of Figure~\ref{fig:functordef}.
\item[(ii)]
For \(M_0=\pt\) the restriction of \(\comp\) to a fixed unobstructed object \(L_{12}\in \Fuk(M_1^-\times M_2)\) yields a curved \(A_\infty\)-functor \(\phantom{.}_{L_{12}}{\comp}\colon \Fuk(M_1) \to \Fuk(M_2)\) that sends \(L_1 \subset M_1\) to \(L_1 \circ L_{12} \subset M_2\) if this composition is cleanly immersed, and is defined by the maps \(\comp( \:|\: - )\) on morphism level.
 \item[(iii)] 
We expect the special case $\comp_{\Delta_M}$ resp.\ $\!\!\phantom{.}_{\Delta_M}{\comp}$ of both functors to be the identity functor on $\Fuk(M\times \pt)\cong \Fuk(M)\cong\Fuk(\pt^-\times M)$.
More generally, we will show in \cite{bw:bigkahuna} that \((\comp_{L_{12}^T}, \!\phantom{.}_{L_{12}} \comp)\) and \((\!\phantom{.}_{L_{12}}{\comp}, \comp_{L_{12}^T})\) form adjoint pairs, where \(L_{12}^T\subset M_2^-\times M_1\) is obtained from \(L_{12}\subset M_1^-\times M_2\) by exchange of factors.
\end{enumlist}
\end{remark}

The constructions we have described should also have algebraically flat (i.e.\ uncurved) incarnations, by a general construction:
Given a curved \(A_\infty\)-category \(\cA\), one can form its associated flat \(A_\infty\)-category \(\ol\cA\), an object of which is an object \(X\) of \(\cA\) together with a bounding cochain \(b \in \hom_{\cA}(X,X)\).
A curved \(A_\infty\)-multifunctor \(F\colon (\cA_k,\ldots,\cA_1) \to \cB\) induces a flat \(A_\infty\)-multifunctor \(\ol F\colon (\ol \cA_k, \ldots, \ol \cA_1) \to \ol \cB\), where \(\ol F\) is defined on the object level by \begin{align*}
\ol F\bigl((X_k, b_k), \ldots, (X_1, b_1)\bigr) \coloneqq \Bigl( F(X_k,\ldots,X_1), \sum_{\ell_1, \ldots, \ell_k \geq 0} F\Bigl(\underbrace{b_k, \ldots, b_k}_{\ell_k} \:|\: \cdots \:|\: \underbrace{b_1, \ldots, b_1}_{\ell_1}\Bigr) \Bigr).
\end{align*}
Hence \(\comp\), once defined, descends to a flat \(A_\infty\)-bifunctor \begin{align*}
\ol \comp\colon (\ol\Fuk(M_1^- \times M_2), \ol\Fuk(M_0^- \times M_1)) \to \ol\Fuk(M_0^- \times M_2).
\end{align*} 
In particular, given \((L_{01}, b_{01}) \in \ol\Fuk(M_0^-\times M_1)\), we can define \(\ol \comp_{(L_{01},b_{01})}\colon\ol\Fuk(M_0) \to \ol\Fuk(M_1)\).
Observe that even if \(L_0\) and \(L_{01}\) are unobstructed, \(\comp_{L_{01}}(L_0)=L_0 \circ L_{01}\) may be obstructed, but \(\ol \comp_{(L_{01},0)}\) will specify a bounding cochain for \(L_0\circ L_{01}\) given by a count of figure eight bubbles in this context: quilted disks as on the right of Figure~\ref{fig:functordef} with no input marked points.

\medskip

Thus we hope to have demonstrated that figure eight bubbles are not only analytically necessary and manageable, but also in fact natural algebraic contributions when geometric composition of Lagrangian correspondences is involved.
We end with a brief outlook on how the full embrace of figure eight quilts, and their natural generalization, should yield a symplectic \(A_\infty\) 2-category.

\begin{remark}[Outlook on a proposed \(A_\infty\) 2-category]
\label{rmk:2cat}
A natural way to generalize the notion of figure eight quilt is to allow the seams in the domain to be any positive number \(r\) of circles that intersect tangentially at the south pole of the total domain \(S^2\), to allow any number of input marks on the seams, and to regard the south pole as an output mark.
We call such a quilt an ``\(r\)-seam witch ball''.

For $r = 1, 2, 3$ we expect that counts of the rigid $r$-seam witch balls will coincide with existing algebraic constructions involving Fukaya categories: A 1-seam witch ball with labels \(M_0\) and \(M_1\) is the same as a pseudoholomorphic disk with boundary marked points mapping to \(M_0^- \times M_1\), so counting rigid 1-seam witch balls should yield the $A_\infty$ operations $\mu^d$ in $\Fuk(M_0^-\times M_1)$.
2-seam witch balls are expected to yield the $A_\infty$ bifunctor $C^2$ discussed above, which specializes to yield e.g.\ the $A_\infty$ functor $\Phi_{L_{12}} \colon \Fuk(M_1) \to \Fuk(M_2)$ of \cite{mww}.
Finally, counts of certain rigid 3-seam witch balls should produce the homotopy $\Phi_{L_{01} \circ L_{12}} \simeq \Phi_{L_{12}} \circ \Phi_{L_{01}}$ constructed in \cite{mww}.

The structure that unifies these specializations is the collection of posets indexing the underlying domain moduli spaces.
These posets -- which are, in fact, abstract polytopes -- are called {\bf 2-associahedra} and are defined and explored in \cite{b:2ass}.
There is a 2-associahedron $W_\bn$ for every $r \geq 1$ and $\bn \in \Z_{\geq0}^r \setminus \{\mathbf 0\}$, and the elements of $W_\bn$ are the type-$\bn$ tree pairs (as described in Remark~\ref{rmk:gen_tree_pairs}).
The expected specializations to existing algebraic constructions described in the previous paragraph correspond to specializations of the 2-associahedra: For $r=1$, $W_n$ is the $(n-2)$-dimensional associahedron; for $r=2$, $W_{(n,0)}$ is the $(n-1)$-dimensional multiplihedron; and for $r=3$, $W_{n,0,0}$ is the $n$-dimensional bimultiplihedron (as in \cite{mww}).
Going beyond \cite{bw:bigkahuna}, we plan to show that the moduli spaces of pseudoholomorphic witch balls, as the number of seams ranges over all nonnegative integers, give rise to a symplectic $A_\infty$ 2-category (a notion which will be defined in \cite{b:2cat}) which comprises all compact symplectic manifolds as objects, and -- when restricted to monotone objects and morphisms -- contains the monotone symplectic 2-category in \cite{ww:cat} on homology level. 

\end{remark}

\appendix

\section{Proofs of Lemma~\ref{lem:hbar}, Proposition~\ref{prop:weak-remsing}, and Lemma~\ref{lem:rescaled-width}.}
\label{sec:squiggly}

In this appendix we give the proofs of three results from \S\S\ref{sec:bubbles}--\ref{sec:rescale}: an energy quantization for figure eight and squashed eight bubbles (Lemma~\ref{lem:hbar}), a weak removal of singularity (Proposition~\ref{prop:weak-remsing}), and some properties of rescaled width functions that were used in the proof of Theorem~\ref{thm:rescale} (Lemma~\ref{lem:rescaled-width}).

We will prove Lemma~\ref{lem:hbar} by contradiction: Given a sequence of figure eight or squashed eight bubbles with positive energy tending to zero, we rescale to produce a nonconstant tuple of maps, which is a contradiction to the scale-invariance of energy.
Here the convergence of the rescaled maps will be deduced from the following result of \cite{b:singularity}, which establishes \(\cC^\infty\)-compactness given a uniform gradient bound.
It uses the notion of a {\bf symmetric complex structure} on \([-\rho,\rho]^2\), which is a complex structure \(j\) such that the equality \begin{align*}
j(s,t) = -\sigma \circ j(s,-t) \circ \sigma
\end{align*}
holds for any \((s,t) \in [-\rho,\rho]^2\), where \(\sigma\) is the conjugation \(\alpha\partial_s + \beta\partial_t \mapsto \alpha\partial_s - \beta\partial_t\).
(The standard complex structure, for instance, is symmetric.)

\begin{theorem}[Thm.~3.1, \cite{b:singularity}] \label{thm:nonfoldedstripshrink}
There exists \(\eps > 0\) such that the following holds: Fix \(k \in \N_{\geq 1}\), positive reals \(\delta^\nu \to 0\) and \(\rho > 0\), symmetric complex structures \(j^\nu\) on \([-\rho,\rho]^2\) that converge \(\cC^\infty\) to \(j^\infty\) with \(\| j^\infty - i \|_{\cC^0} \leq \eps\), and \(\cC^{k+2}\)-bounded sequences of \(\cC^{k+2}\) almost complex structures \(J_\ell^\nu\) on \([-\rho,\rho]^2\) as in \eqref{eq:J} such that the \(\cC^{k+1}\)-limit of each \((J_\ell^\nu)\) is a \(\cC^\infty\) almost complex structure.

Then if \((v_0^\nu,v_1^\nu,v_2^\nu)\) is a sequence of size-\((\delta^\nu,\rho)\) \((J_0^\nu,J_1^\nu,J_2^\nu, j^\nu)\)-holomorphic squiggly strip quilts for \((L_{01}, L_{12})\) with uniformly bounded gradients, \begin{align*}
\sup_{\nu \in \N, \: (s,t) \in [-\rho,\rho]^2}  |\d v^\nu|(s,t) < \infty,
\end{align*}
then there is a subsequence in which \((v_0^\nu(t - \delta^\nu))\), \((v_1^\nu|_{t=0})\), \((v_2^\nu(t + \delta^\nu))\) converge \(\cC^k_\loc\) to a \((J_0^\infty, J_2^\infty,i)\)-holomorphic size-\(\rho\) degenerate strip quilt \((v_0^\infty, v_1^\infty, v_2^\infty)\) for \(L_{01} \times_{M_1} L_{12}\).

If the inequality \(\liminf_{\nu\to\infty, (s,t) \in [-\rho,\rho]^2} | \d v^\nu |(s,t) > 0\) holds, then \(v_0^\infty, v_2^\infty\) are not both constant.
\end{theorem}

\begin{proof}[Proof of Lemma~\ref{lem:hbar}]\label{proof:hbar}
We begin by proving energy quantization for the figure eight bubble.
Suppose by contradiction that there is a sequence $\ul{w}^\nu=(w_0^\nu,w_1^\nu,w_2^\nu)$ of \((J_0^\nu(\sigma^\nu,0), J_1^\nu(\sigma^\nu,0), J_2^\nu(\sigma^\nu,0))\)-holomorphic nonconstant figure eight bubbles for some \((\sigma^\nu) \subset [-\rho,\rho]\), with energy ${E(\ul{w}^\nu)\to 0}$.
Then, despite dealing with a quilted domain, we can deduce \(\lim_{\nu \to \infty} \sup_{\ell \in \{0,1,2\}} \sup | \d w^\nu_\ell | = 0\) from the mean value inequality $|\rd u(z)|^2 \leq C r^{-2} \int_{B_r(z)} |\rd u|^2$ for pseudoholomorphic maps (see e.g.\ \cite[Lemma~4.3.1]{ms:jh} or \cite[Theorem~1.3, Lemma A.1]{w:quant}).\footnote{
Here we use the metric on \(M_\ell\) that is induced by \(\omega_\ell\) and \(J_\ell^\infty\) by \eqref{eq:metrics}.
Note that for any fixed \(\nu\), the convergence \(J_\ell^\nu \to J_\ell^\infty\) implies that the norm induced by this metric is equivalent to the norm induced by \(\om_\ell\) and \(J_\ell^\nu\); furthermore, the constant of equivalence can be chosen to be independent of \(\nu\).
This in particular yields uniform constants in the mean value inequalities.
}
Indeed, it applies to each of the maps \(w_0, w_1, w_2\) on balls of radius \(\tfrac 1 2\) that do not intersect seams, and it applies to the folded maps \((w_0(s, -\frac 12 -t), w_1(s,-\frac 12 + t))\) and \((w_1(s,\frac 12 -t), w_2(s,\frac 12 + t))\) on partial balls of radius \(\tfrac 1 2\) that intersect the boundary of the domain $\R\times[0,1]$, where these maps are defined, only in $\R\times\{0\}$, where we have Lagrangian boundary conditions in $L_{01}$ resp.\ $L_{12}$.
Together, these balls cover the entire domain of the figure eight, and thus prove the uniform gradient convergence.

Next, since each triple is nonconstant we can find a subsequence (still denoted $(\ul{w}^\nu)_{\nu\in\N}$), an index $\ell_0 \in\{0,1,2\}$, and points $(s^\nu,t^\nu)$ in the domain of $w^\nu_{\ell_0}$ such that $\delta^\nu\coloneqq|\d w^\nu_{\ell_0}(s^\nu,t^\nu)|\geq \half \sup_{\ell\in\{0,1,2\}} \sup| \d w^\nu_\ell |$.
We just showed \(\delta^\nu\to 0\), and we claim that in fact \(\delta^\nu t^\nu \to 0\).
Indeed, this only requires a proof in the case \(|t^\nu| \to \infty\).
In that case we may apply the mean value inequality on balls of radius \(t^\nu - 1\) to obtain \(\delta^\nu t^\nu \to 0\).
By shifting each triple of maps in the \(s\)-direction, we may moreover assume \(s^\nu = 0\) for all $\nu\in\N$.

Now rescale \(v_\ell^\nu(s, t) \coloneqq w_\ell^\nu(s / \delta^\nu, t/\delta^\nu)\) to obtain maps \begin{align*}
v_0^\nu : \R \times (-\infty, \tfrac 1 2\delta^\nu] \to M_0, \qquad v_1^\nu: \R \times [-\tfrac 1 2\delta^\nu, \tfrac 1 2\delta^\nu] \to M_1, \qquad v_2^\nu: \R \times [\tfrac 1 2\delta^\nu, \infty) \to M_2.
\end{align*}
These maps are \(J_\ell(\sigma^\nu,0)\)-holomorphic and satisfy the following seam conditions: \begin{align*}
(v_0^\nu(s,-\tfrac 1 2\delta^\nu), v_1^\nu(s, -\tfrac 1 2\delta^\nu)) \in L_{01}, \qquad (v_1^\nu(s, \tfrac 1 2\delta^\nu), v_2^\nu(s, \tfrac 1 2\delta^\nu)) \in L_{12} \qquad \forall \: s \in \R.
\end{align*}
The rescaling was chosen to ensure an upper bound on the gradient, \(\sup_{\ell \in \{0,1,2\}} | \d v_\ell^\nu | \leq 2\), as well as a lower bound \(| \d v_{\ell_0}^\nu(0, \tau^\nu) | \geq 1\) for \(\tau^\nu: = \delta^\nu t^\nu \to 0\).
Theorem~\ref{thm:nonfoldedstripshrink} implies that the restrictions of \(v_0^\nu(s, t - \tfrac 1 2\delta^\nu)\) resp.\ \(v_2^\nu(s, t + \tfrac 1 2\delta^\nu)\) to \((-1,1) \times (-1,0]\) resp.\ \((-1,1) \times [0,1)\) converge \(\cC^1_\loc\) to maps \(v_0^\infty\) resp.\ \(v_2^\infty\), and that at least one of the limit maps is nonconstant.
This is in contradiction to the scale-invariant energy converging to 0:
\begin{align*}
0 &< \int_{(-1,1) \times (-1,0]} (v_0^\infty)^*\om_0 + \int_{(-1,1) \times [0,1)} (v_2^\infty)^*\om_2 \\
&= \lim_{\nu \to \infty} \left( \int_{(-1,1)\times[0,1)} (v_0^\nu)^*\om_0 + \int_{(-1,1)\times[0,1)} (v_2^\nu)^*\om_2 \right) \\
&= \lim_{\nu \to \infty} \left(\int_{B_{(-\delta^\nu,\delta^\nu)\times[0,\delta^\nu)}} (w_0^\nu)^*\om_0 + \int_{B_{(-\delta^\nu,\delta^\nu)\times[0,\delta^\nu)}} (w_2^\nu)^*\om_2\right) \;\leq\; \liminf_{\nu\to\infty} E(\ul w^\nu) = 0.
\end{align*}
Hence we have proven the existence of a positive lower bound \(\hbar_8 > 0\).

A similar argument establishes energy quantization for squashed eights.
One difference between the two arguments is that the mean value inequality as stated in the literature requires the boundary to map to an \emph{embedded} Lagrangian, so we cannot deduce uniform gradient convergence to zero.
Hence we consider two cases, depending on whether the limit \(L \coloneqq \lim_{\nu \to \infty} \sup_{\ell \in \{0,1,2\}} \sup | \d w_\ell^\nu| \) (which exists after passing to a subsequence) is finite or infinite. \begin{itemize}
\item \(\mathbf{L \in [0, \infty):}\) Center and rescale as in the proof of \(\hbar_8 > 0\).
To deal with the immersed boundary condition, choose a finite open cover \(L_{01} \times_{M_1} L_{12} = \bigcup_{i=1}^N U_i\) such that \(\pi_{02}: L_{01} \times_{M_1} L_{12} \to M_0^- \times M_2\) restricts to an embedding on each \(U_i\).
Since the rescaled maps have uniformly-bounded gradient, and since their boundary values have smooth lifts to \(L_{01} \times_{M_1} L_{12}\), we can pass to a subsequence and bounded domain to work with embedded boundary conditions in some \(\pi_{02}(U_i)\).
Depending on whether \(L\) is finite or infinite, we can then appeal to either standard bootstrapping techniques (e.g.\ \cite[Theorem 4.1.1]{ms:jh}) or Theorem~\ref{thm:nonfoldedstripshrink} to obtain convergence and hence a contradiction.

\item \(\mathbf{L = \infty:}\) Choose points \((s^\nu,t^\nu)\) and \(\ell_0\) so that \(|\d w_{\ell_0}^\nu(s^\nu,t^\nu)| \to \infty\).
\item As in the proof of Theorem~\ref{thm:rescale}, we can apply the Hofer trick to vary the points \((s^\nu,t^\nu)\) slightly and produce numbers \(R^\nu, \eps^\nu\); rescaling by \(v_\ell^\nu(s,t) \coloneqq w_\ell^\nu(s^\nu + s/R^\nu, t^\nu + t/R^\nu)\) produces a sequence of maps which has nonconstant limit.
\end{itemize}
Finally, $\hbar>0$ follows from the above since we have standard lower bounds $\hbar_{S^2}, \hbar_{D^2}>0$, which can be proven by a single mean value inequality applied to balls resp.\ half balls of large radius, see e.g.\ \cite[Proposition 4.1.4]{ms:jh}.
\end{proof}

\noindent For the proof of Proposition~\ref{prop:weak-remsing}, we will need an extension result.  
To state it we will use the following notation for partitioning the closed unit ball \(\ol B_1(0)\subset\R^2\) into four quadrants:\begin{gather} \label{eq:quadrants}
U_0 \coloneqq \{(x, y) \in \ol B(0,1) \: | \: y \leq x, \: y \leq -x\}, \qquad U_1 \coloneqq \{(x, y) \in \ol B(0,1) \: | \: x \geq y, \: x \geq -y\}, \\
U_2 \coloneqq \{(x, y) \in \ol B(0,1) \: | \: y \geq x, \: y \geq -x\}, \qquad U_3 \coloneqq \{(x, y) \in \ol B(0,1) \: | \: x \leq y, \: x \leq -y \}. \nonumber
\end{gather}
The resulting partition of the boundary circle $\partial \ol B_1(0)$ will be denoted by \(A_i \coloneqq U_i \cap \partial \ol B_1(0)\) for \(i = 0,1,2,3 \), and we denote the intersections of these arcs by  \(p_{i(i+1)} \coloneqq A_i\cap A_{i+1}\) for \(i \;{\rm mod}\; 4\).
We denote the length of a path $\sigma_i:A_i \to X_i$ with respect to $g_i$ by $\ell(\sigma_i)\coloneqq\int_{A_i} |\d \sigma_i| $.

\begin{lemma} \label{lem:extension}
Let \((X_i,g_i)\) be Riemannian manifolds equipped with closed $2$-forms $\omega_i$ for $i=0,1,2$, and let \(Y_{01} \subset X_0 \times X_1\), \(Y_{12} \subset X_1 \times X_2\) be closed submanifolds.
Then for every \(\eps > 0\) there exists \(\delta > 0\) such that the following extension property holds:  Suppose that \(\sigma_i:A_i\to X_i\) for $i=0,1,2,3$ are smooth arcs that satisfy
\begin{gather} \label{eq:conds-for-extension}
 \ell(\sigma_i) \leq \delta, \qquad (\sigma_i(p_{i(i+1)}), \sigma_{i+1}(p_{i(i+1)})) \in Y_{i(i+1)} \qquad \forall \: i \;{\rm mod}\;  4.
\end{gather}
Here we denote \(X_3 \coloneqq X_1\), \(Y_{23} \coloneqq Y_{12}^T\), and \(Y_{30} \coloneqq Y_{01}^T\), with $(\cdot)^T$ denoting the interchange of factors in \(X_i \times X_{i+1}\).
Then there exist smooth extensions \(\wt \sigma_i:U_i\to X\) of \(\wt\sigma_i|_{A_i}=\sigma_i\) such that 
\begin{gather*}
\int_{U_i} \wt\sigma_i^*\omega_i \leq \eps , \qquad
(\wt\sigma_i(p), \wt\sigma_{i+1}(p)) \in Y_{i(i+1)} \quad \forall \: p \in U_i \cap U_{i+1} \qquad \forall \: i \;{\rm mod}\; 4.
\end{gather*}
\end{lemma}

\begin{proof}[Proof of Lemma~\ref{lem:extension}]
Set \(a_i \coloneqq \sigma_i(p_{(i-1)i})\), \(b_i \coloneqq \sigma_i(p_{i(i+1)})\) for \(i \;{\rm mod}\; 4\).
For a constant \(\eps' > 0\) that we will fix later in the proof, let us show that if \(\delta\) is chosen small enough, there exist \(x^\pm = (x_0, x_1^\pm, x_1^\pm, x_2) \in Y_{01} \times_{X_1} Y_{12}\) (two lifts of the same point in $Y_{01}\circ Y_{12}\subset X_0\times X_2$) such that the following distances with respect to the metric \(g_0 \oplus g_1 \oplus g_1 \oplus g_2\) are bounded,
\begin{align} \label{eq:closepoint}
\max\bigl\{ d( (b_0, a_1, b_1, a_2), x^+ )\, ,\,  d( (a_0, b_3, a_3, b_2), x^-) \bigr\} \leq \eps' .
\end{align}
Suppose by contradiction that the sequences \((\sigma_0^\nu,\sigma_1^\nu,\sigma_2^\nu,\sigma_3^\nu)\) satisfy \eqref{eq:conds-for-extension} for a sequence \(\delta^\nu \to 0\) but
\begin{equation}\label{eq:contradicte}
\min_{x^\pm=(x_0, x_1^\pm, x_1^\pm, x_2) \in Y_{01} \times_{X_1} Y_{12}} 
\max\bigl\{ d( (b^\nu_0, a^\nu_1, b^\nu_1, a^\nu_2), x^+ )\, ,\,  d( (a^\nu_0, b^\nu_3, a^\nu_3, b^\nu_2), x^-) \bigr\} > \eps' 
\end{equation}
for all $\nu\in\N$ with \(a_i^\nu \coloneqq \sigma_i^\nu(p_{(i-1)i})\), \(b_i^\nu \coloneqq \sigma_i^\nu(p_{i(i+1)})\).
Since $(b_0^\nu,a_1^\nu)\in Y_{01}, (b_1^\nu,a_2^\nu)\in Y_{12}, (a_3^\nu,b_2^\nu)\in Y_{12}, (a_0^\nu,b_3^\nu)\in Y_{01}$ and \(Y_{01}, Y_{12}\) are compact, we may pass to a subsequence and assume that \(a_i^\nu\), \(b_i^\nu\) have limits \(a_i^\infty, b_i^\infty\) as \(\nu \to \infty\).  
These limits have to coincide \(a_i^\infty = b_i^\infty\) since they are the limits of endpoints of the paths $\sigma_i^\nu$ whose length $\ell(\sigma_i^\nu)\leq \delta^\nu$ goes to zero with $\nu\to\infty$.
This gives rise to two lifts $x^+ \coloneqq (a_0^\infty, a_1^\infty, a_1^\infty, a_2^\infty), x^- \coloneqq (a_0^\infty, a_3^\infty, a_3^\infty, a_2^\infty)\in Y_{01} \times_{X_1} Y_{12}$ since $(a_0^\infty, a_1^\infty) =\lim_{\nu\to\infty}(b_0^\nu, a_1^\nu)$ and $(a_0^\infty, a_3^\infty) =\lim_{\nu\to\infty}(a_0^\nu, b_3^\nu)$ are limits in the closed submanifold $Y_{01}$ and $(a_1^\infty, a_2^\infty) =\lim_{\nu\to\infty}(b_1^\nu, a_2^\nu)$ and $(a_3^\infty, a_2^\infty) =\lim_{\nu\to\infty}(a_3^\nu, b_2^\nu)$ are limits in the closed submanifold $Y_{12}$, and they contradict \eqref{eq:contradicte} since both distances converge to zero, e.g. 
$$
d( (b^\nu_0, a^\nu_1, b^\nu_1, a^\nu_2), x^+ )
\leq d(b^\nu_0,a_0^\infty=b_0^\infty) + d(a^\nu_1,a_1^\infty) + d(b^\nu_1,a_1^\infty=b_1^\infty) + d(a^\nu_2,a_2^\infty)
\;\underset{\nu\to\infty}\longrightarrow\; 0 .
$$
With that we may assume to have lifts \(x^\pm=(x_0, x_1^\pm, x_1^\pm, x_2) \in Y_{01} \times_{X_1} Y_{12} \) satisfying \eqref{eq:closepoint} and begin to construct the extensions \(\wt \sigma_i\) by 
\begin{align*}
\wt\sigma_0(0) \coloneqq x_0, \qquad \wt\sigma_1(0) \coloneqq x_1^+, \qquad \wt \sigma_2(0) \coloneqq x_2, \qquad \wt \sigma_3(0) \coloneqq x_1^-.
\end{align*}
To construct $\wt\sigma_i\times \wt\sigma_{i+1}: U_i \cap U_{i+1} \to Y_{i(i+1)}$, note that the given values on both ends of this line segment are at distance at most $\eps'$ in $Y_{i(i+1)}$.
Hence for sufficiently small \(\eps'\) we may use local charts of the submanifolds \(Y_{01}, Y_{12}\) to choose each extension $\wt\sigma_i :\partial U_i \to X_i$ of $\wt\sigma_i|_{A_i}=\sigma_i$ such that they satisfy the seam conditions $(\wt\sigma_i(p), \wt\sigma_{i+1}(p)) \in Y_{i(i+1)}$ and length bound $\ell(\wt\sigma_i|_{\partial U_i})\leq 2\eps' + \delta$. 
By choosing $\eps'$ and $\delta$ sufficiently small, we can moreover ensure that each of these loops lies in contractible charts of $X_i$.  
On the one hand, that allows us to extend the given $\wt\sigma_i:\partial U_i\to X$ to a smooth map $\wt\sigma_i: U_i \to X_i$.
On the other hand, in each such contractible chart $V\subset X_i$ the given $2$-form $\omega_i|_V = \rd \eta_{V}$ has a uniformly bounded primitive $\eta_V\in\Omega^1(V)$, which gives the desired bound 
$$
\int_{U_i} \wt\sigma_i^*\omega_i \;=\; \int_{\partial U_i} \wt\sigma_i^*\eta_V
\;\leq\; \|\eta_V\|_\infty \ell(\wt\sigma_i|_{\partial U_i})
\;\leq\; \|\eta_V\|_\infty (2\eps' + \delta)\;\leq\;\eps
$$
for sufficiently small $\delta>0$.
In fact, we can cover the projections of the compact Lagrangians to the factors $X_i$ with finitely many contractible charts $V$ so that $\|\eta_V\|_\infty$ is uniformly bounded.
This ensures that the choice of sufficiently small $\delta>0$ for given $\eps>0$ is independent of the arcs~$\sigma_i$.
\end{proof}

\begin{proof}[Proof of Proposition~\ref{prop:weak-remsing}]\label{proof:weak-remsing}
To simplify notation we shift domains so that $w_0$ and $w_2$ in case (i) as well as (ii) are parametrized by $\R\times (-\infty,0]$ and $\R\times[0,\infty)$, respectively.
On these domains we rewrite the figure eight energy integral in polar coordinates 
$\wt w_\ell (r,\theta) = w_\ell (r\cos\theta, r\sin\theta)$
to obtain
\begin{align*}
E& \coloneqq \tint w_0^* \omega_0 + \tint w_1^* \omega_1 + \tint w_2^* \omega_2  \\
& =\int_{\R\times (-\infty,0]} r^{-2} |\partial_\theta \wt w_0 |^2 \d s \d t
\; + \int_{\R\times[-\frac12,\frac12]} |\partial_t w_1 |^2 \d s \d t
\; + \int_{\R\times[0,\infty)} r^{-2} |\partial_\theta \wt w_2|^2 \d s \d t \\
& = \lim_{R\to\infty} \int_0^R r^{-1} A(r) \, \d r
\end{align*}
with integrand
\begin{align*}
A(r) \coloneqq
 \int_\pi^{2\pi}  |\partial_\theta \wt w_0 (r,\theta)|^2 \d\theta
+\int_0^\pi |\partial_\theta \wt w_2(r,\theta)|^2 \d\theta  
+ \int_{-\frac12}^{\frac 12} r |\partial_t w_1 (-r,t) |^2 \d t 
+ \int_{-\frac12}^{\frac 12} r |\partial_t w_1 (r,t) |^2 \d t .
\end{align*}
The same holds for squashed eights if we drop the terms involving $w_1$.
By assumption $\int_0^R r^{-1} A(r) \d r$ converges as $R\to\infty$ although $A(r)\geq 0$ and $\int_0^R r^{-1} \d r \to\infty$ as $R\to\infty$.
Hence there exists a sequence $r_i \to \infty$ such that $A(r_i)\to 0$. 
Depending on a $\delta>0$ to be determined and the $\eps>0$ given, we now choose $r_0>1$ sufficiently large such that $A(r_0)\leq\delta$ and $\bigl| E - \int_0^{r_0} r^{-1} A(r) \d r \bigr| \leq \half\eps$.
Denoting by $B_{r_0}^\pm$ the ball of radius $r_0$ around the origin in the halfplanes $\H^+=\R\times[0,\infty)$ resp.\ $\H^-=\R\times(-\infty,0]$, we now have approximated the energy
\begin{align*}
\biggl| E -  \biggl( \int_{B_{r_0}^-} w_0^* \omega_0 + \int_{[-r_0,r_0]\times[-\frac12, \frac 12]} w_1^* \omega_1 + \int_{B_{r_0}^+} w_2^* \omega_2  \biggr) \biggr| \leq \half\eps 
\end{align*}
and bounded lengths of arcs
\begin{align*}
\ell\bigl( w_1|_{\{\pm r_0\} \times [-\frac12, \frac12]} \bigr) \leq \sqrt{\delta/{r_0}} ,
\qquad
\ell\bigl( \wt w_\ell |_{|z|= r_0} \bigr) \leq \sqrt{\pi \delta} \quad\text{for}\; \ell\in\{0,2\}.
\end{align*}
Here the latter for \(\wt w_0\) (and analogously for \(\wt w_2\) and \(w_1\)) follows from the estimate
\begin{align*}
\ell\bigl( \wt w_0|_{|z|= r_0}\bigr)
\leq \int_\pi^{2\pi}  |\partial_\theta \wt w_0 (r_0,\theta)| \d\theta
\leq \sqrt{\pi A(r_0)} .
\end{align*}
Then for sufficiently large \(r_0>1\) and small \(\delta>0\), the maps \((u_0,\wh u_1,u_2)\) will be constructed as extensions of \((w_0|_{B_{r_0}^-},w_1|_{[-r_0,r_0]\times[-\frac12, \frac 12]},w_2|_{B_{r_0}^+})\).
We first pull them back to the quilted sphere by stereographic projection as in Remark~\ref{rmk:stereographic}, to define a quilted map \((v_0,\wh v_1, v_2)\) on the complement of a neighborhood \(N\subset S^2\) of the puncture \((0,0,1)\). 
Thus $N$ is a slightly-deformed ball with diameter of order \(r_0^{-1}\), which we identify with $\overline{B}_1(0)$ in Lemma~\ref{lem:extension} so that the arcs $\sigma_0=v_0|_{\partial N}$ and $\sigma_2=v_0|_{\partial N}$ are reparametrizations of the short paths \((w_0|_{B_{r_0}^-},w_2|_{B_{r_0}^+})\), and $\sigma_1,\sigma_3$ are the two connected components of $v_1|_{\partial N}$, given by reparametrizations of the short paths $w_1|_{[-r_0,r_0]\times[-\frac12, \frac 12]}$.
For sufficiently small $\delta>0$, Lemma~\ref{lem:extension} then provides a smooth extension of \((v_0,\wh v_1,v_2)|_{\partial N}\) by a quilted map \((\wt\sigma_0,\wt\sigma_1,\wt\sigma_3,\wt\sigma_2)\) on $N$ with total symplectic area bounded by \(\frac12 \epsilon\).
After smoothing these extensions near $\partial N$, we finally construct \((u_0,\wh u_1,u_2)\) by pullback of these extended maps.
More precisely, we construct $u_0$ (and similarly $u_2$) by precomposition of $v_0,\wt\sigma_0$ with a smooth bijection from $D^2$ to ${\rm dom}(v_0)\cup U_0$ which maps $1\in\partial D^2$ to the corner of $U_0$.
(This is not possible by a diffeomorphism, but there is a smooth map with vanishing derivatives at $1$.)
To construct \(\wh u_1:[0,2\pi]\times [-\frac12,\frac12]\to M_1\) we pull back $v_1,\wt\sigma_1,\wt\sigma_3$ by a smooth map from $[0,2\pi]\times [-\frac12,\frac12]$ to ${\rm dom}(u_1)\cup U_1\cup U_3$ which on the boundary components $[0,2\pi]\times \{\pm\frac12\}$ (in polar coordinates) coincides with the bijections from $\partial D^2$ to $U_0\cap (U_1\cup U_3)$ resp.\ $U_2\cap (U_1\cup U_3)$ used in the construction of $u_0,u_2$, thus guaranteeing the seam conditions. 
It can moreover be chosen as bijection with the exception of mapping the two edges $\{0,2\pi\}\times [-\frac12,\frac 12]$ to the common corner point $U_1\cap U_3$. 
Smoothness of these maps guarantees smoothness of the pullbacks \((u_0,\wh u_1,u_2)\), and bijectivity on the complement of a zero set guarantees that they have the same symplectic area as the extension of \((v_0,\wh v_1,v_2)|_{\partial N}\).
Finally, $\wh u_1$ by construction is constant equal to $\wt\sigma_1(0)$ on $\{0\}\times [-\frac12,\frac 12]$ and equal to $\wt\sigma_3(0)$  on $\{2\pi\}\times [-\frac12,\frac 12]$, and extends smoothly to an annulus if $\wt\sigma_1(0)=\wt\sigma_3(0)$. 
The latter is guaranteed by the seam conditions on the extensions $\wt\sigma_i$ if \(\pi_{02} : L_{01} \times_{X_1} L_{12} \to L_{01}\circ L_{12}\) is injective.
\end{proof}

\begin{proof}[Proof of Lemma~\ref{lem:rescaled-width}]\label{proof:rescaled-width}
The functions \(\wt f^\nu(s) - \wt f^\nu(0)\) resp.\ \(\wt f^\nu(s) / \wt f^\nu(0)\) are equal to 0 resp.\ 1 at \(s = 0\), so it suffices to show that \((\wt f^\nu(s) - \wt f^\nu(0))^{(k)}\) and \((\wt f^\nu(s) / \wt f^\nu(0))^{(k)}\) converge \(\cC^0_\loc\) to $0$ for every \(k \geq 1\).
This convergence follows from the formulas
\begin{align*}
(\wt f^\nu - \wt f^\nu(0))^{(k)}(s) = (\alpha^\nu)^{1 - k} (f^\nu)^{(k)}(s^\nu + \tfrac s {\alpha^\nu}), \qquad 
(\wt f^\nu / \wt f^\nu(0))^{(k)}(s) = \frac {(\alpha^\nu)^{-k} (f^\nu)^{(k)}(s^\nu + \tfrac s {\alpha^\nu})} {f^\nu(s^\nu)},
\end{align*}
the convergence \(s^\nu \to s^\infty\) and \(\alpha^\nu \to \infty\), and the obedient shrinking \(f^\nu \Rightarrow 0\) in Definition~\ref{def:obedience}.

The domain of $\phi^\nu$ is \([-\alpha^\nu(s^\nu + \rho), -\alpha^\nu(s^\nu - \rho)]\times \R\) which contains $B_R(0)$ for sufficiently large $\nu$ since $s^\nu\to s^\infty\in (-\rho,\rho)$, $\alpha^\nu\to\infty$, and \(f^\nu \sr{\cC^\infty}{\longrightarrow} 0\).
The image concentrates at $(s^\nu,0)$, more precisely we have 
\begin{align*}
\phi^\nu(B_R(0)) \subset B_{R\delta^\nu}(s^\nu,0) , \qquad \delta^\nu\coloneqq\max\bigl\{ (\alpha^\nu)^{-1}, 2 \|f^\nu\|_{\cC^0} \bigr\} \to 0 ,
\end{align*}
since $\bigl| \phi^\nu(s,t) - (s^\nu, 0) \bigr|^2 \leq s^2 / (\alpha^\nu)^2 +  4 \|f^\nu\|_{\cC^0}^2 t^2
\leq  \bigl( \delta^\nu |(s,t)|\bigr)^2$.
Next, we claim that $B_{R\delta^\nu}(s^\nu,0)$ lies in the image of $\psi^\nu$ for sufficiently large $\nu$. 
Indeed, given $y\in B_{R\delta^\nu}(s^\nu,0)$, we can solve 
\begin{align} \label{eq:banach}
y= \psi^\nu(z)
\end{align}
iff there is a solution 
$z \in -\alpha^\nu s^\nu + [-\alpha^\nu\rho,\alpha^\nu\rho]^2$ of \begin{align*}
z = \alpha^\nu\Bigl( y - s^\nu +  i F^\nu\bigl( s^\nu + z/\alpha^\nu \bigr)\Bigr)\eqqcolon H(z).
\end{align*}
The existence of a such a solution follows from Banach's fixed point theorem applied to \(H\).
Indeed, \(H\) is a smooth map from \(-\alpha^\nu s^\nu + [-\alpha^\nu \rho, \alpha^\nu\rho]^2\) to itself since \(y \in B_{R\delta^\nu}(s^\nu,0)\) gives 
\begin{align*}
\bigl| H(z) + \alpha^\nu s^\nu \bigr| &= \bigl|\alpha^\nu(y + i F^\nu(s^\nu + \tfrac z {\alpha^\nu}))\bigr| 
\leq \alpha^\nu ( | s^\nu | + R\delta^\nu + \|F^\nu\|_{\cC^0})
< \alpha^\nu \rho
\end{align*}
for $\nu$ sufficiently large so that \(R\delta^\nu + \|F^\nu\|_{\cC^0} < \rho - |s^\nu| \).
The latter holds for large $\nu$ since the left hand side converges to $0$ while $\rho - |s^\nu| \to \rho - |s^\infty| >0$.
Furthermore, \(H\) is a contraction mapping once $\nu$ is large enough so that $\|F^\nu\|_{\cC^1}<1$,
\begin{align*}
\bigl|H(z) - H(w)\bigr| &= \alpha^\nu \bigl| F^\nu\bigl( s^\nu + z/\alpha^\nu \bigr) - F^\nu\bigl( s^\nu + w/\alpha^\nu \bigr) \bigr| \leq \|F^\nu\|_{\cC^1}|z - w|.
\end{align*}
Therefore Banach's fixed point theorem guarantees a (unique) solution \(z \in -\alpha^\nu s^\nu + [-\alpha^\nu\rho,\alpha^\nu\rho]^2\) of \eqref{eq:banach}, which shows that for \(\nu\gg1\), the image of \(\psi^\nu\) contains \(\phi^\nu(B_R(0))\).
To show that \((\psi^\nu)^{-1} \circ \phi^\nu\) is a well-defined element of \(\cC^\infty(B_R(0), \R^2)\), it remains to show that \(\psi^\nu\) is injective and has a Jacobian with nonvanishing determinant.  

Injectivity again holds once $\|F^\nu\|_{\cC^1}<1$ since 
\begin{align*}
\psi^\nu(z) = \psi^\nu(w) &\iff z - w = i\alpha^\nu\Bigl(F\bigl(s^\nu + \tfrac z {\alpha^\nu}\bigr) - F\bigl(s^\nu + \tfrac w {\alpha^\nu}\bigr)\Bigr) \\
&\implies |z - w| \leq \|F^\nu\|_{\cC^1}|z - w|.
\end{align*}
The Jacobian of \(\psi^\nu\) is given by \begin{align} \label{eq:psijacobian}
\fJ^\nu(s,t) &\coloneqq \Jac(\psi^\nu)(s+it) = (\alpha^\nu)^{-1}\left(\begin{array}{ll}
1 + \partial_s\im F^\nu(s^\nu + \tfrac{s+it}{\alpha^\nu}) & \partial_t\im F^\nu(s^\nu + \tfrac{s+it}{\alpha^\nu}) \\
-\partial_s\re F^\nu(s^\nu + \tfrac{s+it}{\alpha^\nu}) & 1 - \partial_t\re F^\nu(s^\nu + \tfrac{s+it}{\alpha^\nu})
\end{array}\right),
\end{align}
which for \(\nu\gg1\) has nonvanishing determinant since \(F^\nu \sr{\cC^\infty}{\lra} 0\). 
This proves that \((\psi^\nu)^{-1} \circ \phi^\nu\) is a well-defined element of \(\cC^\infty(B_R(0), \R^2)\) for \(\nu\gg1\).

We now restrict to the case \(\alpha^\nu \coloneqq (2f^\nu(s^\nu))^{-1}\).
To establish the \(\cC^\infty_\loc(\R^2,\R^2)\)-convergence of \((\psi^\nu)^{-1} \circ \phi^\nu\) to the map \((s,t) \mapsto (s,t+\tfrac 1 2)\), we begin by noting their equality at $(s,t)=(0,-\tfrac 1 2)$, 
\begin{align*}
\bigl((\psi^\nu)^{-1}\circ \phi^\nu\bigr)(0,-\tfrac 1 2) = (\psi^\nu)^{-1}(s^\nu,-f^\nu(s^\nu)) = (0,0).
\end{align*}
It remains to show \(\cC^\infty_\loc\)-convergence of the Jacobians
\(\Jac\bigl((\psi^\nu)^{-1} \circ \phi^\nu\bigr) \to \Id\).
Using the inverse of \eqref{eq:psijacobian} and abbreviating $Q^\nu(s,t) \coloneqq s^\nu + \bigl((\psi^\nu)^{-1}\circ \phi^\nu\bigr)(s,t) /\alpha^\nu$ we have
\begin{align}
& \Jac\bigl((\psi^\nu)^{-1}\circ \phi^\nu\bigr)(s,t) 
= 
\Bigl(\fJ^\nu\bigl((\psi^\nu)^{-1}(\phi^\nu(s,t))\bigr)\Bigr)^{-1} \cdot\Jac(\phi^\nu)(s,t) \nonumber \\
&\hspace{0.35in} = 
\frac {\left(\begin{array}{ll}
1 - \partial_t\re F^\nu\circ Q^\nu & -\partial_t\im F^\nu\circ Q^\nu \\
\partial_s\re F^\nu\circ Q^\nu & 1 + \partial_s\im F^\nu\circ Q^\nu
\end{array}\right)\left(\begin{array}{ll}
1 & 0 \\
2(f^\nu)'(s^\nu + s/\alpha^\nu)t & 2\alpha^\nu f^\nu(s^\nu + s/\alpha^\nu)
\end{array}\right)} {(1 + \partial_s\im F^\nu\circ Q^\nu)(1 - \partial_t\re F^\nu\circ Q^\nu) + \partial_t\im F^\nu\circ Q^\nu\partial_s\re F^\nu\circ Q^\nu}. 
\label{eq:compjacobian}
\end{align}
The \(\cC^\infty\)-convergence \(F^\nu \to 0\) implies that the first matrix divided by the denominator converges \(\cC^0_\loc\) to the identity.
In fact, this is \(\cC^k_\loc\)-convergence if the derivatives of $Q^\nu$ up to order $k$ are uniformly bounded on compact sets.
In the second matrix we have $2(f^\nu)'(s^\nu + s/\alpha^\nu)t \to 0$ in \(\cC^\infty_\loc\) by the \(\cC^\infty\)-convergence \(f^\nu\to0\) and $(\alpha^\nu)^{-1}\to 0$, and the bottom right entry $\tfrac {f^\nu(s^\nu + 2f^\nu(s^\nu)s)} {f^\nu(s^\nu)} =\tfrac {\wt f^\nu(s)} {\wt f^\nu(0)}\to 1$ converges already in \(\cC^\infty_\loc\) by the first statement of the current lemma. 

This proves \(\cC^0_\loc\) convergence of the Jacobians and thus \(\cC^1_\loc\)-convergence of the maps \((\psi^\nu)^{-1}\circ\phi^\nu\).
Since $Q^\nu$ is given in terms of these maps and $(\alpha^\nu)^{-1}\to 0$, we conclude that its derivatives are uniformly bounded on compact sets, thus the convergence of the Jacobians is in \(\cC^1_\loc\), which implies \(\cC^2_\loc\)-convergence of the maps \((\psi^\nu)^{-1}\circ\phi^\nu\).
Iterating this argument proves the claimed \(\cC^\infty_\loc\) convergence.
\end{proof}

\section{Examples of figure eight bubbles (in collaboration with Felix Schm\"{a}schke)} \label{app:ex}

In this section we provide some examples of figure eight bubbles.
Our first, previously known, examples show that classical holomorphic discs and holomorphic strips give rise to figure eight bubbles, which naturally appear in the strip shrinking limit of Theorem~\ref{thm:rescale} in case $M_1$ or both of $M_0, M_2$ are points.
Of course, in that case, strip shrinking is not needed to identify the respective moduli spaces.

\begin{example} \label{ex:mickeymouse}
Let $M_0$ and \(M_2\) each be a point, let $M_1$ be any symplectic manifold that is either compact or satisfies the boundedness assumptions of Remark~\ref{rmk:noncompact}, and let $L, L' \subset M_1$ be any two compact Lagrangian submanifolds.
Then a $J_1$-holomorphic strip $u_1\colon [-1,1] \times \R \to M_1$ with boundary conditions
\begin{equation*}
u_1(-1,t) \in L,\qquad  u_1(1,t) \in L'\; \qquad \forall t\in \R
\end{equation*}
gives rise to a figure eight bubble in the sense of Definition~\ref{def:8} by setting $u_0 \coloneqq \const \eqqcolon u_2$.
Such bubbles are generally sheet-switching, unless $u_1$ is a self-connecting Floer trajectory.

Here the correspondences \(\{\pt\} \times L\) and \(L' \times \{\pt\}\) have immersed composition if and only if the Lagrangians intersect transversely in $M_1$.
This bubble type in fact appears in the strip shrinking that relates the quilted Floer trajectories for $\bigl(\{\pt\} ,  \{\pt\} \times L ,  L' \times \{\pt\} , \{\pt\}\bigr)$ and $\bigl(\{\pt\} ,  (\{\pt\} \times L)\circ  (L' \times \{\pt\}) , \{\pt\}\bigr)$.
The first are easily identified with the Floer strips for $(L,L')$.
The latter are pairs of strips in $M_0$ and $M_2$, hence there only is one constant trajectory.
All nontrivial Floer trajectories result in a single figure eight bubble on this constant trajectory.
This demonstrates that figure eight bubbling must be reckoned with, even when only considering isolated Floer trajectories.
\end{example}

All following next figure eight bubbles will be constructed as tuples of maps from the following Riemann surfaces with boundary:
\begin{gather*}
  \Sigma_0 = \{ w \in \C \ | \ \nm{w+1}\leq 1\} \less \{0\}, \qquad \Sigma_2 = \{w \in \C \ | \ \nm{w-1} \leq 1\} \less \{0\}, \\
  \Sigma_1 = \{ w \in \C \ | \ \nm{w+1} \geq 1 , \nm{w-1}
  \geq 1\} \less \{0\}.
\end{gather*}
Each of these surfaces is equipped with the complex structure induced by the inclusion into $\C$, and we will ensure that the maps on $\Si_1$ extend smoothly to $\infty\in\CP^1 \cong \C\cup\{\infty\}$.
The coincidence of boundary components in $\C$ then induces seams between the surfaces and thus defines a quilted surface with total space $\CP^1 \less\{0\}$.
It can be identified, by a biholomorphism $\CP^1 \less\{0\}\cong\C\less\{\infty\}$, with the quilted surface underlying the figure eight bubbles in Definition~\ref{def:8}.

\begin{example} \label{ex:mickeymouse2}
Let $M_1$ be a point, let $M_0, M_2$ be any two symplectic manifolds that are either compact or satisfy our boundedness assumptions, and let $L \subset M_0$ and $L' \subset M_2$ be any two compact Lagrangian submanifolds.
Given two punctured holomorphic discs $u_0\colon \Sigma_0 \to M_0$, \(u_2\colon \Sigma_2 \to M_2$ with \(u_0(\partial \Sigma_0) \subset L\), \(u_2(\partial \Sigma_2) \subset L'\), we obtain a figure eight by setting $u_1 \coloneqq \const$.  

Here the correspondences \(L \times \{\pt\}\) and \(\{\pt\} \times L'\) always have embedded composition $L\times L'\subset M_0\times M_2$.
The singularity in these figure eight bubble is already removed by our construction, and for other bubbles of this type can be removed by the standard result for punctured disks, yielding a pair of disk bubbles on $L$ and $L'$.
These could occur in the strip shrinking that, for further Lagrangians $L_0\subset M_0$ and $L_2\subset M_2$, relates the quilted Floer trajectories for $\bigl(L_0, L \times \{\pt\} ,  \{\pt\} \times L' , L_2 \bigr)$ and $\bigl(L_0, L \times L' , L_2 \bigr)$. 
However, these moduli spaces also have an elementary identification, so this type of figure eight bubbling just is an expression of the fact that the moduli space has boundary components where disk bubbles appear at the same $\R$ coordinate on different seams.
Actual boundary, rather than corners, correspond to one of the disk bubbles being constant.
\end{example}

For the final, more nontrivial example, $\CP^n$ will denote the complex projective space equipped with its standard complex structure and with K\"ahler form $\om_{\CP^n}$ associated to the Fubini-Study metric.

\begin{example}
Consider the $S^1$-action on $\CP^3$ given by $u * [z_0:z_1:z_2:z_3]
\coloneqq [uz_0:uz_1:u^{-1}z_2:u^{-1}z_3]$ for any $u \in \{z\in\C : |z|=1\} \cong S^1 $.
This is a Hamiltonian $S^1$-action with Hamiltionian
\[
\mu([z_0:z_1:z_2:z_3]) \coloneqq \frac 12\frac{ \nm{z_0}^2 + \nm{z_1}^2 -
  \nm{z_2}^2 - \nm{z_3}^2}{\nm{z_0}^2 + \nm{z_1}^2 + \nm{z_2}^2 +
  \nm{z_3}^2}\;.
\]
Symplectic reduction at regular values generally gives rise to Lagrangian correspondences, see \cite[Example~2.0.2(e)]{quiltfloer}.
In this case, reduction at \(0\) yields a Lagrangian correspondence between $M_0=M_2\coloneqq\CP^3$ and $M_1\coloneqq\CP^1 \times \CP^1$, equipped with the symplectic structure $\om \coloneqq \om_{\CP^1} \oplus \om_{\CP^1}$. 
Indeed, the level set of the moment map is 
\begin{equation*}
\mu^{-1}(0) = \left\{ [z_0:z_1:z_2:z_3]\in \CP^3 \    \left| \ \nm{z_0}^2 + \nm{z_1}^2 = \nm{z_2}^2 +
      \nm{z_3}^2\right\}\right. 
\end{equation*}
and the quotient map $\pi\colon \mu^{-1}(0) \to M_0 \qu S^1 \cong \CP^1 \times \CP^1$ is given by \([z_0:z_1:z_2:z_3] \mapsto \left([z_0:z_1],[z_2:z_3]\right)\).
With the inclusion $\i \colon \mu^{-1}(0)\to \CP^3$, this gives rise to Lagrangian submanifolds $L_{01}\coloneqq(\i\times\pi)(\mu^{-1}(0))\subset M_0^- \times M_1$ and $L_{12}\coloneqq(\pi\times\i)(\mu^{-1}(0))\subset M_1^- \times M_2$.
Both are diffeomorphic to \(S^3 \times S^2\), hence simply connected; therefore \(L_{01}\) and \(L_{12}\) are monotone, with the same monotonicity constant as \(M_0^-\times M_1\) resp.\ \(M_1^-\times M_2\).
The geometric composition is 
$$
L_{01}\circ L_{12} \;=\; \left\{  ( Z_0 , Z_2 ) \in \mu^{-1}(0) \times \mu^{-1}(0)  \    \left| \  \pi(Z_0)=\pi(Z_2) \right\}\right. \; \subset\; \CP^3 \times \CP^3  ,
$$
and is embedded since $\pi$ is a surjection and determines $\pi(Z_0)=\pi(Z_2)\in M_1$ uniquely.

Now a general idea for constructing figure eight bubbles applies to this case.
The holomorphic map $\C\to \C^4, w\mapsto (w-1,w+1,w^2-1,1)$ induces holomorphic maps to both $\CP^1\times \CP^1$ and $\CP^3$, and on the seams $\{ |w\pm 1| = 1 \}\subset \C$ the latter takes values in $\mu^{-1}(0)$.
Hence the following triple $(u_0,u_1,u_2)$ defines a nonconstant figure eight bubble, 
\begin{align*}
  u_0&\colon\Sigma_0 \to \CP^3 , &&\!\!\!\!\!\!\!\!\!\!\!\!\!\!\!\!\!\!\!\!\!\!\!\!\!\!\!\!\!\!\!\!\!\!\!\!\!\!\!\!\!\!\!\!\!\!\!\!
  w\mapsto \; \, [w-1:w+1:w^2-1:1],\\
  u_1&\colon\Sigma_1 \to \CP^1\times \CP^1,&&\!\!\!\!\!\!\!\!\!\!\!\!\!\!\!\!\!\!\!\!\!\!\!\!\!\!\!\!\!\!\!\!\!\!\!\!\!\!\!\!\!\!\!\!\!\!\!\!
  w\mapsto   \big([w-1:w+1],[w^2-1:1]\big),\\
  u_2&\colon\Sigma_2 \to \CP^3,&&\!\!\!\!\!\!\!\!\!\!\!\!\!\!\!\!\!\!\!\!\!\!\!\!\!\!\!\!\!\!\!\!\!\!\!\!\!\!\!\!\!\!\!\!\!\!\!\!
  w \mapsto \;\, [w-1:w+1:w^2-1:1] .
\end{align*}
Note here that $u_1$ extends continuously to $\infty\in\CP^1$ since $[w-1:w+1]\to [1:1]$ and
$[w^2-1:1] \to [1:0]$ as $w\to\infty$.
Moreover, all maps extend smoothly to $0\in\C$ -- an example of a removable singularity.

This figure eight bubble could occur in the strip shrinking that, for further Lagrangians $L_0, L_2\subset \CP^3$ relates the quilted Floer trajectories for $\bigl(L_0, L_{01} , L_{12} , L_2 \bigr)$ and $\bigl(L_0, L_{01}\circ L_{12} , L_2 \bigr)$.
In particular, if both $L_0,L_2$ are monotone and so-called ``transverse lifts'' of Lagrangians $\ell_0,\ell_2 \subset \CP^1\times\CP^1$, i.e.\ $L_i \pitchfork \mu^{-1}(0)$ and $\pi\colon L_i \cap \mu^{-1}(0) \to \ell_i$ is bijective, then the above figure eight bubble could be an obstruction to the identification of the Floer homologies $HF(\ell_0,\ell_2)\cong HF(L_0,  L_{01} , L_{12} , L_2 )$ in $\CP^1\times\CP^1$ and $HF(L_0\times L_2, L_{01}\circ L_{12})$ in $(\CP^3)^-\times\CP^3$.
\end{example}


\begin{thebibliography}{10}

\bibitem[AbSc]{ab:jumping}
A.~Abbondandolo, M.~Schwarz.
\newblock Floer homology of cotangent bundles and the loop product.
\newblock {\em  Geom.\ Topol.}, 14 (2010), no.\ 2, 1569--1722.

\bibitem[AbSe]{as:wrapped}
M.~Abouzaid, P.~Seidel.
\newblock An open string analogue of Viterbo functoriality.
\newblock {\em Geom.\ Topol.}, 14 (2010), no.\ 2, 627--718.

\bibitem[Ak]{a:immersed}
M.~Akaho.
\newblock Intersection theory for Lagrangian immersions.
\newblock Math.\ Res.\ Lett.\ 12(4):543--550, 2005.

\bibitem[AkJo]{aj}
M.~Akaho, D.~Joyce.
\newblock Immersed Lagrangian Floer theory.
\newblock J.\ Differential Geom., 86(3):381--500, 2010.

\bibitem[Au]{a:intro}
D.~Auroux.
\newblock A beginner's introduction to Fukaya categories.
\newblock arXiv:1301.7056.

\bibitem[BLM]{pretriangulated}
Yu.~Bespalov, V.~Lyubashenko, O.~Manzyuk.
\newblock Pretriangulated \(A_\infty\)-categories.
\newblock {\it Nats\={i}onal'na Akadem\={i}ya Nauk Ukraini, Kiev}, 2008.  599 pp.

\bibitem[Bo1]{b:2ass}
N.~Bottman.
\newblock 2-associahedra.
\newblock arXiv:1709.00119.

\bibitem[Bo2]{b:realize}
N.~Bottman.
\newblock Moduli spaces of witch curves topologically realize the 2-associahedra.
\newblock arXiv:1712.01194.

\bibitem[Bo3]{b:2cat}
N.~Bottman.
\newblock ($A_\infty$,2)-spaces and ($A_\infty$,2)-categories.
\newblock In preparation.

\bibitem[Bo4]{b:singularity}
N.~Bottman.
\newblock Pseudoholomorphic quilts with figure eight singularity.
\newblock arXiv:1410.3834.

\bibitem[Bo5]{b:thesis}
N.~Bottman.
\newblock Pseudoholomorphic quilts with figure eight singularity.
\newblock Thesis, MIT, 2015.


\bibitem[BoWe]{bw:bigkahuna}
N.~Bottman, K.~Wehrheim.
\newblock Geometric composition as an \(A_\infty\)-bifunctor.
\newblock In preparation.

\bibitem[Ch]{chan}
K.Y.K.~Chan.
\newblock Moduli spaces of pseudo-holomorphic disks and Floer theory of cleanly intersecting Lagrangians.
\newblock Ph.D. thesis, Stanford University, 2012.

\bibitem[CEL]{cel:switch}
K.~Cieliebak, T.~Ekholm, J.~Latschev.
\newblock Compactness for holomorphic curves with switching Lagrangian boundary conditions.
\newblock {\em J.~Symplectic Geom.}, 8(3);267--298, 2010.

\bibitem[CoLa]{CL} O.Cornea, F.Lalonde.
\newblock Cluster homology: an overview of the construction and results.
\newblock {\it Electron.Res.Announc.Amer.Math.Soc.} 12:1--12, 2006.

\bibitem[F]{frauenfelder}
U.~Frauenfelder. 
\newblock
Gromov convergence of pseudoholomorphic disks.
\newblock 
{Fixed Point Theory Appl.}, 3:215--271, 2008.

\bibitem[FFGW]{theguide} 
O.~Fabert, J.~Fish, R.~Golovko, K.~Wehrheim.
\newblock Polyfolds: A first and second look. 
\newblock  {\em EMS Surveys in Mathematical Sciences} 3(2): 131--208, 2016.

\bibitem[GuSt]{gu:rev}
V.~Guillemin, S.~Sternberg.
\newblock The moment map revisited.
\newblock {\em J.~Differential Geom.}, 69(1):137--162, 2005.

\bibitem[Ho]{Hofer}
H.~Hofer, 
\newblock {\em Polyfolds and a general Fredholm theory.} 
\newblock Proceedings of the 2008 Clay Research Conference. 
\newblock arXiv:0809.3753.

\bibitem[HWZ1]{hwz:gw}
H.~Hofer, K.~Wysocki, E.~Zehnder.
\newblock Applications of polyfold theory I: the polyfolds of Gromov--Witten theory.
\newblock arXiv:1107.2097.

\bibitem[HWZ2]{hwz:fred2}
H.~Hofer, K.~Wysocki, E.~Zehnder.
\newblock A general Fredholm theory II: Implicit function theorems.
\newblock {\em Geom. Funct. Analysis}, 19:206--293, 2009.

\bibitem[LeLi]{ll}
Y.~Lekili, M.~Lipyanskiy.
\newblock Geometric composition in quilted Floer theory.
\newblock Adv.\ Math., 244:268--302, 2013.

\bibitem[LiWe]{lw:morsetrees}
J.~Li, K.~Wehrheim.
\newblock \(A_\infty\)-structures from Morse trees with pseudoholomorphic disks.
\newblock
In preparation.

\bibitem[MaWo]{mw}
S.~Ma'u, C.~Woodward.
\newblock Geometric realizations of the multiplihedra.
\newblock {\em Compos. Math.}, 146(4):1002--1028, 2010.

\bibitem[MWW]{mww}
S.~Mau, K.~Wehrheim, C.~Woodward.
\newblock \(A_\infty\) functors for Lagrangian correspondences.
\newblock arXiv:1601.04919.

\bibitem[McSa]{ms:jh}
D.~McDuff, D.A.~Salamon.
\newblock {\em {$J$}-holomorphic curves and symplectic topology}, volume~52 of
  {\em American Mathematical Society Colloquium Publications}.
\newblock American Mathematical Society, Providence, RI, 2004.

\bibitem[Se1]{se:bo}
P.~Seidel.
\newblock {\em Fukaya categories and {P}icard--{L}efschetz theory}.
\newblock Zurich Lectures in Advanced Mathematics. European Mathematical
  Society (EMS), Z\"urich, 2008.

\bibitem[Se2]{se:genus2}
P.~Seidel.
\newblock Homological mirror symmetry for the genus two curve.
\newblock J.\ Algebraic Geom., 20(4):727--769, 2011.
 
\bibitem[We1]{w:quant}
K.~Wehrheim.
\newblock Energy quantization and mean value inequalities for nonlinear boundary value problems.
\newblock {\em J.\ Eur.\ Math.\ Soc.\ (JEMS)}, 7(3):305--318, 2005.

\bibitem[We2]{w:chekanov}
K.~Wehrheim.
\newblock Floer cohomology and multiply covered composition of Lagrangian correspondences.
\newblock Notes, \url{math.berkeley.edu/~katrin}.

\bibitem[We3]{w:morse} K.~Wehrheim.
\newblock Smooth structures on Morse trajectory spaces, featuring finite ends and associative gluing.
\newblock {Geometry \& Topology Monographs}, 18:369--450, 2012.

\bibitem[WW1]{isom}
K.~Wehrheim, C.T.~Woodward.
\newblock Floer cohomology and geometric composition of Lagrangian
  correspondences.
\newblock {\em Adv.\ Math.}, 230(1):177--228, 2012.

\bibitem[WW2]{quilts}
K.~Wehrheim, C.T.~Woodward.
\newblock Pseudoholomorphic quilts.
\newblock {\em J.~Symp.~Geom.}, 13: 745--764, 2015.

\bibitem[WW3]{quiltfloer}
K.~Wehrheim, C.T.~Woodward.
\newblock Quilted Floer cohomology.
\newblock {\em Geometry \& Topology} 14(2):833--902, 2010.

\bibitem[WW4]{striptrans}
K.~Wehrheim, C.T.~Woodward.
\newblock Quilted Floer trajectories with constant components. 
\newblock {\em Geometry \& Topology} 16(1):127--154, 2012.

\bibitem[WW5]{ww:cat}
K.~Wehrheim and C.~Woodward.
\newblock Functoriality for {L}agrangian correspondences in {F}loer theory.
\newblock {\em Quantum Topology}, 1:129--170, 2010.

\end{thebibliography}
\end{document}